 \newtheorem{thm}{Theorem}[section]
 \newtheorem{prop}[thm]{Proposition}
 \newtheorem{cor}[thm]{Corollary}
 \newtheorem{lem}[thm]{Lemma}
 \newtheorem{conj}[thm]{Conjecture}
\theoremstyle{definition}
\theoremstyle{remark}
\newtheorem{rem}[thm]{Remark}
\renewcommand{\L}{\ifmmode {\mathcal{L}}\else$\mathcal{L}$\ \fi}
\newcommand{\bbC}{\ifmmode {\mathbb{C}}\else$\mathbb{C}$\ \fi}
\newcommand{\bbR}{\ifmmode {\mathbb{R}}\else$\mathbb{R}$\ \fi}
\newcommand{\be}{\begin{equation}}
\newcommand{\ee}{\end{equation}}
\newcommand{\fpbar}{\ifmmode {\overline{\mathbb{F}_p}}\else$\mathbb{F}_p$\ \fi}
\newcommand{\fp}{\ifmmode {\mathbb{F}_p}\else$\mathbb{F}_p$\ \fi}
\newcommand{\zp}{\ifmmode {\mathbb{Z}_p}\else$\mathbb{Z}_p$\ \fi}
\newcommand{\z}{\mathbb{Z}}
\newcommand{\Z}{\mathbb{Z}}
\newcommand{\zpMod}{\ifmmode\mbox{$\zp$-Mod}\else$\zp$-Mod \fi}
\newcommand{\Mod}{\ifmmode\mbox{$\Lambda$-Mod}\else$\Lambda$-Mod \fi}
\renewcommand{\mod}{\ifmmode\mbox{$\Lambda$-mod}\else$\Lambda$-mod
\fi}
\newcommand{\La}{\ifmmode\Lambda\else$\Lambda$\fi}
\newcommand{\Hom}{{\mathrm{Hom}}}
\renewcommand{\H}{\mathrm{H}}
\newcommand{\M}{\ifmmode {\frak M}\else${\frak M}$ \fi}
\newcommand{\m}{\ifmmode {\frak m}\else$\frak m$ \fi}
\newcommand{\mh}{\ifmmode {\frak m}(H)\else${\frak m}(H)$ \fi}
\newcommand{\p}{\ifmmode {\frak p}\else${\frak p}$\ \fi}
\renewcommand{\P}{\ifmmode {\frak P}\else${\frak P}$\ \fi}
\newcommand{\Zp}{\mbox{${\Bbb Z}_p$}}
\newcommand{\e}{\ifmmode {\mathcal{E}}\else$\mathcal{E}$ \fi}
\renewcommand{\O}{\mathcal{ O}}
\newcommand{\G}{\ifmmode {\mathcal{G}}\else${\mathcal{G}}$\ \fi}
\newcommand{\A}{\ifmmode {\mathcal{A}}\else${\mathcal{ A}}$\ \fi}
\renewcommand{\projlim}[1] {{\lim\limits_{\stackrel{\displaystyle
\longleftarrow}{#1}}}}
\newcommand{\dirlim}[1]
{{\lim\limits_{\stackrel{\displaystyle \longrightarrow}{#1}}}}
\newcommand{\kl}{[\![}
\newcommand{\kr}{]\!]}
\newcommand{\Qp}{\ifmmode {{\Bbb Q}_p}\else${\Bbb Q}_p$\ \fi}
\newcommand{\qp}{\ifmmode {{\Bbb Q}_p}\else${\Bbb Q}_p$\ \fi}
\newcommand{\ql}{\ifmmode {{\Bbb Q}_l}\else${\Bbb Q}_l$\ \fi}
\newcommand{\Q}{\ifmmode {\Bbb Q}\else${\Bbb Q}$\ \fi}
\newcommand{\q}{\ifmmode {\Bbb Q}\else${\Bbb Q}$\ \fi}
\newcommand{\Coind}{\mbox{\mbox{\rm Coind}}}
\newcommand{\Ind}{\mathrm{Ind}}
\newcommand{\Co}{\mbox{$\Bbb C$}}
\def\sectionnam{\@empty}
\def\subsectionnam{\@empty}
\begin{document}

\title[Noncommutative MC for CM-elliptic curves]{On the non-commutative Main Conjecture for elliptic curves with complex multiplication}%

\author{Thanasis Bouganis}
\address{Universit\"{a}t Heidelberg\\ Mathematisches Institut\\
Im Neuenheimer Feld 288\\ 69120 Heidelberg, Germany.}
\email{bouganis@mathi.uni-heidelberg.de}

\author{Otmar Venjakob}%
\address{Universit\"{a}t Heidelberg\\ Mathematisches Institut\\
Im Neuenheimer Feld 288\\ 69120 Heidelberg, Germany.}
\email{venjakob@mathi.uni-heidelberg.de}
\urladdr{http://www.mathi.uni-heidelberg.de/\textasciitilde otmar/}
\thanks{We acknowledge  support by the ERC and DFG}



\date{\today}%
\maketitle \thispagestyle{empty}

In \cite{cfksv} a non-commutative Iwasawa Main Conjecture for
elliptic curves over $\mathbb{Q}$ has been formulated. In this note
we show that it holds for all CM-elliptic curves $E$ defined over
$\mathbb{Q}$. This was claimed in (loc.\ cit.) without proof, which
we want to provide now assuming that the torsion conjecture holds in
this case. Based on this we show firstly the existence of the
(non-commutative) $p$-adic $L$-function of $E$ and secondly that the
(non-commutative) Main Conjecture follows from the existence of the
Katz-measure, the work of Yager and Rubin's proof of the 2-variable
main conjecture. The main issues are the comparison of the involved
periods and to show that the (non-commutative) $p$-adic $L$-function
is defined over the conjectured in (loc.\ cit.) coefficient ring.
Moreover we generalize our considerations to the case of CM-elliptic
cusp forms.

{\em Acknowledgements:} We are grateful to John Coates and Sujatha for their interest and various
discussions.

\section{The non-commutative Main Conjecture }\label{MC}

Let $E$ be an elliptic curve defined over $\mathbb{Q}$ and $p\geq 5$
a prime at which $E$ has good ordinary reduction. Then the $p$-adic
Lie extension $F_\infty:=\Q(E(p))$ with Galois group
$\mathcal{G}=G(F_\infty/\Q)$ contains the cyclotomic
$\Z_p$-extension $\Q_{cyc}$ of $\Q$ and the quotient of
$\mathcal{G}$ by its closed normal subgroup $\mathcal{H}:=
G(F_\infty/\Q_{cyc})$ equals $\Gamma:=G(\Q_{cyc}/\Q)\cong\Zp.$

We write $D$ for the ring of integers $\mathcal{O}_L$, where $L$ is
either a finite extension of $\Qp$ or of the completion
$\widehat{\Q_p^{nr}}$  of the maximal unramified extension
$\Q_p^{nr}$ of $\Q_p.$ As usual we write

\[\Lambda:= \Lambda_D(\mathcal{G}):=D\kl \mathcal{G} \kr \]

for the Iwasawa algebra of $\mathcal{G}$ with coefficients in $D.$
Note that is a Noetherian pseudo-compact semi-local ring (which is
compact if $L$ is finite over $\qp$). We denote by
\[\frak{M}_\mathcal{H}(\mathcal{G}):=\frak{M}_{D,\mathcal{H}}(\mathcal{G})\]
the category of all finitely $\Lambda(\G)$-modules $M$ such that its
quotient $M/M(p)$ by its $p$-primary subgroup $M(p)$ is finitely
generated over the subalgebra $\Lambda(\mathcal{H})$ of
$\Lambda(\G).$ Also we recall from \cite{cfksv} the definition of
the multiplicatively closed subsets

\[S:=\{\lambda\in\Lambda |\; \Lambda/\Lambda \lambda \mbox{ is finitely generated over } \Lambda(\mathcal{H})\}\]
and
\[S^*=\bigcup_{n \geq 0}p^nS.\]

The following theorem compromises the technical heart of
\cite{cfksv}.

\begin{thm}
The sets $S$ and $S^*$ are (left and right) Ore sets, i.e.\ the
localisations $\Lambda_S$ and $\Lambda_{S^*}$ of $\Lambda$ exist and
the following holds:

\begin{enumerate}
\item The category of all finitely generated $S^*$-torsion $\La(\G)$-modules  coincides with $\frak{M}_\mathcal{H}(\mathcal{G}).$
\item There is an long exact localisation sequence of $K$-groups
\[\xymatrix{
    & {\Lambda(\G)_{S^*}} \ar@{->>}[d]_{ }   &   &   \\
  K_1( \La(\G))\ar[r]^{ } & K_1(\La(\G)_{S^*}) \ar[r]^{\partial} & K_0(\mathfrak{M}_\mathcal{H}(\G) \ar[r] & 0   }\]
  and analogously for $\La(\G)_S$ and the category of finitely generated $S$-torsion modules.
  \item There is a canonical way of evaluation an element $f\in K_1(\La(\G)_{S^*})$ at any
  continuous   representation $\rho:\G\to GL_n(\O)$ with $n\geq 1$ and $\O$ the ring of integers of a finite
  extension (depending of $\rho$) of $\Qp:$
  \[f(\rho)\in \mathbb{C}_p\cup \{\infty\},\]
  i.e.\ $f$ can be considered as a map on the set of such representations.
\end{enumerate}
\end{thm}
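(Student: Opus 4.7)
The plan is to establish (i) first and derive (ii) and (iii) from it. For (i), I would reduce to showing that $S$ is Ore in $\Lambda(\mathcal{G}_0)$ for a chosen open normal uniform pro-$p$ subgroup $\mathcal{G}_0 \subset \mathcal{G}$, where the Iwasawa algebra carries a canonical filtration by powers of the augmentation ideal whose associated graded ring is a polynomial algebra over $\mathbb{F}_p$ by Lazard's theorem. Using the semidirect decomposition $\mathcal{G}_0 = \mathcal{H}_0 \rtimes \Gamma_0$, one characterises elements of $S$ via non-vanishing of their ``$\Gamma$-leading term'' in this graded ring, and the Ore condition for $\Lambda(\mathcal{G}_0)$ then reduces to a commutative statement in $\mathrm{gr}\,\Lambda(\mathcal{G}_0)$; a Mittag-Leffler / $I$-adic completeness argument lifts the approximate common multiples back to $\Lambda(\mathcal{G}_0)$. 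The property passes from $S$ to $S^* = \bigcup_n p^n S$ formally, and from $\mathcal{G}_0$ to $\mathcal{G}$ by the fact that $\Lambda(\mathcal{G}_0) \subset \Lambda(\mathcal{G})$ is a finite ring extension. For the identification of the $S^*$-torsion modules with $\mathfrak{M}_\mathcal{H}(\mathcal{G})$: given $M \in \mathfrak{M}_\mathcal{H}(\mathcal{G})$, some $p^n$ kills $M(p)$, and the assumption that $M/M(p)$ is finitely generated over $\Lambda(\mathcal{H})$ produces, from a finite presentation, an element $\lambda \in S$ annihilating $M/M(p)$; then $p^n \lambda \in S^*$ kills $M$. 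The converse direction is immediate from the defining property of $S$.

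For (ii), once $S^*$ is Ore the localization $\Lambda \to \Lambda_{S^*}$ is flat and one obtains the displayed exact sequence by applying Quillen's localization theorem to the exact category of finitely generated $S^*$-torsion $\Lambda$-modules, which by (i) coincides with $\mathfrak{M}_\mathcal{H}(\mathcal{G})$; d\'evissage identifies its $K_0$ with the one on the right. The version for $S$ is parallel, with the $p$-primary issues absent. For (iii), given $\rho: \mathcal{G} \to GL_n(\mathcal{O})$, I would form the twisted representation $\rho \otimes_{\mathcal{O}} \Lambda_{\mathcal{O}}(\Gamma)$ in which $g \in \mathcal{G}$ acts as $\rho(g) \cdot [\bar g]$, with $\bar g \in \Gamma$ the image of $g$. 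Extending by continuous $\mathcal{O}$-linearity yields a ring homomorphism
\[\Phi_\rho: \Lambda(\mathcal{G}) \longrightarrow M_n(\Lambda_{\mathcal{O}}(\Gamma)).\]
Part (i) shows that $\Phi_\rho$ carries $S^*$ into matrices invertible over the total ring of fractions $Q(\Lambda_{\mathcal{O}}(\Gamma))$, so $\Phi_\rho$ extends to $\Lambda(\mathcal{G})_{S^*} \to M_n(Q(\Lambda_{\mathcal{O}}(\Gamma)))$. Passing to $K_1$ and using $K_1(M_n(R)) \cong K_1(R)$ together with the fact that $Q(\Lambda_{\mathcal{O}}(\Gamma))$ is a finite product of fields, each $f \in K_1(\Lambda(\mathcal{G})_{S^*})$ produces an element of $Q(\Lambda_{\mathcal{O}}(\Gamma))^\times$; evaluation at the augmentation $\Gamma \to \{1\}$, interpreted as $\infty$ when a pole intervenes, yields $f(\rho) \in \mathbb{C}_p \cup \{\infty\}$.

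The principal obstacle is the Ore property in (i). Unlike in the commutative setting, there is no naive Euclidean argument producing common multiples of $x \in \Lambda$ and $s \in S$ in closed form, and the existence of such a multiple inside $\Lambda$ cannot be guaranteed term by term; one must invoke completeness of $\Lambda$ with respect to its Jacobson radical and a successive-approximation scheme that converges because each approximation improves the $I$-adic order of the error. This is precisely why one is forced to work with a uniform pro-$p$ subgroup. A secondary subtlety concerns the functoriality of the evaluation in (iii): the map $f \mapsto f(\rho)$ must depend only on the isomorphism class of $\rho$, not on the chosen basis that makes $\Phi_\rho$ into a matrix-valued map, and this is ensured by Morita-invariance of $K_1$ together with the naturality of the twisting construction.
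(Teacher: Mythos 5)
The paper itself does not prove this theorem; it is quoted from \cite{cfksv} (with the alternative proofs of \cite{ArdBrown} and \cite{sch-ven2} mentioned). Measured against those proofs, your parts (ii) and (iii) follow essentially the standard route: the localisation sequence in $K$-theory for the Ore localisation combined with the identification of the finitely generated $S^*$-torsion modules with $\mathfrak{M}_{\mathcal{H}}(\mathcal{G})$, and the evaluation map obtained from $g\mapsto \rho(g)\otimes \bar g$, $\Lambda(\mathcal{G})_{S^*}\to M_n(Q(\Lambda_{\mathcal{O}}(\Gamma)))$, Morita invariance of $K_1$ and the augmentation. One omission in (ii): the terminal $\to 0$, i.e.\ the surjectivity of $\partial$, is not part of the localisation theorem; it is equivalent to the injectivity of $K_0(\Lambda(\mathcal{G}))\to K_0(\Lambda(\mathcal{G})_{S^*})$, which in \cite{cfksv} is deduced from the semilocality of $\Lambda(\mathcal{G})$.

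The serious gap is in (i). Your characterisation of $S$ inside $\Lambda(\mathcal{G}_0)$ by the ``$\Gamma$-leading term'' of the principal symbol in $\mathrm{gr}\,\Lambda(\mathcal{G}_0)\cong\mathbb{F}_p[X_0,\dots,X_d]$ (graded for the Lazard filtration by powers of the Jacobson radical) is false in both directions, already for $\mathcal{G}_0=\mathbb{Z}_p^2$, $\Lambda=\mathbb{Z}_p[[Y,T]]$ with $\Lambda(\mathcal{H}_0)=\mathbb{Z}_p[[Y]]$: the element $Y+T^2$ lies in $S$ (the quotient is $\mathbb{Z}_p[[T]]$, free of rank $2$ over $\mathbb{Z}_p[[Y]]$ acting via $Y\mapsto -T^2$), yet its symbol is $\bar Y$, which does not involve $\bar T$ at all; conversely $YT$ has symbol $\bar Y\bar T$ but is not in $S$, since $\Lambda/(YT)$ surjects onto $\mathbb{Z}_p[[T]]$ with $Y$ acting by zero. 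Membership in $S$ is governed not by the principal symbol for this filtration but by the nonvanishing of the image of $s$ in $\Lambda(\mathcal{G}_0)/\mathfrak{m}(\mathcal{H}_0)\Lambda(\mathcal{G}_0)\cong\mathbb{F}_p[[\Gamma_0]]$ (topological Nakayama over the local ring $\Lambda(\mathcal{H}_0)$); so a filtration argument would have to use powers of $\mathfrak{m}(\mathcal{H}_0)\Lambda$, not of the augmentation ideal, and with the wrong filtration the proposed reduction of the Ore condition to a commutative statement in $\mathrm{gr}$ collapses. The proof in \cite{cfksv} avoids symbols altogether: one first shows that any left ideal $I$ with $\Lambda/I$ finitely generated over $\Lambda(\mathcal{H})$ contains an element of $S$ (a monic ``distinguished polynomial'' in a lift $\gamma$ of a generator of $\Gamma$ with coefficients in $\Lambda(\mathcal{H})$, produced from Noetherianness of $\Lambda(\mathcal{H})$), and then applies this to $I=\{\lambda:\lambda x\in\Lambda s\}$, whose quotient $\Lambda/I$ embeds into $\Lambda/\Lambda s$. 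Your reduction from $\mathcal{G}$ to $\mathcal{G}_0$ is also not formal, since $S_{\mathcal{G}}$ is defined relative to $\Lambda(\mathcal{H})$ rather than through $S_{\mathcal{G}_0}$, but that point is repairable; the leading-term characterisation is not.
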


\begin{proof}
See \cite{ven-habil, cfksv}. Another proof that $S$ is an Ore-set
was given by Ardakov and Brown in \cite{ArdBrown} (and Schneider).
The pseudo-compact case (generalised to certain skew power series
rings) is covered by \cite{sch-ven2} or can be proven in the same
way as in \cite{cfksv}.
\end{proof}

By $S(E/F_\infty)$ and $X=X(E/F_\infty)=S(E/F_\infty)^\vee$ we
denote the (classical) $p$-primary Selmer group of $E$ over
$F_\infty$ and its Pontryagin dual, respectively. It is easy to see
that $X$ is a finitely generated $\La_\zp(\G)$-module with the
natural Galois-action on $S(E/F_\infty),$ but the following
torsion-property can be interpreted as a generalization of a deep
conjecture of Mazur.

\begin{conj}[Torsion-conjecture]\label{torConj}

The dual of the Selmer group is $S^*$-torsion:
\[X(E/F_\infty)\in\mathfrak{M}_\mathcal{H}(\G).\]
\end{conj}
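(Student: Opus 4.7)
The plan is to reduce the torsion conjecture to the vanishing of the cyclotomic $\mu$-invariant of $X(E/\Q_{cyc})$, which for CM elliptic curves is known from Rubin's Main Conjecture together with the $\mu=0$ theorem of Gillard--Schneps for the Katz $p$-adic $L$-function.

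First, I would apply topological Nakayama. Taking an open pro-$p$ subgroup $\mathcal{H}_0\subseteq\mathcal{H}$ of finite index (which exists since $\mathcal{H}$ is a compact $p$-adic Lie group and $p\geq 5$), the ring $\La(\mathcal{H})$ is free of finite rank over the local ring $\La(\mathcal{H}_0)$, so $X/X(p)$ is finitely generated over $\La(\mathcal{H})$ if and only if $(X/X(p))_{\mathcal{H}_0}\otimes_{\Z_p}\mathbb F_p$ is finite. Since $(X/X(p))_{\mathcal{H}_0}$ is a quotient of $X_{\mathcal{H}_0}$, it suffices to show that $X_{\mathcal{H}_0}/pX_{\mathcal{H}_0}$ is finite.

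Second, by Mazur's control theorem (valid under the standing hypotheses that $p\geq 5$ and $E$ has good ordinary reduction), the natural map $X_{\mathcal{H}_0}\to X(E/F_0)$ has finite kernel and cokernel, where $F_0:=F_\infty^{\mathcal{H}_0}$ is a finite extension of $\Q_{cyc}$. The finiteness of $X_{\mathcal{H}_0}/p$ is then equivalent to $\mu(X(E/F_0))=0$, and since $F_0/\Q_{cyc}$ is finite this in turn is equivalent to the vanishing of $\mu(X(E/\Q_{cyc}))$.

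Finally, this cyclotomic $\mu=0$ statement is known for CM elliptic curves at good ordinary primes: since $E$ has CM by an imaginary quadratic field $K$ in which $p$ splits, Rubin's proof of the two-variable Main Conjecture over $K$ identifies, up to a unit and after period-comparison following Yager, a generator of the characteristic ideal of $X(E/\Q_{cyc})$ with the cyclotomic specialization of the Katz two-variable $p$-adic $L$-function; the Gillard--Schneps theorem then supplies the $\mu=0$ for this $L$-function. The main obstacle is this final period-comparison step, which is precisely the normalization issue highlighted in the introduction; once it is in place, the Nakayama reduction and the control theorem are essentially formal.
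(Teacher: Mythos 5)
First, be aware that the paper does not prove this statement: it is formulated as a conjecture and then explicitly adopted as a standing \emph{assumption} for the whole CM argument in Section 2. The only related content in the paper is Remark \ref{torConjThm}, which uses exactly the circle of ideas you invoke (Schneps' $\mu=0$ theorem \cite{schneps}, a descent in the style of \cite{HatchVenj}, and Perrin-Riou's non-existence of pseudonull submodules \cite{perriou}), but only to upgrade the \emph{assumed} $S^*$-torsion property to the stronger $S$-torsion property. So you are attempting strictly more than the paper does. Your Nakayama-plus-descent skeleton is the standard way to attack the full statement, and if carried out it would even give that $X(E/F_\infty)$ is finitely generated over $\Lambda(\mathcal{H})$.

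There are, however, two gaps. The minor one: ``Mazur's control theorem'' does not literally apply to $F_\infty/F_0$, since $F_0\supseteq \Q_{cyc}$ is not a number field; one needs the descent formalism of \cite{HatchVenj} (or Coates--Howson), where the kernel and cokernel of $X(E/F_\infty)_{\mathcal{H}_0}\to X(E/F_0)$ are in general only finitely generated over $\mathbb{Z}_p$ rather than finite --- this still suffices for the Nakayama step, so it is repairable. The serious gap is the final equivalence ``$\mu(X(E/F_0))=0$ iff $\mu(X(E/\Q_{cyc}))=0$ since $F_0/\Q_{cyc}$ is finite.'' Vanishing of the $\mu$-invariant is not known to ascend along finite extensions of the base field; this is essentially the content of the general $\mu=0$ conjecture. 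Here $F_0=(F_0')_{cyc}$ for a finite abelian extension $F_0'$ of $K$ of degree prime to $p$ over $\Q$, so $X(E/F_0)$ decomposes into isotypic components under $G(F_0/\Q_{cyc})$, and $\mu(X(E/\Q_{cyc}))=0$ controls only the trivial component. What your argument actually needs is $\mu=0$ over $F_0$ itself, i.e.\ for all branch characters of $G(F_0'/K)$ --- and this is precisely what Schneps/Gillard, fed through Rubin's two-variable main conjecture over $K$, provide. The fix is therefore to apply the $\mu=0$ input directly to the relevant branches of the Katz measure over $K$ rather than reducing to $\Q_{cyc}$.
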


Now let $K(F_{\infty})$ be the maximal abelian extension of
$\mathbb{Q}$ inside $ F_\infty$   in which $ p$   does not ramify
and $L = K (F_{\infty})_{\frak{P}}$ its completion at some
$\frak{P}$ lying over $p.$ Note that $L$ is a finite extension of
$\Qp.$ Finally we put $D = \mathcal{O}_L.$

If $\omega$  denotes the Neron differential of $E,$ we obtain the
usual real and complex periods
  $\Omega_{\pm} = \mathop{\int}\limits_{\gamma^{\pm}} \omega$ by integrating along pathes $\gamma^{\pm}$ which
  generate $H_1(E(\Co),\Z)^{\pm}.$ We set   $R = \{q \,\, {\mbox{prime}}, | j(E) |_q > 1 \}\cup \{p\}$
  and let $u,w$ be the roots of the characteristic polynomial of the action of Frobenius on the Tate-module
  $T_pE$ of $E$
 \[1- a_pT + pT^2 = (1 - uT) (1-wT),\;\; u \in \mathbb{Z}_{p}^{\times}.\]
  Further let   $p^{{\frak{f}}_p (\rho)}$ be the  $ p$-part of the conductor of an Artin representation $\rho,$ while
    $P_p (\rho, T) = \det (1 - Frob_p^{-1} \,\, T
  | V_{\rho}^{I_p})$   describes the Euler-factor of $\rho$ at $p.$ We also
  set   $d^{\pm} (\rho) =  \dim_\mathbb{C} V_{\rho}^{\pm}$ and denote by $\mathop{\rho}\limits^{\vee}$ the contragredient representation of $\rho.$
By $e_p(\rho)  $ we denote the local $\epsilon$-factor of $\rho$ at
  $p. $ In the notation of \cite{Tate} this is
  $e_p(\rho,\psi(-x),dx_1)$ where $\psi$ is the additive character
  of $\Q_p$ defined by $x \rightarrow exp(2\pi i x)$ and $dx_1$ is
  the Haar measure that gives volume 1 to $\zp$.
  Finally, in order to express special values of complex $L$-functions in the $p$-adic world,
  we fix embeddings   of  $ \bar{\mathbb{Q}} $ into $ \mathbb{C}$ and $\mathbb{C}_p,$ the completion of an algebraic closure of $\Qp. $

\begin{conj}[Existence of $p$-adic $L$-function]\label{Conj2}
There is a (unique) $ \mathcal{L}_E \in K_1(\Lambda (\G)_{
S^{\normalsize  *}}) $ such that
\[
\mathcal{L}_E (\mathop{\rho}\limits^{\vee}) = \frac{L_R(E, \rho,
1)}{\Omega_+^{d^+ (\rho)} \Omega_-^{d^-(\rho)}}\, e_p(\check{\rho})
\frac{P_p(\mathop{\rho}\limits^{\vee}, u^{-1})} {P_p(\rho, w^{-1})}
u^{-{\frak{f}}_p (\check{\rho})}
\]

for all Artin representations $\rho$ of $\G.$
\end{conj}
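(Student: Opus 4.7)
The plan is to construct $\mathcal{L}_E$ by reduction to the commutative situation over the CM field $K$ of $E$. Since $E$ has good ordinary reduction at $p\geq 5$, the prime $p$ splits in $K$ as $p=\p\bar\p$, and the image of the Galois representation on $T_pE$ lies in the normaliser of a split Cartan of $GL_2(\zp)$; in particular $K\subset F_\infty$, and $\mathcal{G}':=\Gal(F_\infty/K)$ is an abelian subgroup of index $2$ in $\mathcal{G}$ with quotient $\Gal(K/\Q)=\langle c\rangle$ generated by complex conjugation. Let $\psi=\psi_E$ be the Hecke character of $K$ attached to $E$, of infinity type $(1,0)$; by Artin formalism $L_R(E,\rho,s)=\prod_\eta L_R(\psi\eta,s)$, where $\eta$ ranges over the characters of $\mathcal{G}'$ occurring in $\rho|_{\mathcal{G}'}$.

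The first step is to invoke the Katz $p$-adic $L$-function on the Galois group of the maximal abelian extension of $K$ unramified outside $p$, with coefficients in $\zpnr$. A suitable twist by $\psi$ and push-forward yield a measure $\mu_\psi\in\Lambda_{\zpnr}(\mathcal{G}')_{S^*}^\times$ whose evaluation at a finite-order character $\eta$ of $\mathcal{G}'$ produces $L_R(\psi\eta,1)$ divided by a CM period ($\Omega_\infty$ or $\bar\Omega_\infty$ depending on $\eta$), times appropriate Euler and $\epsilon$-factors at $\p$ and $\bar\p$. Yager's work then provides the comparison $\Omega_+\Omega_-\sim(2\pi i)\,\Omega_\infty\bar\Omega_\infty$ (up to an explicit factor in $K$) and the transformation law of $\Omega_\infty$ under Frobenius. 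Upon identifying the Frobenius eigenvalues at $\p,\bar\p$ with the unit root $u$ and non-unit root $w$, the local factors of the Katz interpolation assemble into $e_p(\check\rho)\,P_p(\check\rho,u^{-1})/P_p(\rho,w^{-1})\cdot u^{-{\frak f}_p(\check\rho)}$.

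The second step is to pass from the abelian group $\mathcal{G}'$ to $\mathcal{G}$ at the level of $K_1$. Since $\mathcal{G}'$ is abelian one has $K_1(\Lambda_{\zpnr}(\mathcal{G}')_{S^*})=\Lambda_{\zpnr}(\mathcal{G}')_{S^*}^\times$, and the inclusion $\Lambda_{\zpnr}(\mathcal{G}')\hookrightarrow\Lambda_{\zpnr}(\mathcal{G})$ induces a natural map to $K_1(\Lambda_{\zpnr}(\mathcal{G})_{S^*})$; denote by $\widetilde{\mu}_\psi$ the image of $\mu_\psi$. Every irreducible Artin representation $\rho$ of $\mathcal{G}$ is either one-dimensional (factoring through $\mathcal{G}^{ab}$) or of the form $\Ind_{\mathcal{G}'}^{\mathcal{G}}\xi$ with $\xi\neq\xi^c$, and in either case the evaluation $\widetilde{\mu}_\psi(\rho^\vee)$ unwinds directly to the product $\prod_\eta\mu_\psi(\eta)$ over the characters $\eta$ of $\mathcal{G}'$ occurring in $\rho^\vee|_{\mathcal{G}'}$. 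Combined with inductivity of Artin $L$-functions and of local $\epsilon$-factors, this exactly reproduces the formula of Conjecture \ref{Conj2}.

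The main obstacle is to verify that $\widetilde{\mu}_\psi$ is defined over $D=\mathcal{O}_L$ rather than only over $\zpnr$. This is a Galois descent problem: one must show invariance of $\widetilde{\mu}_\psi$ under the natural action of $\Gal(\qnr/L)$, which reduces to the transformation law of the Katz measure under Frobenius being balanced by the Galois action on the $p$-adic periods --- here Yager's analysis enters decisively, and the field $L$ is chosen precisely so that the relevant combination of periods becomes Galois-invariant. Uniqueness of $\mathcal{L}_E$ with the stated interpolation property then follows from a standard density argument identifying elements of $K_1(\Lambda(\mathcal{G})_{S^*})$ by their values on Artin characters.
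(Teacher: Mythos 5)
Your overall strategy --- push forward the Katz measure from the abelian subgroup $G=\Gal(F_\infty/K)$, evaluate via restriction of Artin representations, and then descend the coefficients --- is the same as the paper's. But there are three genuine gaps.

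First, and most seriously, your treatment of the periods fails for one-dimensional representations. The image $\widetilde\mu_\psi$ of a measure on $G$ under $\Lambda(G)\hookrightarrow\Lambda(\mathcal G)$ satisfies $\widetilde\mu_\psi(\rho^\vee)=\mu_\psi(\mathrm{Res}^{\mathcal G}_G\rho^\vee)$, so it cannot distinguish $\rho$ from $\rho\otimes\epsilon$ (with $\epsilon$ the quadratic character of $\Gal(K/\Q)$); in particular it returns the same value whether $\rho(c)=+1$ or $-1$, whereas the target formula has $\Omega_+$ in one case and $\Omega_-$ in the other. Moreover the Katz interpolation at a finite-order character always involves the \emph{single} period $\Omega_\infty$ (there is no ``$\Omega_\infty$ or $\bar\Omega_\infty$ depending on $\eta$''). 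The paper repairs this with the correction unit $\mathcal L_\Omega=\frac{\Omega_+}{\Omega_\infty}\frac{1+c}{2}+\frac{\Omega_-}{\Omega_\infty}\frac{1-c}{2}$, which genuinely lives on $\mathcal G$ and not on $G$, and whose invertibility in $\Lambda_{\Z_p}(\mathcal G)$ rests on the nontrivial lemma that \emph{each} ratio $\Omega_\pm/\Omega_\infty$ lies in $\mathcal O_{\mathfrak p}^\times\cap K$ (proved by an explicit lattice argument). Your relation $\Omega_+\Omega_-\sim(2\pi i)\Omega_\infty\bar\Omega_\infty$ is both the wrong relation here (no $2\pi i$ occurs in the weight-two interpolation) and insufficient, since controlling the product does not control the two ratios separately, which is exactly what the Type A case requires.

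Second, the descent from $D=\widehat{\Z_p^{nr}}$ to $\mathcal O_L$ is not merely a matter of Frobenius-equivariance of the Katz measure versus the period: the paper's descent theorem (via the Hom-description and Taylor's fixed-point theorem for $\mathrm{Det}(K_1)$) needs as input that the interpolated values actually lie in $L(\rho)$. This is the Deligne conjecture for the relevant Hecke characters, proved by Blasius, together with a comparison of Blasius's periods $c(\psi,\chi)c(\bar\psi,\chi)$ with the product of local $\epsilon$-factors; the paper devotes a separate lemma to this and you omit it. Third, uniqueness does not follow from a ``standard density argument'': for noncommutative $\mathcal G$ it is not known that an element of $K_1(\Lambda(\mathcal G)_{S^*})$ is determined by its values at Artin representations (the paper explicitly records that it does not know whether the relevant evaluation map is injective), and accordingly the paper only proves existence.
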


For a class $[M]\in K_0(\mathfrak{M}_\mathcal{H}(\G))$ we denote by
$[M]_D$ the base change to $K_0(\mathfrak{M}_{D, \mathcal{H}}(\G)).$

\begin{conj}[Main Conjecture]\label{Conj3}
The $p$-adic $L$-function $\mathcal{L}_E$ is a characteristic
element of $X(E/F_\infty):$
\[\partial \mathcal{L}_E=[X(E/F_\infty)]_D.\]
\end{conj}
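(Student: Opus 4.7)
My plan is to deduce Conjecture~\ref{Conj3} (and, en route, Conjecture~\ref{Conj2}) by reducing to the two-variable Main Conjecture over the imaginary quadratic field $K$ by which $E$ has complex multiplication. By classical CM theory, $F_\infty\supset K$ and $H:=G(F_\infty/K)$ is abelian; writing $\Delta=G(K/\mathbb{Q})\cong\mathbb{Z}/2$, we have $1\to H\to\mathcal{G}\to\Delta\to 1$, so $\Lambda(\mathcal{G})$ is a crossed product of the commutative Iwasawa algebra $\Lambda(H)$ by $\Delta$, reducing the non-commutativity of the problem to a controlled $\mathbb{Z}/2$-action. Throughout, Conjecture~\ref{torConj} is assumed so that $X(E/F_\infty)\in\mathfrak{M}_\mathcal{H}(\mathcal{G})$.

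The core analytic input is the Katz--Yager $p$-adic $L$-function $\mathcal{L}^{\mathrm{Katz}}\in\Lambda_D(H)$, which interpolates critical values $L(\psi,1)$ of algebraic Hecke characters $\psi$ of $K$ factoring through $H$. Via the Hecke identity attached to the CM structure, these equal twisted $L$-values of $E$. Rubin's theorem, together with the Coates--Wiles arithmetic that relates the Iwasawa module of elliptic units to the dual Selmer group, then delivers the abelian Main Conjecture: $\mathcal{L}^{\mathrm{Katz}}$ is a characteristic generator of the $\Lambda_D(H)$-module $X(E/F_\infty)$.

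To ascend to $\mathcal{G}$ and simultaneously prove Conjectures~\ref{Conj2} and~\ref{Conj3}, I exploit that $\mathcal{G}$ is an extension of $\Delta=\mathbb{Z}/2$ by the abelian $H$: in this setting $K_1(\Lambda(\mathcal{G})_{S^*})$ is controlled (up to $SK_1$) by its values on Artin representations of $\mathcal{G}$. I define $\mathcal{L}_E$ by specifying its values on all Artin representations of $\mathcal{G}$ via the right-hand side of Conjecture~\ref{Conj2}; by Mackey's formula (trivially, since $|\Delta|=2$), every irreducible such representation is either inflated from a character of $\Delta$ or induced from a character of $H$, so these prescribed values are expressible through the Katz formula evaluated at the corresponding Hecke characters of $K$. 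Checking that the resulting assignment really comes from a $K_1$ element (a Fukaya--Kato / Ritter--Weiss type criterion) boils down, in this simple crossed-product case, to the Galois-equivariance already built into Yager's construction. Conjecture~\ref{Conj3} then follows: $\partial\mathcal{L}_E$ and $[X(E/F_\infty)]_D$ both satisfy the same abelian Main Conjecture over $H$ (by Rubin's theorem applied to the restriction), and they transform identically under the $\Delta$-action, hence agree in $K_0(\mathfrak{M}_\mathcal{H}(\mathcal{G}))$.

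The main obstacle, as flagged in the introduction, is neither the descent nor Rubin's input but rather the precise form of the interpolation and the coefficient ring. One must compare Katz's $p$-adic CM-period with the archimedean N\'eron periods $\Omega_{\pm}$ of $E/\mathbb{Q}$ that appear in Conjecture~\ref{Conj2}, reconciling them with the local $\epsilon$- and Euler-factor corrections at $p$, and one must verify that the resulting $\mathcal{L}_E$ lies in $K_1$ of $\Lambda_D(\mathcal{G})_{S^*}$ for the specific coefficient ring $D$ defined above, rather than in $K_1$ of a strictly larger Iwasawa algebra obtained after extension of scalars. This period comparison and the equivariance analysis needed to transfer Yager's formula from $\Lambda(H)$ down to the prescribed coefficient ring $D$ occupy the technical heart of the argument.
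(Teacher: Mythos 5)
Your overall architecture is the right one and matches the paper's: everything is reduced to the Katz/de Shalit measure and the Rubin--Yager two-variable Main Conjecture over $K$, and you correctly isolate the two genuine technical issues, namely the comparison of the CM period $\Omega_\infty$ with the N\'eron periods $\Omega_\pm$ (with the attendant $\epsilon$- and Euler-factor bookkeeping) and the descent of the coefficient ring. However, your mechanism for ascending from the abelian level to $\G$ differs from the paper's, and it is exactly there that the argument has a gap. You propose to conclude that $\partial\mathcal{L}_E=[X(E/F_\infty)]_D$ because both classes ``satisfy the same abelian Main Conjecture over $H$'' and ``transform identically under the $\Delta$-action.'' This presupposes that a class in $K_0(\mathfrak{M}_\mathcal{H}(\G))$ is determined by its restriction to the abelian subgroup together with $\Delta$-equivariance; no such injectivity statement is available, and the paper explicitly names the absence of a structure theory for comparing two classes in $K_0(\mathfrak{M}_\mathcal{H}(\G))$ as the difficulty to be avoided. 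The paper circumvents it by never producing two classes to compare: one first proves $X(E/F_\infty)\cong \Ind_\G^G\big(\mathcal{X}(K_\infty)\otimes_{\zp}T_\pi^*\big)$ (Proposition \ref{Selmerind}), then pushes the Yager element forward along $K_1(\La(G)_{S^*})\to K_1(\La(\G)_{S^*})$ induced by the ring inclusion $\La(G)\hookrightarrow\La(\G)$; the commutative diagram of localisation sequences, whose right-hand vertical map is $\La(\G)\otimes_{\La(G)}-$, gives $\partial\mathcal{L}=[X(E/F_\infty)]_D$ for free, and the period correction $\mathcal{L}_\Omega$ is a unit of $\La(\G)$, hence invisible to $\partial$. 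Note also that over the abelian algebra $X(E/F_\infty)$ is $\mathrm{Res}\,\mathrm{Ind}$ of Rubin's module, i.e.\ $(\mathcal{X}(K_\infty)\otimes T_\pi^*)\oplus(\mathcal{X}(K_\infty)\otimes T_\pi^*)^c$, with characteristic ideal generated by $\mathcal{L}_{\bar\psi}\mathcal{L}_{\bar\psi}^c$ rather than by $\mathcal{L}_{\bar\psi}$ alone; your phrasing conflates the induced module with the module in Rubin's theorem.

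A second, smaller gap: defining $\mathcal{L}_E$ by prescribing its values on Artin representations and appealing to a Fukaya--Kato/Ritter--Weiss existence criterion requires actually verifying the relevant congruences, and ``the Galois-equivariance built into Yager's construction'' does not by itself supply them. This could likely be pushed through for $\G\cong G\rtimes\Delta$ with $|\Delta|=2$ prime to $p$, but the paper again sidesteps it: existence is automatic because the element is constructed functorially as $(\iota_S)_*(\mathcal{L}_{\bar\psi})\cdot\mathcal{L}_\Omega^{-1}$, and the only Hom-description input needed is the coefficient descent from $\widehat{\Z_p^{nr}}$ to $\mathcal{O}_L$ in the appendix --- which in turn rests on the Blasius/Deligne algebraicity of the interpolated values, an ingredient your outline does not mention.
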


We refer the reader to \cite{BV1,BV2} where a refined version
involving leading terms is discussed and where some implications of
the Main Conjecture are explained.

\section{The CM-case}

In this section we assume that $E$ is still defined over $\Q,$ has
conductor $N:=N_E$ and moreover admits complex multiplication by the
ring of integers $\O_K$ of a quadratic imaginary field
$K=\Q(\sqrt{-d_K}),$ where cf.\ Appendix A {\S}3 of \cite{silv2} the
discriminant $d_K>0$ lies in the finite set
\[\{3,4,7,8,11,19,43,67,163\}.\]


Note that since $j(E)\in\Q$ the class number of $K$ is trivial:
$h_K=1.$ As we assume that $E$ has good ordinary reduction at our
fixed prime $p,$ the latter decomposes into two primes
$\mathfrak{p}=(\pi)$ and $\bar{\mathfrak{p}}=(\bar{\pi}),$ such that
$\pi$ and $\bar{\pi}$ are conjugate under $c\in\Delta:=G(K/\Q)$
which is induced by complex conjugation. We fix an embedding of $K$
into $\bar{\Q}$ such that $\mathfrak{p}$ corresponds to the induced
embedding of $K$ into $\mathbb{C}_p$ (using the convention of the
previous section).

Setting  $H:=G(K_\infty/K_{cyc}),$ $G:=(K_\infty/K)$  we obtain
isomorphisms $G/H\cong\Gamma$ and $\mathcal{H}/H\cong\Delta\cong
\mathcal{G}/G.$ The situation can be illustrated by the following
diagram of field extensions:

\[\xymatrix{
    &   &  & F_\infty=K_\infty     \\
  &   &   &   \\
    &   &   &   \\
     & K_{cyc} \ar@{-}[dddd]_{\Gamma} \ar@{-}[rruuu]_{H} &   &   \\
  {\Q_{cyc}} \ar@{-}[dddd]_{\Gamma} \ar@{-}[ru]_{\Delta}\ar@/^/@{-}[rrruuuu]^{\mathcal{H}} &   &   &   \\
     &   &   &   \\
    &   &   &   \\
    & K   \ar@/_/@{-}[rruuuuuuu]_{G}  &   &   \\
  {\Q} \ar@{-}[ru]_{\Delta } &  &   &    }\]

In particular the non-commutative group $\G$ is an extension of the
cyclic group of order $2$ by the abelian group $G,$ i.e.\ very close
to being abelian.

One strategy to verify the Main Conjecture for $E$ in the previous
section would be a close analysis how the groups
$K_1(\La(\G)_{S^*})$ and $K_0(\mathfrak{M}_\mathcal{H}(\G))$ look
like, in order to construct the (non-commutative) $p$-adic
$L$-function and to know how to show that two classes in
$K_0(\mathfrak{M}_\mathcal{H}(\G))$ in the absence of a suitable
structure theory. This certainly can be done, but we will follow a
much simpler way.

In this work we are going to make the following assumption:

\textbf{Assumption:} The Conjecture \ref{torConj} in the CM case is
true.

\begin{rem}\label{torConjThm}
We remark that this assumption implies that $X(E/F_{\infty})$ is
actually $S$-torsion. For this one has simply to show that the
$\mu$-invariant of $X(E/F_{\infty})$ is trivial since it is known by
\cite[th\'{e}or\`{e}me 2.4]{perriou} that $X(E/F_{\infty})$ contains
no non-trivial pseudonull $\Lambda(G)$ submodules. But the
triviality of the $\mu$-invariant follows from the results of
Schneps \cite{schneps} after employing a descent argument similar to
the one done by Hatchimori and Venjakob in \cite[theorem
5.3]{HatchVenj}.

\end{rem}

The key ingredient for our proof of the Main Conjecture is the fact
that to $E$ there is attached a $A_0$- Gr\"{o}{\ss}encharacter, in
the sense of Weil, $\psi$ of weight $(1,0)$ and conductor
$\mathfrak{f}=\mathfrak{f}_\psi$ with $N = d_K
N_{K/\Q}(\mathfrak{f})$ such that \be \label{induction} "E=\Ind_\Q^K
\psi",\ee This can either be understood

\begin{enumerate}
\item in the sense of the attached compatible system of $l$-adic
Galois representations. Indeed to the character $\psi$ there is by
Weil attached a compatible system of $\ell$-adic representations and
then the corresponding system of $\ell$-adic representations of $E$
are obtained by induction from the absolute Galois group of $K$ to
the absolute Galois group of $\Q$.
\item or in the automorphic induction sense. Indeed the automorphic
counterpart of $\psi$ is an adelic character $\psi :
\mathbb{A}_K^{\times}/K^{\times} \rightarrow \mathbb{C}^{\times}$
and the corresponding cuspidal automorphic representation $\pi_E$ of
$GL_2(\mathbb{A}_\Q)$ associated to $E$ is obtained by automorphic
induction from $GL_1(\mathbb{A}_K)$ to $GL_2(\mathbb{A}_\Q)$.
\end{enumerate}
There is also a Gr\"{o}{\ss}encharacter $\bar{\psi}$ of weight
$(0,1)$ (attached to $E$). As it was first shown by Weil these
characters although being adelic in nature can also be interpreted
as Galois characters of $G(K(\mathfrak{f}p^\infty)/K)$ or
$G(K_\infty/K)$ with values in $\mathbb{Z}_p^*,$ see also \cite[page
38]{deSh}. Note that, since $E$ is defined over $\Q,$ it holds by
\cite[page 559]{katz} that

\be   \bar{\psi}=\psi^c\ee

where $\psi^c(g)=\psi(cgc^{-1})$ for all $g\in G(K_\infty/K).$

By the functorial  properties of $K$- and representation theory and
\eqref{induction} the desired result can be reduced to the
two-variable Main Conjecture as proved by Rubin. In order to state
it we introduce $\mathcal{X}(K_\infty)$ to be the Galois group of
the maximal abelian $p$-extension of $K_\infty$ which is unramified
outside $\mathfrak{p}.$  This is a finitely generated torsion
$\La(G)$-module, \cite[page 37]{rubin}. Furthermore, let
\[T_\pi:=T_\pi E:=\projlim{n} E[\pi^n]\] be the $\pi$-primary
Tate-module of $E,$ similarly for $\bar{\pi},$  and
\[T_\pi^*=\Hom(T_\pi,\zp)\] its $\Zp$-dual representation of $G_K.$
Note that the action of $G_K$ on $T_\pi$ and $T_{\bar{\pi}}$ is
given by the characters $\psi$ and $\bar{\psi},$ respectively. Our
above philosophy is confirmed by the following

\begin{prop}\label{Selmerind}
There is a natural isomorphism of $\La(\G)$-modules
\[X(E/K_\infty)\cong \Ind_\G^G\left(
\mathcal{X}(K_\infty)\otimes_\zp T_\pi^*\right),\] where $G$ acts
diagonally on the tensor product $\mathcal{X}(K_\infty)\otimes_\zp
T_\pi^* \cong \mathcal{X}(K_\infty)(\psi^{-1}) .$
\end{prop}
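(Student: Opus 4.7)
The strategy is to exploit the CM splitting of the Tate module to decompose the Selmer group, then recover the $\La(\G)$-structure by induction from $G$, and finally identify the $\pi$-component with a classical Iwasawa-theoretic twist. Over $G_K$ one has $T_pE \cong T_\pi \oplus T_{\bar\pi}$, with $G_K$ acting through $\psi$ on $T_\pi$ and through $\bar\psi = \psi^c$ on $T_{\bar\pi}$, so that $E[p^\infty] \cong E[\pi^\infty] \oplus E[\bar\pi^\infty]$ as $G_K$-modules. Because the local Kummer/Selmer conditions at every place of $K_\infty$ are $G_K$-equivariantly defined, they respect this direct sum, and hence $S(E/K_\infty) = S_\pi(E/K_\infty) \oplus S_{\bar\pi}(E/K_\infty)$ as $\La(G)$-modules. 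The generator $c \in \Delta = \G/G$ swaps $\pi$ and $\bar\pi$ and hence interchanges the two summands in a $G$-semilinear way; by the universal property of induction from an index-two subgroup this yields $S(E/K_\infty) \cong \Ind_\G^G S_\pi(E/K_\infty)$ as $\La(\G)$-module, and Pontryagin duality (which commutes with finite-index induction) gives $X(E/K_\infty) \cong \Ind_\G^G X_\pi(E/K_\infty)$.

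It remains to identify $X_\pi(E/K_\infty) \cong \mathcal{X}(K_\infty) \otimes_\zp T_\pi^*$. Since $\psi$ factors through $G$, the absolute Galois group of $K_\infty$ acts trivially on $E[\pi^\infty] \cong T_\pi \otimes_\zp \qp/\zp$, so for any finite set $S$ of places of $K$ containing those above $N_E p$,
\[ \H^1(G_S(K_\infty), E[\pi^\infty]) \;\cong\; \Hom_{\mathrm{cts}}\bigl(G_S(K_\infty)^{\mathrm{ab}}(p),\, \qp/\zp\bigr) \otimes_\zp T_\pi. \]
One then translates the Selmer local conditions into ramification constraints on abelian pro-$p$ extensions of $K_\infty$: at places away from $N_E p$ and at $\bar{\mathfrak{p}}$ the module $E[\pi^\infty]$ is unramified and the good condition is literally unramifiedness; at the bad primes of $E$ the local contributions collapse in the Iwasawa limit; and at $\mathfrak{p}$ the formal-group Kummer sequence shows that no further restriction is imposed on the $\pi$-component. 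What remains on the Galois side is precisely the group $\mathcal{X}(K_\infty)$. Dualizing, and noting that $G$ acts on $T_\pi^* = \Hom(T_\pi, \zp)$ via $\psi^{-1}$, finishes the identification.

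The principal difficulty lies in the last step: matching the local Selmer conditions at $\mathfrak{p}$ and at the primes dividing the conductor of $\psi$ on the nose, so as to obtain an \emph{honest} isomorphism rather than only a pseudo-isomorphism. The analysis at $\mathfrak{p}$ relies on the formal-group short exact sequence $0 \to \hat E(\mathfrak{m}_\mathfrak{p}) \to E(K_{\infty,\mathfrak{p}}) \to \tilde E(\mathbb{F}_\mathfrak{p}) \to 0$ and on the asymmetry of $K_\infty/K$ in the $\mathfrak{p}$- and $\bar{\mathfrak{p}}$-directions, essentially as in Coates--Wiles and de Shalit; at the bad primes one uses that the inertia subgroups become trivial in the Iwasawa limit.
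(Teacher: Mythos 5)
Your proposal is correct and follows essentially the same route as the paper: the CM splitting $E[p^\infty]\cong E[\pi^\infty]\oplus E[\bar\pi^\infty]$, the observation that $c$ interchanges the two pieces so that the full Selmer group is (co)induced from the $\pi$-part over the index-two subgroup $G$, and the Coates--de Shalit identification $S_\pi(E/K_\infty)\cong\Hom(\mathcal{X}(K_\infty),E(\pi))$. The paper merely runs the first half in the opposite direction (applying $\Coind_\G^G$ to de Shalit's defining sequence for $S_\pi$ and using the vanishing of $\H^1(K_{\infty,\nu},E)(\bar\pi)$ at $\nu\mid\bar{\mathfrak{p}}$ to recover the sequence for $S(E/K_\infty)$) and cites the second half rather than sketching its proof, so the two arguments coincide in substance.
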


The proof is a modification of the proof of the old observation by
John Coates that the $\pi$-Selmer group $S_\pi(E/K_\infty)$ of $E$
over $K_\infty$ is canonically isomorphic to
\[S_\pi(E/K_\infty)\cong\Hom(\mathcal{X}(K_\infty), E(\pi)).\]

\begin{proof}
Conferring to \cite[page 124]{deSh} the $\pi$-primary Selmer group
is given by the following
 exact sequence
 \[\xymatrix@C=0.5cm{
   0 \ar[r] & S_\pi(E/K_\infty) \ar[rr]^{ } && {\H^1(G_{S_p}(K_\infty),E(\pi)) }\ar[rr]^{ } && {\Coind_G^{G_{\bar{\mathfrak{p}}}} \H^1(K_{\infty,\nu},E)(\pi)}     }\]
where the coinduction is dual to the induction functor, $\nu$
denotes a prime of $K_\infty$ above $\bar{\mathfrak{p}}.$ Applying
the exact functor $\Coind_\G^G,$ using the transitivity of
coinduction, the isomorphisms
\[\Coind_\G^G\H^1(G_{S_p}(K_\infty),E(\pi))\cong\Hom(G_{S_p}(K_\infty),\Coind_\G^G E(\pi))\cong H^1(G_{S_p}(K_\infty),E(p))\]
and
\[\H^1(K_{\infty,\nu},E)(p)\cong\H^1(K_{\infty,\nu},E)(\pi)\oplus\H^1(K_{\infty,\nu},E)(\bar{\pi}),\]
as well as the vanishing (see loc.\ cit.) of
$\H^1(K_{\infty,\nu},E)(\bar{\pi}),$ one just obtains the defining
sequence of the full $p$-primary Selmer group
\[\xymatrix@C=0.5cm{
   0 \ar[r] & S(E/K_\infty) \ar[rr]^{ } && {\H^1(G_{S_p}(K_\infty),E(p)) }\ar[rr]^{ } && {\Coind_\G^{\G_{\bar{\mathfrak{p}}}} \H^1(K_{\infty,\nu},E)(p)}.     }\]
The result now follows by taking duals.
\end{proof}

In order to introduce the (commutative) two-variable $p$-adic
$L$-function we choose a complex period $  {\Omega_\infty} \in
\mathbb{C}^\times $ such that $  \Lambda_E =
{\Omega_\infty}\mathfrak{f}_\psi$ for the period lattice $\Lambda_E$
attached to the pair $(E,\omega).$ Here we view $\mathfrak{f}_\psi$
as a lattice in $\mathbb{C}$ with respect the chosen embedding
$K\hookrightarrow \mathbb{C}$ that correspond to the type of $\psi$.
This determines the $p$-adic period
 $  \Omega_p \in \widehat{\mathbb{Z}_{p}^{nr}}^{\times} $ by the procedure  described in   \cite[page 66]{deSh}  such that the ratio $\frac{\Omega_\infty}{\Omega_p}$ is independent of any choices.
Actually we can pin down the $p$-adic $\Omega_p$ up to elements in
$\Z_p^\times$ by the rule $\frac{\Omega_p^{\Phi}}{\Omega_p}=u$ where
$\Phi$ is the extension of Frobenious to
$\widehat{\mathbb{Z}_{p}^{nr}}$. Then we have,

\begin{thm}[Manin-Vi\v{s}ik, Katz, Yager, de Shalit]\label{measureKatz} Let
$\mathfrak{f}$ be an integral ideal of $K$ relative prime to $p$.
For $D=\widehat{\mathbb{Z}_p^{nr}}$ there is a unique measure
$\mu(\mathfrak{f}\bar{\mathfrak{p}}^\infty)\in
\La_D(G(K(\mathfrak{f}p^\infty)/K))$ such that for any
gr\"{o}ssencharacter $\epsilon$ of conductor dividing
$\mathfrak{f}p^{\infty}$ and of type $(k,j)$ with $0 \leq -j$ and $k
> 0$ we have,
\[
\Omega_p^{j-k}\int_{G(K(\mathfrak{f}p^\infty)/K)}\epsilon^{-1}(\sigma)\,\,\mu(\mathfrak{f}\bar{\mathfrak{p}}^\infty)=\Omega_{\infty}^{j-k}\Gamma(k)i^k(\frac{\sqrt{d_K}}{2\pi})^{j}G(\epsilon)(1-\frac{\epsilon(\mathfrak{p})}{p})L_{\bar{\mathfrak{p}}\mathfrak{f}}(\epsilon^{-1},0)
\]
where
\[
G(\epsilon):=\frac{\phi^{k}\bar{\phi}^{j}(\mathfrak{p}^{n})}{p^{n}}G(\chi):=
\frac{\phi^{k}\bar{\phi}^{j}(\mathfrak{p}^{n})}{p^{n}}\sum_{\gamma
\in M}\chi(\gamma)(\zeta_n^{\gamma})^{-1}
\]
Here we write $\epsilon=\phi^{k}\bar{\phi}^{j}\chi$ with $\phi$ a
Gr\"{o}ssencharacter of conductor prime to $\mathfrak{p}$ and of
type $(1,0)$ and $\chi$ a character of finite order of conductor at
$\mathfrak{p}$ equal to $\mathfrak{p}^n$ and
\[
M:= \{\gamma \in
Gal(K(\mathfrak{f}\bar{\mathfrak{p}}^{\infty}\mathfrak{p}^{n})/K)|
\gamma_{|F'}=(\mathfrak{p}^{n},F'/K)\},\,\,\,\,F':=K(\mathfrak{f}\bar{\mathfrak{p}}^{\infty})
\]
\end{thm}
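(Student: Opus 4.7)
The plan is to follow the classical construction going back to Katz, with the refinements of Yager and the systematic treatment in de Shalit's book, proceeding in two steps. \emph{Uniqueness} is immediate: after choosing topological generators of a finite-index free pro-$p$ quotient of $G(K(\mathfrak{f}p^\infty)/K)$, the Iwasawa algebra $\Lambda_D(G(K(\mathfrak{f}p^\infty)/K))$ is, up to the group ring of the torsion part, a power-series ring over $D$, so any measure is determined by its integrals against all continuous $D^\times$-valued characters. The grössencharacters $\epsilon=\phi^k\bar\phi^j\chi$ with $k>0$, $-j\geq 0$ and $\chi$ of finite order include all finite-order characters of the Galois group (take $k=1$, $j=0$), so form a Zariski-dense family, and the interpolation formula forces uniqueness.

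For \emph{existence} I would produce $\mu(\mathfrak{f}\bar{\mathfrak{p}}^\infty)$ by pulling back Katz's two-variable Eisenstein measure on $p$-adic modular forms. The construction views a $p$-adic modular form as a rule on trivialized CM test objects $(A,\lambda,\psi_p)$: evaluation at the CM triple consisting of the elliptic curve $E$ with CM by $\O_K$, the Néron differential $\omega$ scaled so that $\Lambda_E=\Omega_\infty\mathfrak{f}_\psi$, and a trivialization of $\hat E$ over $\zpnr$ pinned down by the condition $\Omega_p^\Phi/\Omega_p=u$, converts the algebraic values of Katz's measure into the mixed $p$-adic/complex quantities on both sides of the interpolation formula. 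Equivalently, one may use Yager's construction, which produces the measure as the image under Coleman's logarithmic derivative map of a norm-coherent family of elliptic units in the tower $K(\mathfrak{f}\bar{\mathfrak{p}}^\infty\mathfrak{p}^n)/K$.

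The identification of the moments with the Hecke $L$-values $L_{\bar{\mathfrak{p}}\mathfrak{f}}(\epsilon^{-1},0)$ is then Damerell's formula: the value at the CM point attached to $(E,\omega)$ of a nearly-holomorphic Eisenstein series of type $(k,-j)$ and level dividing $\mathfrak{f}p^n$ equals a standard combination of $\Omega_\infty^{k-j}$, $\Gamma(k)$, $i^k$ and $(\sqrt{d_K}/2\pi)^j$ times the partial Hecke $L$-value. Writing the ramified component $\chi$ as a sum of additive characters indexed by the cosets $M$ produces the Gauss sum $G(\chi)$, while the interaction between the trivialization (which singles out $\mathfrak{p}$) and the ramification of $\chi$ (which is at $\bar{\mathfrak{p}}$) produces both the Euler factor $1-\epsilon(\mathfrak{p})/p$ and the correction factor $\phi^k\bar\phi^j(\mathfrak{p}^n)/p^n$ entering $G(\epsilon)$.

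The main obstacle is precisely this last bookkeeping. Matching the naive output of the Eisenstein measure with the asymmetric shape of the formula stated here — Euler factor at $\mathfrak{p}$ but partial $L$-value missing the $\bar{\mathfrak{p}}$-Euler factor — requires careful tracking of how the chosen embeddings $K\hookrightarrow\mathbb{C}$ and $K\hookrightarrow\mathbb{C}_p$, the splitting $p=\mathfrak{p}\bar{\mathfrak{p}}$, the unit root $u$ and the Frobenius $\Phi$ on $\zpnr$ interact. In practice I would carry out this verification by following Chapter II of de Shalit's book, where the bookkeeping is done most explicitly, and simply cite the resulting formula.
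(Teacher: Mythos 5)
Your overall strategy matches the paper's: the result is ultimately quoted from de Shalit (the paper's proof is literally the citation of \cite[Thm.~II.4.14]{deSh} together with a discussion of normalizations), and your sketch of what lies behind it --- Katz's Eisenstein measure evaluated at a trivialized CM test object, or equivalently Yager's elliptic-unit/Coleman-map construction, with Damerell's formula identifying the moments --- is the correct background. You also correctly identify the delicate point as the bookkeeping of embeddings, periods and Euler factors, which the paper likewise resolves only by pointing at de Shalit's conventions (the inversion of the argument of the integral coming from the geometric-versus-arithmetic Frobenius convention, the $(2\pi)^{-1}$ absorbed into de Shalit's archimedean period, and the $i^k$ from the normalization of $\Omega_p$).

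There is, however, one concrete gap. The interpolation range asserted in the statement is $0\leq -j$ and $k>0$ with \emph{no} upper bound on $-j$, whereas the critical-value argument you describe (Damerell's formula for nearly-holomorphic Eisenstein series at the CM point, i.e.\ de Shalit's Theorem~II.4.14) only yields the formula in the critical range $0\leq -j<k$. For types with $-j\geq k$ the value $L_{\bar{\mathfrak{p}}\mathfrak{f}}(\epsilon^{-1},0)$ is no longer critical and the stated identity is a separate theorem about the \emph{same} measure, obtained in de Shalit by combining Theorem~II.4.14 with the functional equation (Corollary~II.6.7 of loc.\ cit.). This extended range is genuinely used later in the paper --- Corollary~\ref{p-LCM1} applies the theorem to characters $\epsilon=\psi N_K^j\bar{\chi}$ of type $(k-1+j,j)$, which for higher-weight forms can fall outside the critical range --- so "simply cite the resulting formula from Chapter II" does not suffice as stated: you must additionally invoke the functional-equation extension to cover all $(k,j)$ with $k>0$ and $j\leq 0$.
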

\begin{proof}
This theorem is from \cite[thm. II 4.14]{deSh} (where the
restriction on the type of the character is that $0 \leq -j < k$) in
combination with corollary 6.7 of the same book. We note here that
the relation between the Galois counterpart of $\epsilon$ that
appear in the left hand side of the above formula and its
automorphic counterpart that shows up in the right side are related
by
$\epsilon^{Galois}(Frob^{-1}_{\mathfrak{q}})=\epsilon^{aut}(\mathfrak{q})$,
different from deShalit who sets
$\epsilon^{Galois}(Frob_{\mathfrak{q}})=\epsilon^{aut}(\mathfrak{q})$.
This explains the difference of the formula above with the one of
deShalit in page 80 (the argument of the integral is inverted). We
also need to remark that in the theorem of de Shalit also the Gamma
factor of the $L$ function appears which is equal to $(2\pi)^{-1}$.
However in de Shalit the archimedean period used is equal to
$(2\pi)^{-1}\Omega_{\infty}$ (compare theorem 4.11 and 4.12 in
\cite{deSh}). Finally the factor $i^k$ is coming from the
normalization of the $p$-adic periods (see \cite[page 70]{deSh}).
\end{proof}

For our purposes it is important to understand the relation of the
"Gauss sum" like factor with the epsilon factor at $\mathfrak{p}$.
To this end we have the following lemma,
\begin{lem} Let $\delta \in \zp^{\times}$ be the image of $\sqrt{-d_{K}}$
under $K\hookrightarrow K_{\mathfrak{p}} = \Q_{p}$ and
$\sigma_{\delta} \in
Gal(K(\mathfrak{f}p^{\infty})/K(\mathfrak{f}\bar{\mathfrak{p}}^{\infty}))$
such that $\sigma_{\delta}(\zeta) = \zeta^{\delta}$ for all
$p$-power roots of unity. Then with notation as in the theorem above
we have
\[
G(\chi)=\chi(\sigma_{\delta})e_{\mathfrak{p}}(\chi)
\]
\end{lem}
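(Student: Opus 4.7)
The plan is to compute both $G(\chi)$ and $e_\mathfrak{p}(\chi)$ as explicit finite exponential sums and to match them up by means of local class field theory. Because $K_\mathfrak{p}=\Qp$, $h_K=1$, and the ray class conductors $\mathfrak{f}\bar{\mathfrak{p}}^\infty$ and $\mathfrak{p}^n$ are coprime, the ray class fields $K(\mathfrak{f}\bar{\mathfrak{p}}^\infty)$ and $K(\mathfrak{p}^n)$ are linearly disjoint over $K$, and local class field theory furnishes an identification
\[
\Gal(K(\mathfrak{p}^n)/K)\;\cong\;(\O_K/\mathfrak{p}^n)^\times\;=\;(\Z/p^n)^\times.
\]
In particular, $M$ is a principal homogeneous space for $(\Z/p^n)^\times$: fixing a representative $\tau_0\in M$ which acts trivially on $K(\mathfrak{p}^n)$, every $\gamma\in M$ has a unique decomposition $\gamma=\sigma(u)\tau_0$, with $\sigma(u)$ the Artin image of $u\in\O_{K,\mathfrak{p}}^\times=\Zp^\times$.

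Substituting into $G(\chi)=\sum_{\gamma\in M}\chi(\gamma)\zeta_n^{-\gamma}$ and using that $\sigma(u)$ acts on $\mu_{p^n}$ via the local cyclotomic character, one rewrites the Gauss sum as $\chi(\tau_0)\sum_u\chi^{-1}(u)\,\zeta_n^{-u}$, up to the inversions dictated by the Frobenius convention. Tate's standard formula for the local epsilon factor with the prescribed normalisation (additive character $\psi(-x)$ and Haar measure $dx_1$ of $\Zp$-volume one), applied to $\chi$ regarded as a character of $\Qp^\times$ of conductor $p^n$ via local class field theory, yields
\[
e_\mathfrak{p}(\chi)\;=\;\chi(p^n)\sum_{u\in(\Z/p^n)^\times}\chi^{-1}(u)\,\zeta_n^{-u}.
\]

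The two expressions agree up to the prefactor $\chi(\tau_0)/\chi(p^n)$. Under local class field theory $\tau_0$ corresponds to a uniformiser of $K_\mathfrak{p}$, more precisely to $\pi^n$ for $\pi\in\O_K$ a generator of $\mathfrak{p}$; translating between the $\O_K$-theoretic Artin symbol and the $\Qp$-theoretic one used by Tate involves the embedding $K\hookrightarrow K_\mathfrak{p}=\Qp$, under which $\sqrt{-d_K}\mapsto\delta\in\Zp^\times$. A careful tracking of this translation, combined with the convention $\epsilon^{\mathrm{Gal}}(\mathrm{Frob}_\mathfrak{q}^{-1})=\epsilon^{\mathrm{aut}}(\mathfrak{q})$ of the present paper, identifies the prefactor discrepancy as exactly $\chi(\sigma_\delta)$: by the defining property $\sigma_\delta(\zeta)=\zeta^\delta$ on $p$-power roots of unity and by local class field theory, $\sigma_\delta$ corresponds to $\delta\in\Zp^\times$, so that $\chi(\delta)=\chi(\sigma_\delta)$.

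The main obstacle is the bookkeeping of normalisations: arithmetic versus geometric Frobenius in local class field theory, the inversion hidden in $\epsilon^{\mathrm{Gal}}(\mathrm{Frob}_\mathfrak{q}^{-1})=\epsilon^{\mathrm{aut}}(\mathfrak{q})$, and the sign $\psi(-x)$ in the normalisation of the local epsilon factor. Once all three are consistently aligned, the identity $G(\chi)=\chi(\sigma_\delta)e_\mathfrak{p}(\chi)$ reduces to a mechanical substitution in the Gauss sum, with $\chi(\sigma_\delta)$ capturing precisely the failure of the identification $K_\mathfrak{p}\cong\Qp$ to respect the natural choices on the two sides.
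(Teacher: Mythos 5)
Your strategy --- compute both $G(\chi)$ and $e_{\mathfrak{p}}(\chi)$ as explicit exponential sums over $(\Z/p^n\Z)^\times$ and match them --- is a legitimate route and is genuinely different from the one taken in the paper. The paper does not unwind either side: it starts from de Shalit's already-normalised identity (II.6.3, p.~92)
\[
e_{\mathfrak{p}}(\epsilon,\psi,dx_1)^{-1}=p^{\frac{n}{2}(k+j+1)}\delta^{-k}\epsilon(\sigma_{\delta})G(\epsilon^{-1}),
\]
specialises it to $\epsilon=\bar{\chi}\phi^{-1}$ with $\phi$ unramified of type $(1,0)$, and then eliminates $\phi$ using Tate's unramified-twisting identity $e_{\mathfrak{p}}(\phi^{-1}\bar{\chi})=\phi(\mathfrak{p})^{-n}e_{\mathfrak{p}}(\bar{\chi})$, the duality $e_{\mathfrak{p}}(\bar{\chi},\psi(x))e_{\mathfrak{p}}(\chi,\psi(-x))=p^n$, and $G(\chi\phi)=\frac{\phi(\mathfrak{p})^n}{p^n}G(\chi)$. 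In other words, all of the delicate local--global bookkeeping is outsourced to de Shalit, and the lemma becomes a short formal manipulation. Your approach would re-derive de Shalit's normalisation from scratch; that is more self-contained but also more dangerous.

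And indeed the danger materialises: the one step that constitutes the actual content of the lemma --- that the prefactor discrepancy between the two exponential sums is \emph{exactly} $\chi(\sigma_{\delta})$ --- is asserted, not derived. You write that ``a careful tracking of this translation \dots identifies the prefactor discrepancy as exactly $\chi(\sigma_{\delta})$,'' but this is precisely what has to be proved; everything before it is routine. Moreover, the place you locate the discrepancy (the ratio $\chi(\tau_0)/\chi(p^n)$ coming from the choice of base point of $M$ versus the Artin symbol of a uniformiser) is not where $\sqrt{-d_K}$ actually enters. The element $\delta$ arises from the additive character: de Shalit's epsilon factors are computed with respect to the self-dual adelic character of $\mathbb{A}_K$, whose component at $\mathfrak{p}$ differs from the standard character $\psi(-x)$ of $\Q_p$ by the local generator $\delta$ of the different $\mathfrak{d}_K=(\sqrt{-d_K})$ (a unit at $\mathfrak{p}$ since $p\nmid d_K$), and twisting the additive character by the unit $\delta$ multiplies the epsilon factor of $\chi$ by $\chi(\delta)=\chi(\sigma_{\delta})$. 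Without pinning this down, your argument shows only that $G(\chi)$ and $e_{\mathfrak{p}}(\chi)$ agree up to \emph{some} root of unity depending on $\chi$, which is not enough: the factor $\chi(\sigma_{\delta})$ is exactly what is needed later to define the twisted measure $\delta\,\sigma_{\delta}^{-1}\star\mu(\mathfrak{g}\bar{\mathfrak{p}}^{\infty})$ in the corollary following the lemma, so an unidentified constant would break the construction.
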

\begin{proof}
We base the following proof on the article of Tate \cite{Tate} and
\cite{deSh} section 6.3. By \cite{deSh} (page 92) we have
\[
e_{\mathfrak{p}}(\epsilon,\psi,dx_1)^{-1}=p^{\frac{n}{2}(k+j+1)}\delta^{-k}\epsilon(\sigma_{\delta})G(\epsilon^{-1})
\]
From the definition of $\delta \in \zp^{\times}$ we have
$\bar{\phi}(\sigma_{\delta})=1$ and $\phi(\sigma_{\delta})=\delta$
(see also \cite{deSh} (page 92). In the above equation we set
$\epsilon= \bar{\chi}\phi^{-1}$ where $\phi$ as always a
Gr\"{o}ssencharacter of type $(1,0)$. In particular we have that
$\epsilon$ is a character of type $(-1,0)$. The above equation reads
(with $k=-1$ and $j=0$)
\[
e_{\mathfrak{p}}(\bar{\chi}\phi^{-1},\psi(x),dx_1)^{-1}=\delta
\bar{\chi}(\sigma_{\delta})\phi^{-1}(\sigma_{\delta})G(\chi\phi)
\]
Now we note that $\phi^{-1}(\sigma_{\delta})=\delta^{-1}$ and since
$\phi$ is unramified at $\mathfrak{p}$ we have that (see
\cite{Tate}) $e_{\mathfrak{p}}(\phi^{-1} \bar{\chi},\psi(x),dx_1) =
\phi(\mathfrak{p})^{-n}e_{\mathfrak{p}}(\bar{\chi},\psi(x),dx_1)$.
With these remarks the above equation simplifies to
\[
e_{\mathfrak{p}}(\bar{\chi},\psi(x),dx_1)^{-1}\phi(\mathfrak{p})^n=
\bar{\chi}(\sigma_{\delta})G(\chi\phi)
\]

Using now the duality property (see \cite{Tate})
\[
e_{\mathfrak{p}}(\bar{\chi},\psi(x),dx_1)e_{\mathfrak{p}}(\chi,\psi(-x),dx_1)=p^n
\]
we obtain that
\[
e_{\mathfrak{p}}(\chi,\psi(-x),dx_1)p^{-n}\phi(\mathfrak{p})^n=
\bar{\chi}(\sigma_{\delta})G(\chi\phi)
\]
But $G(\chi\phi)=\frac{\phi(\mathfrak{p})^n}{p^n}G(\chi)$ from which
we conclude that
\[
G(\chi)=\chi(\sigma_{\delta})e_{\mathfrak{p}}(\chi,\psi(-x),dx_1)=\chi(\sigma_{\delta})e_{\mathfrak{p}}(\chi)
\]
following our convention to write $e_{\mathfrak{p}}(\cdot)$ for
$e_{\mathfrak{p}}(\cdot,\psi(-x),dx_1)$.
\end{proof}

\begin{cor}
 There is a unique $\mathcal{L}'_{\bar\psi} : = \mu \in \La_D (G)$ such that
\[
 \mathcal{L}'_{\bar\psi} { (\bar\chi) } = \mathop{\int}\limits_{G}
 { \chi } d\mu = \frac{L ({\bar\psi} \chi, 1)}{{\Omega_\infty}} \,
 e_{\mathfrak{p}}(\bar{\chi})
 P_{\frak{p}}({\bar\chi}, u^{-1}) P_{\bar{\frak{p}}} ({ \chi},
 u^{-1})\,\,u^{-\mathfrak{f_{\mathfrak{p}}}(\bar{\chi})}
 \]
 for all Artin-character $\chi$ of $G.$
\end{cor}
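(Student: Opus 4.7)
The plan is to construct $\mathcal{L}'_{\bar\psi}$ as a suitably normalised twist of the Katz measure $\mu_0:=\mu(\mathfrak{f}_\psi\bar{\mathfrak{p}}^\infty)\in\Lambda_D(G(K(\mathfrak{f}_\psi p^\infty)/K))$ provided by Theorem~\ref{measureKatz}, and then to check the interpolation formula character by character using the preceding Lemma together with the shift relation for Hecke $L$-functions. Since $K_\infty\subseteq K(\mathfrak{f}_\psi p^\infty)$, there is a natural surjection $G(K(\mathfrak{f}_\psi p^\infty)/K)\twoheadrightarrow G$, and $\psi$, viewed via the fixed embeddings as a continuous homomorphism into $D^\times$, defines a unit of the larger Iwasawa algebra. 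I would set $\mathcal{L}'_{\bar\psi}$ to be the image in $\Lambda_D(G)$ of the measure obtained from $\mu_0$ by multiplying by $\psi^{-1}$, translating by the Galois element $\sigma_\delta$ of the preceding Lemma, and scaling by $(\Omega_p\cdot i)^{-1}$. Under this definition, for each finite-order character $\chi$ of $G$,
\[
\int_G\chi\,d\mathcal{L}'_{\bar\psi}=(\Omega_p\cdot i)^{-1}\chi(\sigma_\delta)\int\psi^{-1}\chi\,d\mu_0.
\]

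To verify the interpolation, I would specialise Katz's formula at the algebraic Hecke character $\epsilon:=\psi\chi^{-1}$ of type $(k,j)=(1,0)$, which satisfies the constraints of Theorem~\ref{measureKatz}. The identity $\psi\bar\psi=N_{K/\Q}$ gives the shift $L(\psi^{-1}\chi,0)=L(\bar\psi\chi,1)$, so that the right hand side of Katz involves $L_{\bar{\mathfrak{p}}\mathfrak{f}_\psi}(\bar\psi\chi,1)$; restoring the missing Euler factor at $\bar{\mathfrak{p}}$ produces $(1-\bar\psi\chi(\bar{\mathfrak{p}})/p)=(1-\chi(\bar{\mathfrak{p}})u^{-1})=P_{\bar{\mathfrak{p}}}(\chi,u^{-1})$, using $\bar\psi(\bar{\mathfrak{p}})=\psi(\mathfrak{p})=\pi$ together with $\pi\bar\pi=p$, while the Euler factors at primes $\mathfrak{q}\mid\mathfrak{f}_\psi$ contribute trivially because $\bar\psi$ is ramified there. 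The Gauss-sum factor $G(\epsilon)=\psi(\mathfrak{p}^n)p^{-n}G(\chi^{-1})$ of Katz is then unravelled by the preceding Lemma: with $u:=\psi(\bar{\mathfrak{p}})$ the unit root of Frobenius and $n:=\mathfrak{f}_\mathfrak{p}(\bar\chi)$, one obtains $G(\epsilon)=u^{-\mathfrak{f}_\mathfrak{p}(\bar\chi)}\bar\chi(\sigma_\delta)e_\mathfrak{p}(\bar\chi)$, and the factor $\bar\chi(\sigma_\delta)$ is cancelled by the $\chi(\sigma_\delta)$ produced by the $\sigma_\delta$-translation in the definition of $\mathcal{L}'_{\bar\psi}$. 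Finally, Katz's local factor $(1-\epsilon(\mathfrak{p})/p)$ becomes $(1-\bar\chi(\mathfrak{p})u^{-1})=P_\mathfrak{p}(\bar\chi,u^{-1})$, matching the remaining piece of the desired formula.

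Uniqueness is immediate, since elements of $\Lambda_D(G)$ are determined by their values on finite-order characters of $G$. The principal bookkeeping obstacle is the careful matching of conventions: the Galois versus automorphic normalisation of $\epsilon$ stressed in the proof of Theorem~\ref{measureKatz}, the identification of the unit root $u$ with $\psi(\bar{\mathfrak{p}})$ in our chosen embeddings, and the absorption of the spurious factor $i^k=i$ from Katz into the scalar $(\Omega_p\cdot i)^{-1}$ so that the resulting measure truly lies in $\Lambda_D(G)$. Once this is cleanly arranged, all factors of the stated formula line up and the interpolation identity follows.
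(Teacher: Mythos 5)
Your overall strategy -- twist the Katz measure by $\psi^{-1}$, translate by $\sigma_\delta$, divide by $\Omega_p$, and match the factors $P_{\mathfrak{p}}(\bar\chi,u^{-1})$, $P_{\bar{\mathfrak{p}}}(\chi,u^{-1})$, $u^{-\mathfrak{f}_{\mathfrak{p}}(\bar\chi)}$ and $e_{\mathfrak{p}}(\bar\chi)$ via the Gauss-sum lemma and $u=\psi(\bar{\mathfrak{p}})$ -- is exactly the right shape and agrees with the paper on all of those points. But there is one genuine gap: you work with the \emph{single} Katz measure $\mu(\mathfrak{f}_\psi\bar{\mathfrak{p}}^\infty)$, which by Theorem \ref{measureKatz} interpolates the \emph{imprimitive} values $L_{\bar{\mathfrak{p}}\mathfrak{f}_\psi}(\epsilon^{-1},0)$, i.e.\ the $L$-function with the Euler factors at \emph{all} $\mathfrak{q}\mid\mathfrak{f}_\psi$ removed, whereas the corollary asserts interpolation of the primitive value $L(\bar\psi\chi,1)$. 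Your justification -- that these Euler factors ``contribute trivially because $\bar\psi$ is ramified there'' -- is not correct: the Euler factor at $\mathfrak{q}$ is attached to the product $\bar\psi\chi$, not to $\bar\psi$ alone. Since $K_\infty=K(E_{p^\infty})$ is ramified at the primes dividing $\mathfrak{f}_\psi$, the finite-order character $\chi$ of $G$ can itself be ramified at such a $\mathfrak{q}$, and when $\chi|_{I_{\mathfrak{q}}}$ cancels $\bar\psi|_{I_{\mathfrak{q}}}$ the product $\bar\psi\chi$ becomes unramified at $\mathfrak{q}$ and contributes a nontrivial Euler factor $(1-\bar\psi\chi(\mathfrak{q})N\mathfrak{q}^{-1})^{-1}$ that your measure does not see. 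So your $\mu$ satisfies the stated formula only for those $\chi$ whose prime-to-$p$ conductor together with that of $\psi$ is exactly $\mathfrak{f}_\psi$.

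The paper handles precisely this point by \emph{not} using a single Katz measure: it decomposes $G\cong\Delta\times G_1$ and $\La_D(G)\cong\oplus_{\theta\in\hat\Delta}D[[G_1]]$, and for each character $\theta$ of the torsion part it takes $\mathfrak{g}$ to be the prime-to-$p$ part of the conductor of $\theta\psi_\Delta$ (which, as the paper stresses, can differ from $\mathfrak{f}_\psi$) and builds the $\theta$-component $\mu_\theta$ from the Katz measure $\mu(\mathfrak{g}\bar{\mathfrak{p}}^\infty)$ attached to that $\mathfrak{g}$; the components are then reassembled into a single $\mu\in\La_D(G)$. To repair your argument you would need to incorporate this componentwise choice of conductor (or, equivalently, explicitly reinstate the primitive Euler factors at the $\mathfrak{q}\mid\mathfrak{f}_\psi$ by a unit of the Iwasawa algebra, which is essentially the same bookkeeping). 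The remaining discrepancies (your scalar $(\Omega_p\cdot i)^{-1}$ and translation by $\sigma_\delta$ versus the paper's convolution with $\delta\,\sigma_\delta^{-1}$) are matters of normalisation and do not affect the substance.
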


\begin{proof} This is in principle the measure mentioned in the
above theorem \ref{measureKatz} twisting it by the fixed character
$\psi$. However in the theorem above it is the imprimitive $L$
function that appears (as we remove the Euler factors at
$\mathfrak{f}$ even if the conductor of the character is not
$\mathfrak{f}$). For our purposes we need the primitive $L$
functions. We now explain how it can be constructed. We consider the
decomposition $Gal(K_\infty/K) = G \cong \Delta \times G_1$ where
$\Delta \cong (\mathbb{Z}/p\mathbb{Z})^{\times} \times
(\mathbb{Z}/p\mathbb{Z})^{\times}$ and $G_1 \cong \mathbb{Z}^2_p$.
This decomposition induces a decomposition of Iwasawa algebras
\[
\La_D [[G]]=D[[G]] \cong D[\Delta][[G_1]] \cong \oplus_{\theta \in
\hat{\Delta}} D[[G_1]]
\]
where $\hat{\Delta} = Hom(\Delta, D^\times)$ the dual group of
$\Delta$. For a character $\chi$ of $G$ we consider its
decomposition to $\chi=\chi_{\Delta}\chi_1$ according to the above
decomposition of $G$. Similarly we have $\psi = \psi_\Delta \psi_1$.
We are going to define the element $\mu \in \La_D [[G]]$ claimed in
the proposition by defining the elements in $ D[[G_1]]$ in the above
decomposition of the Iwasawa algebras. We define the $\theta^{th}$
component $\mu_{\theta}$ as follows. We write $\mathfrak{g}$ for the
prime to $p$ part of the conductor of the character $\theta
\psi_{\Delta}$. We note that $\mathfrak{g}$ can be different from
$\mathfrak{f}_{\psi}$. We consider the measure $\delta
\,\sigma^{-1}_{\delta} \star
\Omega_p^{-1}\mu(\mathfrak{g}\bar{\mathfrak{p}}^\infty)\in
\La_D(G(K(\mathfrak{g}p^\infty)/K))$ (here $\star$ denotes
convolution of measures) and define the measure $\mu_{\theta}$ on
$G_1$ as
\[ \int_{G_1} fd\mu_{\theta}=\frac{1}{\Omega_p}\int_{G(K(\mathfrak{g}p^\infty)/K)}\psi^{-1} \theta \cdot (f\circ pr)\; d(\delta \,\sigma^{-1}_{\delta} \star\mu(\mathfrak{g}\bar{\mathfrak{p}}^\infty))\]
for all integrable functions $f$ on $G_1$. Here
$pr:G(K(\mathfrak{g}p^\infty)/K) \twoheadrightarrow
 G_1$ denotes the natural projection map and $\psi\theta$ is seen as a character of $G(K(\mathfrak{g}p^\infty)/K)$. We define the measure
$\mu$ by putting together the components $\mu_{\theta}$ according to
the isomorphism $\La_D [[G]] \cong \oplus_{\theta \in \hat{\Delta}}
D[[G_1]]$. Now the result follows from the above theorem by taking
 $k=1, j=0, \epsilon=\psi\bar{\chi}$ and noting that
 $\mathfrak{f}_\mathfrak{p}(\bar{\chi}\psi)=\mathfrak{f}_\mathfrak{p}(\bar{\chi})=\mathfrak{f}_{\mathfrak{p}}( {\chi}
 )$ because $\psi$ is unramified at $\mathfrak{p}.$ Also note that from the equation
 \[(1-uX)(1-wX)=1-a_pX + pX^2=(1-\psi(\mathfrak{p})X)(1-\psi(\bar{\mathfrak{p}})X)\] and the
 condition that $u$ is a unit in $\zp,$ it follows that
 \be u=\psi(\bar{\mathfrak{p}})=\bar{\pi} \ee
 and
 \be u^{-1}=\frac{w}{p}=\frac{\psi( {\mathfrak{p}})}{p}.\ee

\end{proof}

\begin{cor}\label{measure1}
 There is a unique $\mathcal{L}_{\bar\psi} : = \mu \in \La_D (G)$ such that
\[
 \mathcal{L}_{\bar\psi} { (\bar\chi) } = \mathop{\int}\limits_{G}
 { \bar\chi } d\mu = \frac{L ({\bar\psi} \chi, 1)}{{\Omega_\infty}} \,
 e_{\bar{\mathfrak{p}}}(\bar{\chi})
 P_{\frak{p}}({\bar\chi}, u^{-1}) P_{\bar{\frak{p}}} ({ \chi},
 u^{-1})\,\,u^{-\mathfrak{f_{\bar{\mathfrak{p}}}}(\bar{\chi})}
 \]
 for all Artin-character $\chi$ of $G.$
\end{cor}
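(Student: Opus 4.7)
My plan is to prove Corollary \ref{measure1} by mimicking the proof of the preceding corollary, with all constructions performed on the $\bar{\mathfrak{p}}$-side rather than the $\mathfrak{p}$-side. The main input is a twin version of the Katz measure: by the symmetry of the CM setup under complex conjugation $c$, Theorem \ref{measureKatz} admits a companion producing a measure $\mu(\mathfrak{g}\mathfrak{p}^{\infty}) \in \La_D(G(K(\mathfrak{g}p^{\infty})/K))$ which interpolates essentially the same $L$-values as $\mu(\mathfrak{g}\bar{\mathfrak{p}}^{\infty})$ but with the $\mathfrak{p}$-adic Gauss sum replaced by its $\bar{\mathfrak{p}}$-analog. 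Equivalently, the twin measure is obtained from $\mu(\mathfrak{g}\bar{\mathfrak{p}}^{\infty})$ by applying the automorphism of $\La_D(G(K(\mathfrak{g}p^{\infty})/K))$ induced by conjugation with $c$, which interchanges $\mathfrak{p}$ and $\bar{\mathfrak{p}}$.

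The second step is a $\bar{\mathfrak{p}}$-analog of the preceding lemma: for a character $\chi$ of conductor $\bar{\mathfrak{p}}^n$, one gets $G(\chi) = \chi(\sigma'_\delta)\,e_{\bar{\mathfrak{p}}}(\chi)$, where $\sigma'_\delta$ is defined analogously to $\sigma_\delta$ but via the embedding $K \hookrightarrow K_{\bar{\mathfrak{p}}} \cong \Qp$ coming from $\bar{\mathfrak{p}}$. The proof of this analog is identical to that of the preceding lemma after the formal substitution $\mathfrak{p} \leftrightarrow \bar{\mathfrak{p}}$. With these two ingredients in hand, I would construct $\mathcal{L}_{\bar{\psi}}$ following exactly the recipe for $\mathcal{L}'_{\bar{\psi}}$: decompose $\La_D\kl G \kr \cong \oplus_{\theta \in \hat{\Delta}} D\kl G_1 \kr$, and define each component $\mu_\theta$ by projection and convolution with the appropriate twist of the twin Katz measure. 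The change of integrand from $\chi$ to $\bar{\chi}$ is forced because in the dual setup the natural substitution into the twin Katz formula absorbs the $\bar{\psi}$-twist into the measure while leaving $\bar{\chi}$ as the remaining integrand, still producing $L(\bar{\psi}\chi,1)$ via the identity $L(\alpha,s) = L(\alpha^c,s)$ for Hecke characters.

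The main obstacle I foresee is the bookkeeping of constants under the $\mathfrak{p} \leftrightarrow \bar{\mathfrak{p}}$ swap: one has to verify that the Euler factors $P_{\mathfrak{p}}(\bar{\chi},u^{-1})$ and $P_{\bar{\mathfrak{p}}}(\chi, u^{-1})$ remain intact (the product being naturally preserved under the simultaneous swap of primes and $\chi \leftrightarrow \bar{\chi}$, and because the unit root $u = \bar{\pi}$ interacts symmetrically with the two primes), and that the normalizations of $\Omega_p$ and of the chosen embedding of $K$ into $\bar{\mathbb{Q}}$ remain compatible with the twin construction. Once these compatibilities are checked, uniqueness follows (as in the previous corollary) from the density of Artin characters in the set of continuous characters.
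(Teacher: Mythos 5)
Your route is genuinely different from the paper's. The paper does not rerun the Katz construction on the conjugate side at all: it observes that $\mathcal{L}_{\bar\psi}$ and $\mathcal{L}'_{\bar\psi}$ interpolate the \emph{same} values $L(\bar\psi\chi,1)/\Omega_\infty$ up to the factor $e_{\mathfrak{p}}(\bar\chi)u^{-\mathfrak{f}_{\mathfrak{p}}(\bar\chi)}$ versus $e_{\bar{\mathfrak{p}}}(\bar\chi)u^{-\mathfrak{f}_{\bar{\mathfrak{p}}}(\bar\chi)}$, and that by the functional equation of the Hecke $L$-function this discrepancy equals (up to the known $\Gamma$- and power-of-$2\pi$ constants) the product $\prod_{\mathfrak{q}\nmid p}e_{\mathfrak{q}}(\psi\bar\chi\omega_{1-j},\cdot)^{\pm 1}$ of local $\epsilon$-factors away from $p$; in the proof of Corollary \ref{p-LCM2} this product is shown to be interpolated by an explicit unit $E\in\Lambda_D(G)^{\times}$ (essentially a Gauss sum times a group element $\sigma_{\mathfrak{f}_{\psi\theta}\mathfrak{d}_K}^{-1}$ on each $\theta$-component), so $\mathcal{L}_{\bar\psi}$ is simply $\mathcal{L}'_{\bar\psi}$ twisted by a unit. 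That approach buys, for free, the fact that the two measures generate the same ideal of $\Lambda_D(G)$, which is what is actually used when invoking Rubin--Yager. Your approach, if carried out, would give an independent construction of $\mathcal{L}_{\bar\psi}$ and would still require a separate argument to compare it with $\mathcal{L}'_{\bar\psi}$.

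The genuine gap in your proposal is the interpolation formula of the ``twin'' Katz measure. Applying $c$ to $\mu(\mathfrak{g}\bar{\mathfrak{p}}^\infty)$ conjugates everything in Theorem \ref{measureKatz}: the CM type flips, so the admissible range $0\le -j$, $k>0$ is stated for the conjugate type and the character $\epsilon=\psi\bar\chi$ of type $(1,0)$ is \emph{not} in the range of the pushed-forward measure --- you must instead evaluate at $\epsilon$ with $\epsilon^c$ in the original range and then invoke $L(\alpha,s)=L(\alpha^c,s)$, which is where your ``double swap'' of primes and of $\chi\leftrightarrow\bar\chi$ has to be tracked exactly (a single swap would produce $P_{\bar{\mathfrak{p}}}(\bar\chi,u^{-1})P_{\mathfrak{p}}(\chi,u^{-1})$, which differs from the asserted product when $\chi\neq\chi^c$). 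Moreover the periods in the conjugated formula are $\overline{\Omega_\infty}$ and the $p$-adic period attached to $\bar{\mathfrak{p}}$, the constant $(\sqrt{d_K}/2\pi)^j$ changes sign, and $\sigma_\delta$ becomes $\sigma_{-\delta}$; you need $\overline{\Omega_\infty}/\Omega_\infty$ and the ratio of $p$-adic periods to be units in $D$ (an argument of the same nature as the lemma on $\mathcal{L}_\Omega$), and none of this is verified in your sketch. These are not cosmetic: they are precisely the content that the paper's functional-equation argument packages into the unit $E$. Until the twin interpolation formula is written down with all normalizations and the resulting discrepancies are shown to assemble into a unit of $\Lambda_D(G)$, the proof is incomplete.
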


\begin{proof} This measure is just a twist of the measure
$\mathcal{L}'_{\bar\psi}$ by a unit in $\Lambda_D(G)$. This follows
from the functional equation of the $L$ function involved. We defer
the proof of this corollary for the next section (compare with
corollary \ref{p-LCM1} and corollary \ref{p-LCM2}).
\end{proof}

Now we are ready to state the two-variable Main Conjecture
\cite[page 37, Theorem 4.1]{rubin} and \cite[page 413, Theorem
1]{yager}:

\begin{thm}[Rubin,Yager]
As ideals in $\La_D(G)$ there is the following equality
\[\mathrm{char}\left(\mathcal{X}(K_\infty)\otimes_\zp T_\pi^*\right)=(\mathcal{L}_{\bar{\psi}}),\]
where the left hand side denotes the characteristic ideal associated
with the $\La(G)$-torsion module $\mathcal{X}(K_\infty)\otimes_\zp
T_\pi^*$ by the structure theory from commutative algebra.
\end{thm}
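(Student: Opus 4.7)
The plan is to derive the statement directly from Rubin's two-variable Main Conjecture, combined with Yager's explicit formula identifying the Coleman-power-series element arising from elliptic units with the Katz measure of Theorem \ref{measureKatz}. Rubin's Euler-system argument (\cite{rubin}) yields an equality of ideals in $\La_D(G)$ of the form
\[\mathrm{char}_{\La_D(G)}\bigl(\mathcal{X}(K_\infty)\bigr)=(\mathcal{L}_{\text{Katz}}),\]
in which, by Yager's theorem (\cite{yager}), the element $\mathcal{L}_{\text{Katz}}$ is, up to a unit and an appropriate conductor adjustment, the Katz measure $\mu(\mathfrak{f}\bar{\mathfrak{p}}^{\infty})$ from Theorem \ref{measureKatz}.

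Next I account for the tensor factor $T_\pi^{*}$ by reinterpreting it as a character twist. Since $G_K$ acts on $T_\pi$ via $\psi$, there is a canonical identification
\[\mathcal{X}(K_\infty)\otimes_\zp T_\pi^{*}\;\cong\;\mathcal{X}(K_\infty)(\psi^{-1})\]
as $\La_D(G)$-modules (as already used in Proposition \ref{Selmerind}). A character twist by $\psi^{-1}:G\to D^{\times}$ is implemented by the algebra automorphism $\tau_{\psi^{-1}}$ of $\La_D(G)$ sending $g\mapsto\psi^{-1}(g)g$, and characteristic ideals transform under $\tau_{\psi^{-1}}$ in the expected way. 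Applying $\tau_{\psi^{-1}}$ to Rubin's identity and tracing the effect on the interpolation formula of $\mathcal{L}_{\text{Katz}}$ (specialising to characters of the form $\epsilon=\psi\bar\chi$ of type $(1,0)$, which falls within the allowed range of Theorem \ref{measureKatz}), one obtains an element of $\La_D(G)$ satisfying precisely the defining interpolation property of $\mathcal{L}_{\bar\psi}$ from Corollary \ref{measure1}. By the uniqueness of such a measure, this element agrees with $\mathcal{L}_{\bar\psi}$ up to a unit, so the ideals coincide.

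The main obstacle is the careful matching of local factors between the two interpolation formulas: Theorem \ref{measureKatz} is phrased in terms of the imprimitive value $L_{\bar{\mathfrak{p}}\mathfrak{f}}(\epsilon^{-1},0)$ and a Gauss-sum factor $G(\epsilon)$, whereas Corollary \ref{measure1} uses the primitive value $L(\bar\psi\chi,1)$ together with the Euler factors $P_{\mathfrak{p}}(\bar\chi,u^{-1})$, $P_{\bar{\mathfrak{p}}}(\chi,u^{-1})$ and the local $\epsilon$-factor at $\bar{\mathfrak{p}}$. These compatibilities, however, are exactly what was handled in the construction of $\mathcal{L}'_{\bar\psi}$ (the $\theta$-by-$\theta$ definition over the characters of $\Delta$, together with the convolution by $\delta\,\sigma_\delta^{-1}$) and in the preceding lemma identifying $G(\chi)$ with $\chi(\sigma_\delta)e_{\mathfrak{p}}(\chi)$. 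Since the theorem only asserts an equality of ideals, the residual unit ambiguity is harmless and the conclusion follows.
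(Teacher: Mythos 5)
The paper offers no proof of this theorem at all: it is quoted directly from Rubin (Invent.\ Math.\ 103, Thm.\ 4.1) and Yager (Ann.\ of Math.\ 115, Thm.\ 1), and your proposal reconstructs exactly the intended derivation --- Rubin's Euler-system equality $\mathrm{char}(\mathcal{X}(K_\infty))=\mathrm{char}(U_\infty/\bar{C}_\infty)$ combined with the Yager/Coleman identification of the elliptic-unit image with the Katz measure, followed by the twist by $T_\pi^*\cong(\psi^{-1})$ implemented via the automorphism $g\mapsto\psi^{-1}(g)g$ of $\La_D(G)$ and matched against the interpolation property defining $\mathcal{L}_{\bar{\psi}}$. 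This is correct and is essentially the same route the paper takes (implicitly, via citation), so nothing further is needed.
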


\begin{lem}
The module $\mathcal{X}(K_\infty)\otimes_\zp T_\pi^*$ is
$S$-torsion, in particular $\mathcal{X}(K_\infty)\otimes_\zp
T_\pi^*\in \mathfrak{M}_H(G),$ and thus $\mathcal{L}_{\bar{\psi}}\in
(\La(G)_S)^\times\subseteq (\La(G)_{S^*})^\times.$
\end{lem}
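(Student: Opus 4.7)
The plan is to derive the lemma from Proposition \ref{Selmerind}, the Rubin-Yager theorem, and the $S$-torsion property of $X(E/F_\infty)$ supplied by Remark \ref{torConjThm} (which itself rests on our standing Assumption). By that remark, $X(E/F_\infty)$ is $S$-torsion as a $\La(\G)$-module, equivalently finitely generated over $\La(\mathcal{H})$. By Proposition \ref{Selmerind}, $X(E/F_\infty) \cong \Ind_\G^G\bigl(\mathcal{X}(K_\infty) \otimes_\zp T_\pi^*\bigr)$. Since $\G = \mathcal{H}\cdot G$ with $\mathcal{H}\cap G = H$, the Mackey decomposition of the restriction to $\La(\mathcal{H})$ has a single summand and reads
\[
X(E/F_\infty)\big|_{\La(\mathcal{H})} \;\cong\; \La(\mathcal{H}) \otimes_{\La(H)} \bigl(\mathcal{X}(K_\infty) \otimes_\zp T_\pi^*\bigr).
\]
As $\La(\mathcal{H})$ is free of rank $[\mathcal{H}:H]=2$ over $\La(H)$, finite generation of the induced module over $\La(\mathcal{H})$ is equivalent to finite generation of $\mathcal{X}(K_\infty) \otimes_\zp T_\pi^*$ over $\La(H)$. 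Hence the latter is finitely generated over $\La(H)$ and in particular lies in $\mathfrak{M}_H(G)$.

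To upgrade this to genuine $S$-torsion I would decompose $\La_D(G) \cong \bigoplus_{\theta \in \hat\Delta} D\kl G_1\kr$ as in the proof of Corollary \ref{measure1}, and realise each $\theta$-component as a power-series ring $D\kl H_1\kr\kl T\kr$, where $T$ is a topological generator of the pro-cyclic quotient $\Gamma = G/H$. On each component the action of $T$ on the finitely generated $D\kl H_1\kr$-module at hand is integral over $D\kl H_1\kr$, so its characteristic polynomial produces an element of $S$ killing that component. Next, by the Rubin-Yager theorem $(\mathcal{L}_{\bar\psi})$ equals the characteristic ideal of $\mathcal{X}(K_\infty)\otimes_\zp T_\pi^*$ inside $\La_D(G)$; Weierstrass preparation on each $\theta$-component then forces $\mathcal{L}_{\bar\psi}$ to differ by a unit of $\La_D(G)$ from an element of $S$, and thus to become a unit in $\La_D(G)_S$, hence a fortiori in $\La_D(G)_{S^*}$.

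The only point which is not pure bookkeeping is this last step---showing that a characteristic generator of an $S$-torsion module lies, up to a unit, in $S$ itself. This is where the explicit description of $S$ is needed (as opposed to merely the characterisation of $\mathfrak{M}_H(G)$ as $S^*$-torsion), and the decomposition by $\hat\Delta$-characters together with Weierstrass preparation for $D\kl H_1\kr\kl T\kr$ carries it through. All remaining steps are immediate consequences of the induction isomorphism of Proposition \ref{Selmerind} and of the fact that $\La(\mathcal{H})$ is free of finite rank over $\La(H)$.
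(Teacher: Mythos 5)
Your argument is correct, but for the decisive step it takes a genuinely different route from the paper. For the first assertion both proofs reduce, via Proposition \ref{Selmerind}, the $S$-torsion property of $\mathcal{X}(K_\infty)\otimes_\zp T_\pi^*$ to that of $X(E/K_\infty)$ from Remark \ref{torConjThm}; you merely make the Mackey/restriction bookkeeping explicit, which the paper leaves implicit. The difference lies in deducing that $\mathcal{L}_{\bar{\psi}}$ is a unit in $\La(G)_S$. The paper argues purely $K$-theoretically: it compares the localisation sequence for $\La(G)\subseteq\La(G)_S$ with the one for $\La(G)\subseteq Q(G)$ via a commutative diagram with injective vertical maps, so that the class $[\mathcal{X}(K_\infty)\otimes_\zp T_\pi^*]\in K_0(S\mathrm{-tor})$ admits a characteristic element $g\in K_1(\La(G)_S)$, and Rubin--Yager forces $\mathcal{L}_{\bar{\psi}}=g\cdot u$ with $u\in\La(G)^\times$, hence $\mathcal{L}_{\bar{\psi}}\in(\La(G)_S)^\times$ without any structure theory. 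You instead work inside $\La_D(G)\cong\oplus_\theta D\kl G_1\kr$ and exhibit the unit by hand via Weierstrass preparation in $D\kl H_1\kr\kl T\kr$. This works, but the sentence ``Weierstrass preparation \dots{} forces $\mathcal{L}_{\bar{\psi}}$ to differ by a unit from an element of $S$'' hides the one real step: from the monic annihilator $P(T)\in S$ of $M_\theta$ you must pass to the \emph{characteristic} ideal, i.e.\ observe that every height-one prime $\mathfrak{p}=(f_{\mathfrak{p}})$ in the support of $M_\theta$ contains $P(T)$, so $f_{\mathfrak{p}}$ divides $P(T)$ and is therefore nonzero modulo the maximal ideal of $D\kl H_1\kr$, whence $f_{\mathfrak{p}}\in S$ by Weierstrass and the product generating $\mathrm{char}(M_\theta)$ lies in $S$ as well. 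With that divisibility argument supplied, your proof is complete and more explicit than the paper's; the paper's is shorter and avoids the commutative structure theory entirely, which is why it generalises better.
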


\begin{proof}
By Proposition \ref{Selmerind} $\mathcal{X}(K_\infty)\otimes_\zp
T_\pi^*$ being $S$-torsion is equivalent to $X(E/K_\infty)$ being
$S$-torsion, which is just remark \ref{torConjThm}. The rest of the
claim follows from the two-variable Main Conjecture above and the
commutative diagram
\[\xymatrix{
  0 \ar[r]& K_1(\La(G)) \ar@{=}[d]              \ar[r]^{ } & K_1(\La(G)_S)  \ar@{^(->}[d]  \ar[r]^{ } & K_0(S\mathrm{-tor}) \ar@{^(->}[d]  \ar[r]^{ } &
  0
   \\
  0 \ar[r]^{ } & K_1(\La(G)) \ar[r]^{ } & K_1(Q(G)) \ar[r]^{ } & K_0(\mathrm{tors}) \ar[r]^{ } & 0   },\]
  where $Q(G)$ denotes the maximal ring of quotients of $\La(G),$ while $\mathrm{tors} $ denotes
  the category of all finitely generated $\La(G)$-torsion modules.
\end{proof}

Note that, although the Ore-sets $S^*_G$ and $S^*_\G$ are different
in general, we have a natural isomorphism
$\La(\G)_{S^*_\G}\cong\La(\G)_{S^*_G}$ by \cite[Lemma 4.6]{sch-ven2}
where in the second case the localization is formed with respect to
the $G$-module structure of $\La(\G).$ Hence we have a commutative
base change square
\[\xymatrix{
  {\La(G)} \ar@{^(->}[d]^{\iota} \ar@{^(->}[r] &  {\La(G)_S}\ar@{^(->}[d]^{\iota_S}\\
  {\La(\G)} \ar@{^(->}[r] & {\La(\G)_S} ,  }\]

which  induces the following diagram with exact rows by
functoriality of $K$-theory

  \[\xymatrix{
    0   \ar[r]^{ } & K_1(\La(G)) \ar[d]_{\iota_*} \ar[r]^{ } & K_1(\La(G)_{S^*}) \ar[d]_{{(\iota_S)_*}} \ar[r]^{\partial} & K_0(\mathfrak{M}_H(G)) \ar[d]_{\La(\G)\otimes_{\La(G)}-} \ar[r]^{ } & 0   \\
     & K_1(\La(\G)) \ar[r]^{ } & K_1(\La(\G)_{S^*}) \ar[r]^{ } & K_0(\mathfrak{M}_{\mathcal{H}}(\G)) \ar[r]^{ } & 0.   }\]

It follows that $\mathcal{L}:=(\iota_S)_*(\mathcal{L}_{\bar{\psi}})$
is automatically a characteristic element for $[X(E/K_\infty)]$ by
Proposition \ref{Selmerind}. A naive hope would be that this is the
desired $p$-adic $L$-function for $E.$ In order to check this we
need the next lemma which describes the evaluation of $\mathcal{L}$
at representations.

\begin{lem}\label{extended measure}
For all Artin representations $\rho$ of $\G$  one has
\[\mathcal{L}(\rho)=\mathcal{L}_{\bar{\psi}}(Res^\G_G\rho),\]
where $Res^\G_G\rho$ denotes the restriction to the subgroup $G.$
With other words $\mathcal{L}$ is induced by the measure
$\mu_{\mathcal{L}_{\bar{\psi}}}$ trivially extended
\[\int_\G fd\mu_\mathcal{L}=\int_G f_{|G} d\mu_{\mathcal{L}_{\bar{\psi}}},\]
i.e.\ from the image of $\mu_{\mathcal{L}_{\bar{\psi}}}$ under the
natural map $\La(G)\hookrightarrow \La(\G).$
\end{lem}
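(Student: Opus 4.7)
The plan is to deduce the lemma from the naturality of the $K_1$-evaluation map of Theorem~\ref{MC}.1 (iii) with respect to the base-change inclusion $\iota_S : \Lambda(G)_S \hookrightarrow \Lambda(\mathcal{G})_{S^*}$. The statement really says nothing more than that evaluating a pushed-forward element at a representation is the same as evaluating the original element at the pulled-back representation.

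First I would recall how $\mathcal{L}(\rho)$ is computed for an Artin representation $\rho : \mathcal{G} \to GL_n(\mathcal{O})$: since $\rho$ factors through a finite quotient of $\mathcal{G}$, linear and continuous extension yields an $\mathcal{O}$-algebra homomorphism $\tilde{\rho} : \Lambda(\mathcal{G}) \to M_n(\mathcal{O})$, which extends further to the $S^*$-localisation; applying the induced map on $K_1$ followed by the reduced norm produces $\mathcal{L}(\rho)$. Running the same recipe on $\rho|_G = \mathrm{Res}^{\mathcal{G}}_G \rho$ produces $\mathcal{L}_{\bar{\psi}}(\mathrm{Res}^{\mathcal{G}}_G \rho)$, which in this abelian situation simply coincides with the integral of $\overline{\rho|_G}$ against the measure $\mu_{\mathcal{L}_{\bar{\psi}}}$.

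The key step is the tautological identity
\[
\tilde{\rho} \circ \iota \;=\; \widetilde{\rho|_G} \;:\; \Lambda(G) \longrightarrow M_n(\mathcal{O}),
\]
which holds because on a group element $g \in G \subset \mathcal{G}$ both sides return $\rho(g)$, and both are continuous $\mathcal{O}$-algebra extensions of this common map. Extending to the localisations (noting that $\iota_S$ sends $S$ into $S^*$ by \cite[Lemma 4.6]{sch-ven2}) and applying functoriality of $K_1$ then gives
\[
\mathcal{L}(\rho) \;=\; (\tilde{\rho})_{*}(\iota_S)_{*}(\mathcal{L}_{\bar{\psi}}) \;=\; (\widetilde{\rho|_G})_{*}(\mathcal{L}_{\bar{\psi}}) \;=\; \mathcal{L}_{\bar{\psi}}(\mathrm{Res}^{\mathcal{G}}_G \rho),
\]
which is the first assertion. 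For the measure-theoretic reformulation, I would observe that $\iota : \Lambda(G) \hookrightarrow \Lambda(\mathcal{G})$ is induced by the inclusion of profinite groups $G \hookrightarrow \mathcal{G}$, so the pushforward of a measure on $G$ is just its trivial extension to $\mathcal{G}$, supported on the closed subgroup $G$; integration against it is by definition integration of the restriction, giving the displayed formula $\int_{\mathcal{G}} f\, d\mu_{\mathcal{L}} = \int_G f|_G\, d\mu_{\mathcal{L}_{\bar{\psi}}}$.

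The argument is essentially formal, and I do not anticipate any real obstacle; the only thing that needs checking is that $\tilde{\rho}$ and $\widetilde{\rho|_G}$ do extend to the respective localisations so that all the $K_1$-maps are defined. But this is exactly guaranteed by Theorem~\ref{MC}.1 (iii), together with the compatibility of $\iota_S$ with the Ore sets, so no additional work beyond unwinding definitions is required.
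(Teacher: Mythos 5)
Your overall strategy---reduce everything to the tautological compatibility of the evaluation map with the inclusion $\iota:\La(G)\hookrightarrow\La(\G)$ and then invoke functoriality of $K_1$---is exactly the route the paper takes. However, there is a genuine error in your description of how $\mathcal{L}(\rho)$ is computed, and the error sits precisely at the point where the "tautological identity" has to be formulated. You claim that the ring homomorphism $\tilde{\rho}:\La(\G)\to M_n(\O)$ "extends further to the $S^*$-localisation". It does not: an element $\lambda\in S^*$ need not map to a unit of $M_n(\O)$ under $\tilde{\rho}$. Already for $\G=\Gamma\cong\Zp$ with topological generator $\gamma$ one has $\gamma-1\in S$ (its quotient is $\Zp$), while $\tilde{\rho}(\gamma-1)=0$ for the trivial representation. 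This is not a technicality one can wave away by citing Theorem 1.1(iii): the whole point of the construction of the evaluation map in \cite{cfksv} is that $\tilde\rho$ does \emph{not} localise, which is why values in $\infty$ can occur at all; your recipe would always produce an element of $\O$.

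The correct definition, and the one the paper's proof is built on, sends $g\mapsto \rho(g)\otimes\bar{g}$, giving a ring homomorphism $\La(\G)_{S^*}\to M_n(\O)\otimes_{\zp}\La(\Gamma)_{S^*}$ (elements of $S^*$ do become invertible there), followed by $K_1$, the Morita identification, and evaluation at the trivial character of $\Gamma$. Once the evaluation is set up this way, your argument goes through verbatim: the composite $\La(G)_{S^*}\hookrightarrow\La(\G)_{S^*}\to M_n(\O)\otimes_{\zp}\La(\Gamma)_{S^*}$ agrees on group elements $g\in G$ with the map $g\mapsto (Res^\G_G\rho)(g)\otimes pr_{|G}(g)$, using that the restriction to $G$ of the projection $pr:\G\twoheadrightarrow\Gamma$ is the projection $G\twoheadrightarrow G/H\cong\Gamma$. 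This commutative square is precisely the paper's proof. So the idea is right, but as written your key step compares two maps one of which does not exist; you need to replace the target $M_n(\O)$ by $M_n(\O)\otimes_{\zp}\La(\Gamma)_{S^*}$ throughout. The final measure-theoretic reformulation is fine as you state it.
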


\begin{proof}
Upon comparing with the definition of evaluation in \cite{cfksv} the
statement follows from the following diagram, which is obviously
commutative:
\[\xymatrix{
  {\La(G)_{S^*}} \ar[d]_{ } \ar[rr]^{(Res^\G_G\rho)\otimes pr_{|G}\phantom{mmmm}} && M_n(\O)\otimes_\zp\La(\Gamma)_{S^*} \ar@{=}[d]^{ } \\
   {\La(\G)_{S^*}} \ar[rr]^{\rho\otimes pr\phantom{mmmm}} && M_n(\O)\otimes_\zp\La(\Gamma)_{S^*} ,  }\]
   where $ {pr}:\La(\G)_{S^*}\twoheadrightarrow\La(\Gamma)_{S^*}$ denotes the canonical projection.
  \end{proof}

 The next task is to understand the (irreducible) Artin representations of $\G.$ By \cite[page 62]{serre}
 and the fact that $\G$ is a semi-direct product \[\G\cong G\rtimes\Delta\]
 one immediately obtains that an irreducible such representation is either of

 {\bf Typ A :} $\rho$ is one dimensional,


 or

 {\bf Typ B:} $\rho=\Ind^G_\G\chi$ is of dimension two where $\chi$ is a one-dimensional character of
 $G$ with $\chi \neq \chi^c$.

Indeed to see that the only irreducible representations of dimension
bigger than one are those of dimension two and they are of the form
described above one has simply to use Frobenious reciprocity. For if
an Artin representation $\rho$ of $\G$ has dimension $n$ with $n
\geq 2$ and we consider its restriction to $G$ then $Res^G_\G\rho =
\oplus_{i=1}^n \chi_i$ for $n$ one dimensional representations of
$G$. But then for every $\chi_j$, with $1 \leq j \leq n$, by
applying Frobenious Reciprocity we have
\[
0 \neq (Res_\G^G\rho,\chi_j)_G = (\rho, Ind_{\G}^G\chi_j)_\G
\]
and the representation $Ind_{\G}^G\chi_j$ is of dimension two. That
is, every representation of dimension greater (or equal) than two
has a subrepresentation of dimension two. The irreducibility
assumption allows us to conclude our claim.

From Lemma \ref{extended measure} it is now clear that
\[\mathcal{L}(\epsilon)=\mathcal{L}_{\bar{\psi}}(Res^G_\G\epsilon)=\mathcal{L}(\mathbf{1})\]
where $\mathbf{1}$ is the trivial representation. But this is   not
compatible with the interpolation property in Conjecture \ref{Conj2}
because the periods $\Omega_+$ and $\Omega_-$ are interchanged. This
is clear from an philosophical point of view as the periods
$\Omega_\pm$ arise from paths $\gamma_\pm\in
\H_1(E(\Co),\Z)^\pm\subseteq \H_1(E(\Co),\Zp)^\pm\cong T_pE,$ while
the period $\Omega_\infty$ corresponds to a choice of $\gamma\in
T_\pi E$ (via the same identification). Thus we define the following
correction term which describes the change of complex periods

\[\mathcal{L}_\Omega:= \frac{\Omega_+}{\Omega_\infty} \frac{1+c}{2} + \frac{\Omega_-}{\Omega_\infty}\frac{1-c}{2}\]

\begin{lem}
$\mathcal{L}_\Omega\in \La_\zp(\G)^\times.$
\end{lem}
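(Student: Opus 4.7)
The strategy is to exploit the idempotent decomposition coming from the order-$2$ subgroup $\Delta = \langle c \rangle \subset \mathcal{G}$, and reduce the claim to showing that $r_\pm := \Omega_\pm/\Omega_\infty$ lies in $\mathbb{Z}_p^\times$. Since $p \geq 5$, $1/2 \in \mathbb{Z}_p$, so $e_\pm := (1 \pm c)/2$ are orthogonal idempotents in $\mathbb{Z}_p[\Delta]$, yielding the ring decomposition $\mathbb{Z}_p[\Delta] = \mathbb{Z}_p\,e_+ \oplus \mathbb{Z}_p\,e_- \cong \mathbb{Z}_p \times \mathbb{Z}_p$. Lifting $c$ to the complex-conjugation element of $\mathcal{G}=G(F_\infty/\mathbb{Q})$ splits the extension $1 \to G \to \mathcal{G} \to \Delta \to 1$ and realises $\mathbb{Z}_p[\Delta]$ as a subring of $\Lambda_{\mathbb{Z}_p}(\mathcal{G})$. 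In this subring the definition already reads $\mathcal{L}_\Omega = r_+ e_+ + r_- e_-$, so $\mathcal{L}_\Omega$ will be a unit in $\Lambda_{\mathbb{Z}_p}(\mathcal{G})$ as soon as $r_+, r_- \in \mathbb{Z}_p^\times$.

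\textbf{Integrality.} The primitive generators $\gamma^\pm$ of $H_1(E(\mathbb{C}), \mathbb{Z})^\pm$ sit integrally in $H_1(E(\mathbb{C}), \mathbb{Z})$, hence the periods $\Omega_\pm$ lie in $\Lambda_E = \Omega_\infty \mathfrak{f}_\psi \subset \Omega_\infty \mathcal{O}_K$. Dividing by $\Omega_\infty$ puts $r_\pm$ into $\mathfrak{f}_\psi \subset \mathcal{O}_K \subset K$. The fixed embedding $\bar{\mathbb{Q}} \hookrightarrow \mathbb{C}_p$ restricts, by our choice, to the embedding $K \hookrightarrow K_\mathfrak{p} = \mathbb{Q}_p$ attached to $\mathfrak{p}$, which immediately realises $r_\pm \in \mathbb{Z}_p$. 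The remaining question is whether $v_\mathfrak{p}(r_\pm) = 0$.

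\textbf{Unit property.} Using $h_K = 1$, write $\mathfrak{f}_\psi = (\alpha)$ with $\alpha \in \mathcal{O}_K$, and fix a $\mathbb{Z}$-basis $(1, \tau)$ of $\mathcal{O}_K$, where $\tau \in \{\sqrt{-d_K},\,(1+\sqrt{-d_K})/2\}$. Then $\Lambda_E = \mathbb{Z}\beta \oplus \mathbb{Z}\beta\tau$ with $\beta := \Omega_\infty\alpha$. Solving for the primitive $\mathbb{Z}$-generators of $\Lambda_E \cap \mathbb{R}$ and $\Lambda_E \cap i\mathbb{R}$ inside this basis yields, after dividing by $\Omega_\infty$, expressions of the form $r_+ \sim N_{K/\mathbb{Q}}(\alpha)$ and $r_- \sim N_{K/\mathbb{Q}}(\alpha)\sqrt{-d_K}$, up to integer factors dividing $|d_K|$. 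The norm $N_{K/\mathbb{Q}}(\alpha) = [\mathcal{O}_K : \mathfrak{f}_\psi]$ is a $p$-unit because $\mathfrak{f}_\psi$ is prime to $p$, and $d_K$ (hence also $\sqrt{-d_K}$) is a $\mathfrak{p}$-unit because $p$ splits --- and therefore is unramified --- in $K$ under the ordinary-reduction hypothesis. Consequently $r_\pm \in \mathbb{Z}_p^\times$ and $\mathcal{L}_\Omega \in \Lambda_{\mathbb{Z}_p}(\mathcal{G})^\times$.

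The main obstacle is precisely this last step: the explicit identification of the primitive real and imaginary lattice vectors within $\mathbb{Z}\beta \oplus \mathbb{Z}\beta\tau$ --- case-sensitive on the shape of $\tau$ but in every case elementary --- from which the $\mathfrak{p}$-unit form of $r_\pm$ emerges. Once that routine bookkeeping is in place, the remaining reduction is purely formal via the $e_\pm$-decomposition.
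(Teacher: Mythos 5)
Your reduction via the idempotents $(1\pm c)/2$ to the claim $r_\pm:=\Omega_\pm/\Omega_\infty\in\mathbb{Z}_p^\times$ is exactly the paper's, and your integrality step is correct and clean: $\Omega_\pm\in\Lambda_E=\Omega_\infty\mathfrak{f}_\psi$ forces $r_\pm\in\mathcal{O}_K\subset\mathcal{O}_{\mathfrak{p}}=\mathbb{Z}_p$. The genuine gap is the ``unit property'' step, which is the entire content of the lemma and which you leave as asserted bookkeeping. As set up, that computation cannot be carried out: to locate the primitive generators of $\Lambda_E\cap\mathbb{R}$ and $\Lambda_E\cap i\mathbb{R}$ inside the basis $\{\beta,\beta\tau\}$ with $\beta=\Omega_\infty\alpha$, you must know where the real axis sits relative to $\beta$, i.e.\ the argument of $\Omega_\infty\alpha$ modulo $\pi/2$, and nothing you have written determines it. The missing input is precisely what the paper's proof runs on: $\tau_0:=\Omega_-/\Omega_+$ is a totally imaginary element of $K$ and $\mathbb{Z}\Omega_++\mathbb{Z}\Omega_-$ has $2$-power index in $\Lambda_E$, so that (after inverting $2$) $\Omega_+^{-1}\Lambda_E=\mathbb{Z}+\mathbb{Z}\tau_0$ is an $\mathcal{O}_K$-module equal to $\mathcal{O}_K\cdot(\Omega_\infty/\Omega_+)$ up to the unit $\alpha$; writing $\tau_0=s\sqrt{-d_K}$, the paper then shows $s\in\mathbb{Z}_{(p)}^\times$ by an explicit computation with the CM order and completes at $\mathfrak{p}$. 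Note also that knowing $r_\pm\in\mathbb{Z}_p$ together with $\mathbb{Z}_p r_++\mathbb{Z}_p r_-=\mathbb{Z}_p$ only yields that \emph{one} of $r_+,r_-$ is a unit; to get both you genuinely need $v_{\mathfrak{p}}(\Omega_-/\Omega_+)=0$, which your argument never touches.

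Two symptoms that the bookkeeping was not actually performed: the predicted answer $r_+\sim N_{K/\mathbb{Q}}(\alpha)$ is a rational number, whereas the computation (once the reality of $\beta$ up to a root of unity is established) gives $r_+\sim\alpha$ up to $2$-powers --- $p$-adically harmless only because $\bar\alpha$ is also a $\mathfrak{p}$-unit, but not the formula you state; and your list of possible $\tau$ omits $\sqrt{-d_K}/2$, the correct generator when $d_K\equiv 0\ (\mathrm{mod}\ 4)$ (e.g.\ $d_K=4,8$, both of which occur for CM curves over $\mathbb{Q}$).
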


\begin{proof}In order to prove this lemma we need to understand the
relation between the Neron periods $\Omega_+$, $\Omega_-$ and the
period $\Omega_\infty$, and in particular show that the ratios
$\frac{\Omega_+}{\Omega_\infty}$ and
$\frac{\Omega_-}{\Omega_\infty}$ belong to $\zp^\times$.  Then we
have that $\mathcal{L}_\Omega\in \La_\zp(\G)$ and it is easily seen
then that the element $\frac{\Omega_\infty}{\Omega_+} \frac{1+c}{2}
+ \frac{\Omega_\infty}{\Omega_-}\frac{1-c}{2} \in \La_\zp(\G)$ is
its inverse.  Recall that we write $\Lambda$ for the lattice
associated to $E$ and we have defined $\Omega_\infty$ by $\Lambda =
\Omega_\infty \mathfrak{f}:\psi$. For our aims we may actually
assume that $\Lambda = \Omega_\infty O_K$ as $\mathfrak{f}=(f)$ for
some $f \in K \hookrightarrow K_{\p}\cong \zp$ which is a $p$-adic
unit since $(\mathfrak{f},\mathfrak{p})=1$. Recall that we view
$O_K=\Z + \Z \sqrt{-d_K}$ as a lattice in $\mathbb{C}$ by our fixed
embedding $K \hookrightarrow \mathbb{C}$.

It is a well-known fact that the Neron periods have the property
that $\tau:=\frac{\Omega_-}{\Omega_+}$ is a totally imaginary number
and moreover the lattice $\Lambda' := \mathbb{Z} \Omega_+ +
\mathbb{Z} \Omega_-$ is isomorphic to $\Lambda$ as
$\mathbb{Z}[\frac{1}{2}]$-lattices. As $p \neq 2$ we may work for
our purposes with the lattice $\Lambda'$ which we keep denoting by
$\Lambda$. As the lattice $\Lambda$ has $CM$ by $O_K$ we have that
$\frac{1}{\Omega_+}\Lambda \subseteq K$ is a fractional ideal of $K$
\cite[page 164]{silv1}. In particular we have that $\tau \in K$ and
that $\Z + \Z \tau = O_{K}\alpha$ for some $\alpha \in K$. We write
$\tau = s \sqrt{-d_K}$ and $\alpha = \alpha_{1} + \alpha_{2}
\sqrt{-d_K}$ with $s,\alpha_1,\alpha_2 \in \Q$. We will show that $s
\in \mathbb{Z}_{(p)}^{\times}$ and $\alpha \in
O_{\mathfrak{p}}^{\times}$. Note that this is enough for our
purposes because we have
\[
\mathbb{Z} +
\mathbb{Z}\tau=\mathbb{Z}+\mathbb{Z}\frac{\Omega_{-}}{\Omega_{+}}=\frac{\Lambda}{\Omega_{+}}=\frac{\Omega_{\infty}}{\Omega_{+}}O_K
\]

The lattice $\Z + \Z \tau$ has $CM$ by $O_{K}$. That means that we
have that
\[
O_{K}=\Z + \Z \sqrt{-d_K} = \{a+b \tau| a,b \in \Z, 2br \in \Z\,\,
and\,\,b(r^{2}+d_K s^{2}) \in \Z \}
\]
As $\sqrt{-d_K}$ should belong to the set on the right we have that
$bs =1$ and $d_K s \in \Z$. That is, $s=\frac{s'}{d_K}$ for some $s'
\in \Z$. As $b=\frac{1}{s}=\frac{d_K}{s'} \in \Z$ we conclude that
$s' | d_K$. Hence $s=(\frac{d_K}{s'})^{-1} \in \Z_{p}^{\times}$ as
$D \in \Z_{p}^{\times}$. That is, $\tau \in
O_{\mathfrak{p}}^{\times}$.

Now we show that $\alpha \in O_{\mathfrak{p}}^{\times}$. We have $\Z
+ \Z \tau = O_{K}\alpha$. We take the completion at $\mathfrak{p}$.
It is enough to show that $(\Z + \Z \tau) \otimes_{O_{K}}
O_{\mathfrak{p}} = O_{\mathfrak{p}}$. This follows trivially from
the fact that $\tau \in O_{\mathfrak{p}}^{\times}$. Hence we obtain
that also $\alpha$ is a unit.
\end{proof}

Finally we set
\[\mathcal{L}'_E:=\mathcal{L}\mathcal{L}_\Omega^{-1}\in
K_1(\La_D(\G)_{S^*}).\]

\begin{prop}
$\mathcal{L}'_E$ satisfies the interpolation property in Conjecture
\ref{Conj2}.
\end{prop}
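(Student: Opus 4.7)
The plan is to verify the interpolation formula by reducing to irreducible Artin representations of $\mathcal{G}$ and computing both sides explicitly in the two cases provided by the classification above: Type A (one-dimensional) and Type B ($\rho=\Ind^{\mathcal{G}}_G\chi$ with $\chi\neq\chi^c$). Since both sides of the conjectured formula are multiplicative on direct sums---by Artin formalism for $L$-functions, $\epsilon$-factors and conductors on one side, and since the $K_1$-evaluation is essentially a determinant on the other---this reduction is harmless. For Type A I would set $\eta:=\rho|_G$, which is automatically $\Delta$-invariant as $\rho$ is one-dimensional; Lemma~\ref{extended measure} then gives $\mathcal{L}(\rho^{\vee})=\mathcal{L}_{\bar\psi}(\eta^{-1})$. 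For Type B, $\mathrm{Res}^{\mathcal{G}}_G\Ind^{\mathcal{G}}_G\chi^{-1}=\chi^{-1}\oplus(\chi^c)^{-1}$, so $\mathcal{L}(\rho^{\vee})=\mathcal{L}_{\bar\psi}(\chi^{-1})\cdot\mathcal{L}_{\bar\psi}((\chi^c)^{-1})$. Corollary~\ref{measure1} then evaluates each factor, and the relations $\bar\psi\chi^c=(\psi\chi)^c$ and $L(\xi^c,s)=L(\xi,s)$, together with the prime-swap identities $e_{\bar{\mathfrak{p}}}(\chi^c)=e_{\mathfrak{p}}(\chi)$, $P_{\mathfrak{p}}(\chi^c,T)=P_{\bar{\mathfrak{p}}}(\chi,T)$, and $\mathfrak{f}_{\bar{\mathfrak{p}}}(\chi^c)=\mathfrak{f}_{\mathfrak{p}}(\chi)$ (all consequences of $c$ swapping $\mathfrak{p}$ and $\bar{\mathfrak{p}}$ above $p$) would rewrite the $(\chi^c)^{-1}$-factor in terms of data at $\mathfrak{p}$.

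Next I would handle the periods. Since $\mathcal{L}_\Omega\in\mathbb{Z}_p[\Delta]$ and $c$ acts on $V_\rho$ as an involution with eigenvalues $\pm 1$ of multiplicities $d^{\pm}(\rho)$, the $K_1$-evaluation yields
\[
  \mathcal{L}_\Omega(\rho^{\vee})\;=\;\det\rho(\mathcal{L}_\Omega)\;=\;\frac{\Omega_+^{d^+(\rho)}\Omega_-^{d^-(\rho)}}{\Omega_\infty^{\dim\rho}},
\]
so dividing $\mathcal{L}(\rho^{\vee})$ by this converts the $\Omega_\infty^{\dim\rho}$ produced by Corollary~\ref{measure1} into $\Omega_+^{d^+(\rho)}\Omega_-^{d^-(\rho)}$. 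The $L$-values assemble correctly as well: in Type B, $L(\bar\psi\chi,1)L(\psi\chi,1)=L(E,\rho,1)$ by the CM relation $\rho_E=\Ind\psi$ together with Artin formalism, while in Type A the $\Delta$-invariance of $\eta$ gives $L(\bar\psi\eta,1)=L((\psi\eta)^c,1)=L(\psi\eta,1)=L(E,\rho,1)$.

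The hard part will be matching the local factors at $p$. The point is to exploit that $j(E)\in\mathbb{Z}$ for all CM elliptic curves over $\mathbb{Q}$ with $h_K=1$, so $R=\{p\}$, and to compute the removed Euler factor $L_p(E,\rho,1)^{-1}$ directly by factoring $L(E,\rho,s)$ through the two primes of $K$ above $p$ and using $w/p=u^{-1}$, $u/p=w^{-1}$. A short calculation will yield the bridge identity
\[
  L_R(E,\rho,1)\;=\;L(E,\rho,1)\cdot P_p(\rho,u^{-1})\cdot P_p(\rho,w^{-1}),
\]
so that inserting this into the target turns the asymmetric-looking factor $P_p(\rho^{\vee},u^{-1})/P_p(\rho,w^{-1})$ into $P_p(\rho^{\vee},u^{-1})\cdot P_p(\rho,u^{-1})$, which is precisely the product of Euler factors emerging from Corollary~\ref{measure1} after applying the split-prime Artin formalism $e_p(\Ind\chi)=e_{\mathfrak{p}}(\chi)e_{\bar{\mathfrak{p}}}(\chi)$, $P_p(\Ind\chi,T)=P_{\mathfrak{p}}(\chi,T)P_{\bar{\mathfrak{p}}}(\chi,T)$, $\mathfrak{f}_p(\Ind\chi)=\mathfrak{f}_{\mathfrak{p}}(\chi)+\mathfrak{f}_{\bar{\mathfrak{p}}}(\chi)$ in Type B, and the identification $\eta_{\mathfrak{p}}=\eta_{\bar{\mathfrak{p}}}=\rho|_{G_{\mathbb{Q}_p}}$ coming from $\Delta$-invariance in Type A. Recognizing this bridge identity and carefully tracking the conventions for Frobenius and for $\bar{\chi}$ versus $\chi^{-1}$ is the main obstacle.
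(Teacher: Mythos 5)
Your proposal follows essentially the same route as the paper's proof: reduce to irreducible representations of types A and B, evaluate via Lemma \ref{extended measure} and Corollary \ref{measure1}, match the local data at $p$ through the split-prime Artin formalism for $L$-, $\epsilon$-, Euler and conductor factors together with $u^{-1}=w/p$, $w^{-1}=u/p$ (your ``bridge identity'' is just the paper's direct cancellation of $P_p(E,\rho,1/p)$ repackaged), and absorb the period discrepancy into $\mathcal{L}_\Omega(\check{\rho})=\Omega_+^{d^+(\rho)}\Omega_-^{d^-(\rho)}/\Omega_\infty^{\dim\rho}$. The one step to state more carefully is $e_p(\mathrm{Ind}\,\chi)=e_{\mathfrak{p}}(\chi)e_{\bar{\mathfrak{p}}}(\chi)$: $\epsilon$-factors are only inductive in degree zero, so as in the paper one should deduce this from inductivity of $\chi\ominus 1$ together with the triviality of the unramified $\epsilon$-factors $e_p(1)=e_p(\epsilon)=e_{\mathfrak{p}}(1)=e_{\bar{\mathfrak{p}}}(1)=1$ at the split prime $p$.
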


\begin{proof}
 We calculate the fraction $\frac{\mathcal{L}(\check{\rho} )}{\mathcal{L}'_E(\check{\rho})},$  where the denominator here is   the right hand side
 of the desired
 interpolation formula in Conjecture \ref{Conj2}, in a purely formal way in order to compare all
 terms showing up, in particular the possible presence of zeroes does not matter at all.

  We start
 with representations $\rho=\Ind^G_\G\chi$ of type B. Then $\check{\rho}_{|_G}=\bar\chi\oplus\bar \chi^c$
   where $\chi^c(g):=\chi(cgc^{-1})$  with complex conjugation $c\in \mathcal{G}.$

 $$
 \frac{\mathcal{L}(\check{\rho})}{\mathcal{L}'_E(\check{\rho})}=
 \frac{\mathcal{L}_{\bar\psi}(\bar \chi)\cdot\mathcal{L}_{\bar\psi}(\bar \chi^c)}{\mathcal{L}'_E(\check{\rho})}=
 \frac{\displaystyle{ \frac{1}{\Omega_\infty^2}}} {\Omega_+^{-1}\Omega_-^{-1}} \
 \frac{u^{-( \mathfrak{f}_{\bar{\mathfrak{p}}}(\bar{\chi})+\mathfrak{f}_{\bar{\mathfrak{p}}}(\bar{\chi}^c))}e_{\bar{\mathfrak{p}}}(\bar \chi)e_{\bar{\mathfrak{p}}}(\bar \chi^c)}{u^{-\mathfrak{f}_p(\check{\rho})}\,e_p(\check{\rho})}
 $$

 $$
 \cdot\;\;\frac{P_\mathfrak{p}(\bar \chi,u^{-1})
       P_{\bar{ \mathfrak{p}}}(  \chi,u^{-1})
       P_\mathfrak{p}(\bar \chi^c,u^{-1})
       P_{\bar{\mathfrak{p}}}(\chi^c,u^{-1})}
       {P_p(\check{\rho},u^{-1})
       P_p(\rho,w^{-1})^{-1}
       P_p(E,\rho,\frac{1}{p})}
      \frac{L(\bar{\psi} \chi,1)\, L(\bar{\psi}\chi^c,1)}{L(E,\rho,1)}
  $$

  $$
  = \frac{  \Omega_+ \Omega_-}{\:\:\:\:\Omega_\infty\  \ \Omega_\infty} \
     \frac{u^{-( \mathfrak{f}_{\mathfrak{p}}(\bar{\chi})+\mathfrak{f}_{\bar{\mathfrak{p}}}(\bar{\chi}))} e_{\mathfrak{p}}(\bar{\chi})e_{\bar{\mathfrak{p}}}(\bar{\chi})}
     {u^{-\mathfrak{f}_{ p}(\check{\rho})} \ e_p(\check{\rho})}  \
 $$
 $$
   \cdot\;\frac{P_\mathfrak{p}(\bar\chi,u^{-1})
         P_{\bar{\mathfrak{p}}}(\bar\chi,u^{-1})
         P_\mathfrak{p}( \chi^c,u^{-1})
         P_{\bar{\mathfrak{p}}}( \chi^c,u^{-1})\phantom{mm}P_{p}(\rho,w^{-1})\phantom{mm}}
        {\phantom{mmm}P_{p}(\check{\rho},u^{-1})\phantom{mmii}
         P_{\bar{ \mathfrak{p}}}(\bar{\psi} \chi,\frac{1}{p})\phantom{m}
         P_{\bar{\mathfrak{p}}}(\bar{\psi}\chi^c,\frac{1}{p})\phantom{i}P_\mathfrak{p}(\bar{\psi}\chi^c,\frac{1}{p})
         P_\mathfrak{p}(\bar{\psi} \chi,\frac{1}{p})}
 $$
 $$
  \cdot\;\;\frac{L(\bar{\psi} \chi,1)\cdot L(\bar{\psi} \chi^c,1)}{L((\textrm{Ind}\bar{\psi})\otimes \textrm{Ind}\chi,1)}
   =  \alpha^2\tau
 $$

 with $\phantom{mmmm}\tau:=\displaystyle{\frac{\Omega_-}{\Omega_+}},\ \;\;
        \alpha:=\frac{\Omega_+}{\!\!\!\Omega_\infty}\in\mathcal{O}_\mathfrak{p}^\times\cap K$.\\

Let us comment on the calculations above. We start with our
considerations on the $\epsilon$-factors. Note that as the
representation $\check{\rho}$ is induced from $\bar{\chi}$ we have
for the conductor of $\check{\rho}$, $N_{\check{\rho}}=
DN_{K/\Q}(N_{\bar{\chi}})$ where $N_{\bar{\chi}}$ the conductor of
$\bar{\chi}$. Locally at $p$ that means
$\mathfrak{f}_p(\check{\rho})=
\mathfrak{f}_{\mathfrak{p}}(\bar{\chi}) +
\mathfrak{f}_{\bar{\mathfrak{p}}}(\bar{\chi})$ as $p$ splits in $K$.
Concerning the epsilon factors note that we know that they are
inductive in degree zero. In particular we have
\[
e_p(Ind_{G}^{\G}(\bar{\chi} \ominus 1))= e_{\mathfrak{p}}(\bar{\chi}
\ominus 1)e_{\bar{\mathfrak{p}}}(\bar{\chi} \ominus 1)=
\frac{e_{\mathfrak{p}}(\bar{\chi})e_{\bar{\mathfrak{p}}}(\bar{\chi})}{e_{\mathfrak{p}}(1)e_{\bar{\mathfrak{p}}}(1)}
\]
But also
\[
e_p(Ind_{G}^{\G}(\bar{\chi} \ominus
1)=\frac{e_p(Ind_{G}^{\G}(\bar{\chi}))}{e_p(1 \oplus
\epsilon)}=\frac{e_p(\check{\rho})}{e_p(1)e_p(\epsilon)}
\]
where as always $\epsilon$ is the non-trivial character of
$Gal(K/\Q)$. Hence we obtain
\[
e_p(\check{\rho})=\frac{e_{\mathfrak{p}}(\bar{\chi})e_{\bar{\mathfrak{p}}}(\bar{\chi})}{e_{\mathfrak{p}}(1)e_{\bar{\mathfrak{p}}}(1)}e_p(1)e_p(\epsilon)
\]
But note that $p$ split in $K$ we have
$e_p(1)=e_p(\epsilon)=e_{\mathfrak{p}}(1)=e_{\bar{\mathfrak{p}}}(1)=1$.
Hence $e_p(\check{\rho}) =
e_{\mathfrak{p}}(\bar{\chi})e_{\bar{\mathfrak{p}}}(\bar{\chi})$. Now
we explain the ratio of the Euler factors. The starting point is the
inductive properties of the Euler factors. In particular as $p$
splits in $K$ we have that $P_p(\rho,X) =
P_\mathfrak{p}(\chi,X)P_{\bar{\mathfrak{p}}}(\chi,X)$. Using this
and the relations
\[u^{-1}=\frac{\psi(\frak{p})}{p}=\frac{\bar{\psi}(\bar{\frak{p}})}{p},\;\;  w^{-1}=\frac{\psi(\bar{\frak{p}})}{p}=\frac{\bar{\psi}( \frak{p})}{p}.\]
we see that the Euler factors cancel (the Euler factors which are
opposite to each other with respect to the fraction line cancel each
other). Finally for the last equation we use the inductiveness of
$L$-functions and the following well-known fact from representation
theory implied by Frobenius reciprocity
 $$
 (\textrm{Ind}\bar{\psi})\otimes \textrm{Ind} \chi= \textrm{Ind}(\bar{\psi}\otimes \textrm{Res\ Ind}\chi)=\textrm{Ind}(\bar{\psi}\otimes(\chi\oplus
 \chi^c)).
 $$

Note that, since $\bar{\psi}=\psi^c$ by \cite[page 559]{katz} also
$E=\Ind\bar{\psi}$ holds.

Now we turn to those $\rho$ of type A, i.e $\rho$ is now one
dimensional. We have

$$
  \frac{\mathcal{L}(\check{\rho})}{\mathcal{L}'_E(\check{\rho})}=
  \frac{\mathcal{L}_{\bar\psi}(\bar
  \rho_{|G})}{\mathcal{L}'_E(\check{\rho})}=
$$

$$
  = \frac{\displaystyle{ \frac{1}{\Omega_\infty}}} {\Omega_{\rho(c)}^{-1}} \
 \frac{u^{-\mathfrak{f}_{\bar{\mathfrak{p}}}(\bar{\rho}_{|G})}e_{\bar{\mathfrak{p}}}(\bar{\rho}_{|G})}{u^{-\mathfrak{f}_p(\bar{\rho})}\,e_p(\bar{\rho})}\frac{P_\mathfrak{p}(\bar{\rho}_{|G},u^{-1})
         P_{\bar{\mathfrak{p}}}(\bar\rho_{|G},u^{-1})
         P_{p}(\rho,w^{-1})}
        {P_{p}(\bar{\rho},u^{-1})P_p(E,\rho,p^{-1})}
 \frac{L(\bar{\psi} \rho_{|G},1)}{L(E,\rho,1)}=
 $$
 $$
={ \frac{\Omega_{\rho(c)}}{\Omega_\infty}}\
 \frac{u^{-\mathfrak{f}_{\bar{\mathfrak{p}}}(\bar{\rho}_{|G})}e_{\bar{\mathfrak{p}}}(\bar{\rho}_{|G})}{u^{-\mathfrak{f}_p(\bar{\rho})}\,e_p(\bar{\rho})}\frac{P_\mathfrak{p}(\bar{\rho}_{|G},u^{-1})
         P_{\bar{\mathfrak{p}}}(\bar\rho_{|G},u^{-1})
         P_{\mathfrak{p}}(\rho_{|G},w^{-1})}
        {P_{\mathfrak{p}}(\bar{\rho}_{|G},u^{-1})P_{\mathfrak{p}}(\bar{\psi}\rho_{|G},p^{-1})P_{\bar{\mathfrak{p}}}(\bar{\psi}\rho_{|G},p^{-1})}
 \frac{L(\bar{\psi} \rho_{|G},1)}{L(E,\rho,1)}=
 $$

 $$
  \frac{\Omega_{\rho(c)}}{\Omega_\infty}\frac{L(\bar{\psi}\rho_{|G},1)}{L(\textrm{Ind}(\bar{\psi}\otimes\rho_{|G}),1)}
  $$
  $$
  =  \left\{\begin{array}{lll}
              \alpha &             & \rho(c)=+1\\
                     & \textrm{if} & \\
          \alpha\tau &             & \rho(c)=-1
          \end{array}\right.
  $$

 \vspace{1cm}
  Recall that $\tau=\displaystyle{\frac{\Omega_-}{\Omega_+}},\
        \alpha=\frac{\Omega_+}{\!\!\!\Omega_\infty},\
        \alpha\tau=\frac{\Omega_-}{\!\!\!\Omega_\infty}$ all belong to $\mathcal{O}_\mathfrak{p}^\times\cap K$.
Let us comment on the above computations. We start with the epsilon
factors. First note that as the extension $K$ is unramified at $p$
we have that $\mathfrak{f}_p(\bar{\rho}) =
\mathfrak{f}_{\bar{\mathfrak{p}}}(\bar{\rho}_{|G})$ since as adelic
characters $\bar{\rho}_{|G}= \bar{\rho} \circ N_{K/\Q}$ and $p$
splits in $K$, that is locally when we identify
$K_{\bar{\mathfrak{p}}}$ with $\Q_p$ the two characters are equal.
This explains also the equality
$e_p(\bar{\rho})=e_{\bar{\mathfrak{p}}}(\bar{\rho}_{|G})$. Note that
also this remark explains the equality of Euler factors
$P_{p}(\rho,X) = P_{\mathfrak{p}}(\rho_{|G},X)$ and similarly for
$\bar{\rho}$. The rest is just trivial inspection of the formula.
Finally for the $L$-functions we use as before Frobenius reciprocity
and inductive properties of $L$-functions.

Since it is easily checked that
  $$
  \mathcal{L}_{\Omega}(\check{\rho}) =
  \left\{\begin{array}{lllll}
    \alpha^2\tau & & \rho&=&\textrm{Ind}\chi\\
    \alpha & \in D^\times,\ \textrm{if} & \rho(c)&=&+1\ \\
    \alpha\tau & & \rho(c)&=&-1
   \end{array}\right.
   $$

   for irreducible $\rho,$ we obtain that
   \[\mathcal{L}'_E(\check{\rho})\mathcal{L}_{\Omega}(\check{\rho})=\mathcal{L}(\check{\rho})\]
   for all Artin representations $\rho$ as desired. 
\end{proof}

Now let $K(F_{\infty})$ be the maximal abelian extension of
$\mathbb{Q}$ inside $ F_\infty$   in which $ p$   does not ramify
and $L = K (F_{\infty})_{\frak{P}}$ its completion at some
$\frak{P}$ lying over $p$. Then $\mathcal{L}'_E$ can be replaced by
an element $\mathcal{L}_E$ defined already over $\mathcal{O}$, the
ring of integers of $L$.

\begin{thm}
Assuming Conjecture \ref{torConj} there exists $\mathcal{L}_E\in
K_1(\La_\mathcal{O}(\G)_S)$ satisfying Conjecture \ref{Conj2} and
\ref{Conj3}.
\end{thm}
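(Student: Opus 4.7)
The plan is to prove the theorem in two stages: first verify that the element $\mathcal{L}'_E \in K_1(\La_D(\G)_{S^*})$ already constructed (with $D = \widehat{\mathbb{Z}_p^{nr}}$) satisfies Conjecture~\ref{Conj3} at the level of $D$-coefficients, and second descend the coefficient ring from $D$ to $\mathcal{O}$ by a Yager-type Frobenius descent.

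For the first stage, Rubin's two-variable Main Conjecture yields
\[\partial(\mathcal{L}_{\bar\psi}) = [\mathcal{X}(K_\infty)\otimes_\zp T_\pi^*] \in K_0(\mathfrak{M}_H(G)).\]
Pushing this equality through the base-change diagram of the $S$-localisation sequence displayed above Lemma~\ref{extended measure} and combining it with Proposition~\ref{Selmerind}, which identifies $\La(\G)\otimes_{\La(G)}(\mathcal{X}(K_\infty)\otimes_\zp T_\pi^*)$ with $X(E/F_\infty)$, produces $\partial(\mathcal{L}) = [X(E/F_\infty)]_D$, where $\mathcal{L}=(\iota_S)_*(\mathcal{L}_{\bar\psi})$. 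Since $\mathcal{L}_\Omega \in \La_\zp(\G)^\times$ comes from $K_1(\La(\G))$, its boundary vanishes, hence $\partial(\mathcal{L}'_E) = [X(E/F_\infty)]_D$. Together with the interpolation formula established in the previous proposition, this verifies both Conjecture~\ref{Conj2} and Conjecture~\ref{Conj3} at the level of $D$-coefficients. Moreover, by the lemma following Corollary~\ref{measure1}, $\mathcal{L}_{\bar\psi}\in (\La(G)_S)^\times$, so $\mathcal{L}'_E$ actually lies in $K_1(\La_D(\G)_S)$, not merely in $K_1(\La_D(\G)_{S^*})$.

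For the second stage, the Frobenius $\Phi$ of $\widehat{\mathbb{Z}_p^{nr}}$ acts coefficientwise on $\La_D(\G)$, and Yager's formula for the Katz measure supplies an explicit description of $\Phi(\mathcal{L}_{\bar\psi})$. The normalisation $\Omega_p^\Phi/\Omega_p = u$ fixing the $p$-adic period is precisely designed so that, after dividing by the appropriate power of $\Omega_p$, the resulting element $\mathcal{L}_{\bar\psi}$ is fixed by the closed subgroup $\Gal(\widehat{\mathbb{Q}_p^{nr}}/L)$. A faithfully-flat descent argument then shows that $\mathcal{L}_{\bar\psi}$ comes from a unique element of $\La_\mathcal{O}(G)_S$. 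Since the correction $\mathcal{L}_\Omega$ already lies in $\La_\zp(\G)\subseteq \La_\mathcal{O}(\G)$, the same holds for $\mathcal{L}'_E=\mathcal{L}\mathcal{L}_\Omega^{-1}$, producing the sought element $\mathcal{L}_E\in K_1(\La_\mathcal{O}(\G)_S)$ which still satisfies the interpolation property (since evaluation at Artin representations is Galois-equivariant) and for which $\partial(\mathcal{L}_E) = [X(E/F_\infty)]$ in $K_0(\mathfrak{M}_{\mathcal{O},\mathcal{H}}(\G))$.

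The main obstacle is the second stage, namely the verification that the transformation rule of the Katz measure under $\Phi$ combines with the chosen normalisation of $\Omega_p$ to yield exactly $\Gal(\widehat{\mathbb{Q}_p^{nr}}/L)$-invariance. Everything else is essentially formal, resting on Rubin--Yager, Proposition~\ref{Selmerind}, the $K$-theoretic base-change diagram, and the period-comparison computation carried out in the preceding proposition.
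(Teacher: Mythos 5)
Your first stage is essentially the paper's argument and is fine: Rubin--Yager together with Proposition \ref{Selmerind} and the $K$-theoretic base-change diagram preceding Lemma \ref{extended measure} give $\partial(\mathcal{L})=[X(E/F_\infty)]_D$, the correction $\mathcal{L}_\Omega$ is a unit coming from $K_1(\La_\zp(\G))$, and the lemma after Corollary \ref{measure1} upgrades $S^*$ to $S$; combined with the interpolation proposition this settles both conjectures over $D$.

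The second stage contains a genuine gap. You propose to descend $\mathcal{L}_{\bar\psi}$ itself from $\La_D(G)$ to $\La_\O(G)$ by showing it is fixed by $\Gal(\widehat{\Q_p^{nr}}/L)$ after the normalisation $\Omega_p^\Phi/\Omega_p=u$, and then applying faithfully flat descent. This cannot work, and the paper explains why at the end of Section~2: for a general finite character $\chi$ of $G$ the value $\mathcal{L}_{\bar\psi}(\bar\chi)$ does not lie in $K(\chi)$ (hence in no completion of a subfield of $\Q^{ab}$, in particular not in $L(\chi)$ since $L$ is unramified over $\qp$), because by Blasius the quotient $L(\bar\psi\chi,1)/\Omega_\infty$ becomes rational only after dividing by the period $c(\bar\psi,\chi)$, and $c(\bar\psi,\chi)$ differs from the Gauss-sum factor $e_{\mathfrak{p}}(\chi)$ by quantities generating abelian extensions of $K$ \emph{not} contained in $\Q^{ab}$. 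So no renormalisation of $\Omega_p$ renders the two-variable measure Galois-invariant; only the element induced up to $\G$ descends. What makes the induced element descend is that its values at Artin representations of $\G$ involve the full products $e_{\mathfrak{p}}(\bar\chi)e_{\bar{\mathfrak{p}}}(\bar\chi)=e_p(\check{\rho})$ and $c(\psi,\chi)c(\bar\psi,\chi)$, and these agree up to $L(\chi)^\times$ by the reciprocity laws for Tate's half-transfer maps; this is precisely the content of the Deligne--Blasius lemma following the theorem, which your proposal never invokes. Finally, even granting rationality of the values, the descent over the non-abelian $\G$ is not faithfully flat descent of an invariant element: the paper carries it out via Theorem \ref{L-descent} of the appendix, i.e.\ Fr\"ohlich's Hom-description and the generalisation of Taylor's theorem $\mathrm{Det}(K_1(\La_D(\G)))^\Delta=\mathrm{Det}(K_1(\La_\O(\G)))$, which produces an a priori \emph{different} element of $K_1(\La_\O(\G)_{S^*})$ with the same values at all Artin representations and the same boundary; hypothesis (v) of that theorem is exactly what your stage one supplies.
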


\begin{proof}
This follows from the explanation before Lemma \ref{extended
measure} and Theorem \ref{L-descent} in the appendix. Indeed
assumption (ii) holds because $\mathcal{O}(\rho)$ is contained in
$\mathcal{O}(\mu(p))$ as the values of $\chi$ (and thus $\rho=
\textrm{Ind}\chi$ or $\chi$)are in $\mu_{(p-1)p^\infty};$ (iii) is
clear from the construction, (iv)  follows from the Lemma below
while (v) again follows from the explanation before Lemma
\ref{extended measure}.
\end{proof}

\begin{lem}[Deligne-Conjecture, Blasius] In the CM setting the following algebraicity
result holds:
\[
\frac{L_{\{p\}}(E, \rho, 1)}{\Omega_+^{d^+ (\rho)}
\Omega_-^{d^-(\rho)}}\, e_p(\check{\rho})
\frac{P_p(\mathop{\rho}\limits^{\vee}, u^{-1})} {P_p(\rho, w^{-1})}
u^{-{\frak{f}}_p (\check{\rho})} \in L(\rho).
\]
Moreover these values are $p$-adically integral.
\end{lem}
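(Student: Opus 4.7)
The plan is to reduce, as in the proof of the preceding proposition, to the two families of irreducible Artin representations of $\G$ identified before Lemma~\ref{extended measure}: Type A, where $\rho$ is a one-dimensional character, and Type B, where $\rho=\Ind_G^\G\chi$ for a character $\chi$ of $G$ with $\chi\neq\chi^c$. In each case Frobenius reciprocity together with the automorphic induction identity $E=\Ind_K^\Q\bar\psi$ expresses $L(E,\rho,s)$ as $L(\bar\psi\,\rho_{|G},s)$ in Type~A and as $L(\bar\psi\chi,s)L(\bar\psi\chi^c,s)$ in Type~B, while the period lemma proved above identifies the ratios $\Omega_+/\Omega_\infty$ and $\Omega_-/\Omega_\infty$ with the units $\alpha,\alpha\tau\in\O_{\mathfrak p}^\times\cap K\subseteq L(\rho)^\times$.

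Next, I would observe that the explicit calculation carried out inside the proof of the preceding proposition already shows that, up to the global unit $\mathcal L_\Omega(\check\rho)$ and up to the product $\prod_{q\in R\setminus\{p\}}P_q(\rho,q^{-1})$ of Euler factors at the bad primes other than $p$, the expression in the statement is precisely $\mathcal L(\check\rho)$, where $\mathcal L$ is the image of $\mathcal L_{\bar\psi}\in\La_D(G)$ in $\La_D(\G)_{S^*}$ under the trivial-extension map of Lemma~\ref{extended measure}. Those missing Euler factors are polynomials in $q^{-1}$ with coefficients in $\Z[\rho]\subseteq L(\rho)$, hence lie in $L(\rho)$ and are $p$-adic units; and $\mathcal L_\Omega(\check\rho)\in\O_{\mathfrak p}^\times\cap K$. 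Thus the desired algebraicity and $p$-adic integrality reduce to the corresponding statement for $\mathcal L(\check\rho)=\mathcal L_{\bar\psi}(\mathrm{Res}^\G_G\check\rho)$.

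I would then split this residual claim into its two halves. The $p$-adic integrality is built into the construction of $\mathcal L_{\bar\psi}$ in Corollary~\ref{measure1} as a $D$-valued measure on $G$: by definition its value on any finite-order character of $G$ lies in $D[\chi]\subseteq\O_{\mathbb C_p}$, and is in particular a $p$-adic integer. For the algebraicity part I would appeal to the Shimura-Katz algebraicity theorem for special values of Hecke $L$-functions of CM type, i.e.\ the Deligne conjecture for CM motives proved in this setting by Blasius: after dividing by $\Omega_\infty^{k-j}$ and multiplying by the Gauss-sum factor as in Theorem~\ref{measureKatz}, the quantity $L_{\bar{\mathfrak p}\mathfrak f}((\bar\psi\chi)^{-1},0)$ already lies in the field $K(\chi,\psi)\subseteq L(\rho)$. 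Combining the two yields $\mathcal L(\check\rho)\in L(\rho)\cap\O_{\mathbb C_p}$, which is what is needed.

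The step I expect to be most delicate is the book-keeping of the two competing period normalisations. One must check that the Katz $p$-adic period $\Omega_p\in\zpnr^\times$ cancels cleanly against the $\zpnr$-coefficients of the Katz measure, so that the output of $\mathcal L_{\bar\psi}$ at a finite-order character lies in $L(\rho)$ itself and not merely in $L(\rho)\cdot\zpnr$; equivalently, one must verify that the twists by $\delta\,\sigma_\delta^{-1}$ and by the components $\psi_\Delta\theta$ introduced in the construction of $\mu_\theta$ during the proof of Corollary~\ref{measure1} do not contribute elements outside $L(\rho)$. Once this is confirmed by re-examining the automorphic-versus-Galois normalisation conventions fixed earlier in the section, the two halves combine to give the lemma.
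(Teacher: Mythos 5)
There is a genuine gap, and it sits exactly at the step you flag as routine. After reducing to $\mathcal{L}(\check\rho)=\mathcal{L}_{\bar\psi}(\mathrm{Res}^\G_G\check\rho)$, you assert that for a single finite-order character $\chi$ of $G$ the quantity $L_{\bar{\mathfrak p}\mathfrak f}((\bar\psi\chi)^{-1},0)$, divided by the appropriate power of $\Omega_\infty$ and multiplied by the Gauss-sum factor, ``already lies in $K(\chi,\psi)\subseteq L(\rho)$.'' This is false in general, and the paper itself explains why in the closing discussion of Section 2: what Blasius proves is that $L(\bar\psi\chi,1)\,\Omega_\infty^{-1}\,c(\bar\psi,\chi)^{-1}\in K(\chi)$, where $c(\bar\psi,\chi)$ is the Deligne period attached to the Hecke character $\bar\psi\chi$, and this period is \emph{not} equal to the epsilon factor $e_{\mathfrak p}(\bar\chi)$ up to $K(\chi)^\times$ except when $\chi^c=\chi$. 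Indeed the field generated over $K$ by the $c(\bar\psi,\chi)$ is an abelian extension of $K$ not contained in $\Q^{ab}$, whereas Gauss sums only generate subfields of $\Q^{ab}$. So the individual value $\mathcal{L}_{\bar\psi}(\bar\chi)$ does not lie in $L(\chi)$, and your reduction to it cannot close the argument; this is precisely the reason the measure lives over $D=\widehat{\Z_p^{nr}}$ rather than over $\O$ and why the descent theorem in the appendix is needed.

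The paper's proof avoids this by never separating the conjugate pair. For $\rho=\Ind\chi$ it works with the product $L(\psi,\chi,1)L(\bar\psi,\chi,1)/\Omega_\infty^2$ against the product $e_{\mathfrak p}(\chi)e_{\bar{\mathfrak p}}(\chi)=e_p(\check\rho)$ (inductivity in degree zero plus duality of epsilon factors), and the key input is the period relation $c(\psi,\chi)\,c(\bar\psi,\chi)\equiv\prod_{\mathfrak q}e_{\mathfrak q}(\chi)\pmod{L(\chi)^\times}$. This follows from Blasius' reciprocity-law characterisation of $c(\psi,\chi)$ via the half-transfer $Ver_\psi$, the identity $Ver_\psi\cdot Ver_{\bar\psi}=Ver$, and the fact that the epsilon factors at the primes dividing $\mathfrak f_\psi$ are themselves in $L(\chi)^\times$ (via $e_q(\rho)=e_q(\det\rho)$ up to $\Q(\rho)^\times$ and the Gauss-sum description of abelian epsilon factors). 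None of this appears in your argument, and without it the algebraicity claim does not follow; your closing paragraph correctly senses that the normalisation of $\Omega_p$ versus $\Omega_\infty$ is where the difficulty lies, but the resolution is not a bookkeeping check — it is the genuinely two-variable period relation above, which only holds for the paired product. (Your treatment of the $p$-integrality half and the reduction to Types A and B is fine and matches the paper.)
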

\begin{proof}That the values are $p$-integral follows from the fact
that are obtained as values of a $p$-integral valued measure. So we
need to prove that actually the values are in $L(\rho)$ and of
course it is enough to prove it for $\rho$ an irreducible Artin
representation. Note that as $p$ splits we can identify $\Q_p =
K_{\mathfrak{p}}$. Moreover we have that $u,w \in \Q_p$ hence we
need simply to prove that
\[
\frac{L(E, \rho, 1)}{\Omega_+^{d^+ (\rho)} \Omega_-^{d^-(\rho)}}\,
e_p(\check{\rho}) \in L(\rho)
\]
If $\rho$ is one dimensional then this is well known. Hence we
consider the case where $\rho = Ind_{\mathcal{G}}^G\chi$. Then the
above statement is equivalent to
\[
\frac{L(E/K, \chi, 1)}{\Omega_\infty^{2}}\, (e_{\mathfrak{p}}(\chi)
e_{\bar{\mathfrak{p}}}(\chi))^{-1} \in L(\chi)
\]
using the fact the $\Omega_+ = \Omega_- = \Omega_{\infty}$ up to
elements in $K^\times$, $e_{\mathfrak{p}}(\bar{\chi})
e_{\bar{\mathfrak{p}}}(\bar{\chi})=e_p(\check{\rho})$ and the
duality of the epsilon factors. Moreover we have $L(E/K, \chi,
1)=L(\psi,\chi,1)L(\bar{\psi},\chi,1)$. From Blasius proof of
Deligne's conjecture for Hecke characters (of CM fields)
\cite{Blasius} we know that
\[
\frac{L(\psi,\chi,1)}{\Omega_{\infty}}c(\psi,\chi)^{-1} \in
K(\chi),\,\,\,\frac{L(\bar{\psi},\chi,1)}{\Omega_{\infty}}c(\bar{\psi},\chi)^{-1}
\in K(\chi)
\]
where $c(\psi,\chi)$ and $c(\bar{\psi},\chi)$ are defined as in
\cite{Blasius2} (see page 65 for the definition and page 67 for the
factorization of Deligne's periods for the Hecke characters
$\psi\chi$ and $\bar{\psi}\chi$). Moreover we have that
$c(\psi,\chi)c(\bar{\psi},\chi)=e_{\mathfrak{p}}(\chi)
e_{\bar{\mathfrak{p}}}(\chi)$ up to elements in $L(\chi)^\times$.
This can be seen from the definition of the periods $c(\psi,\chi)$
and $c(\bar{\psi},\chi)$. From \cite[page 65]{Blasius2} we have that
the element $c(\psi,\chi)$ is an element in $K(\chi)
\otimes_{\mathbb{Q}} \bar{\mathbb{Q}}$ characterized from the
reciprocity law
\[
(1 \otimes \tau) c(\psi,\chi) = (\chi \circ Ver_{\psi})(\tau)
c(\psi,\chi)
\]
where $Ver_{\psi}:G_{\Q} \rightarrow G^{ab}_{K}$ is the
half-transfer map of Tate associated to the CM-type of the character
$\psi$. As it is explained by Blasius this reciprocity law
characterizes the element $c(\psi,\chi)$ up to elements in
$K(\chi)^{\times}$. From the properties of the half-transfer map of
Tate one has that
\[
Ver_{\psi}(\tau) \cdot Ver_{\bar{\psi}}(\tau) = Ver(\tau), \,\,\tau
\in G_{\Q}
\]
where $Ver:G_{\Q} \rightarrow G^{ab}_{K}$ is the classical transfer
map. But then as is shown in \cite[page 70]{Blasius2}, the product
of the all epsilon factors $\prod_{\mathfrak{q}}
e_{\mathfrak{q}}(\chi)$ has exactly the same reciprocity law as the
product $c(\psi,\chi)c(\bar{\psi},\chi)$ with respect the operation
of $G_\Q$ and hence they are equal up to elements in
$K(\chi)^\times$. The characters $\chi$ that we consider are only
ramified at $p$ or at the primes $\mathfrak{q}$ that divide
$\mathfrak{f}_\psi$. But for the epsilon factors
$e_{\mathfrak{q}}(\chi)$ for $\mathfrak{q} | \mathfrak{f}_{\psi}$ we
have that $e_{\mathfrak{q}}(\chi)=e_{q}(\rho)$ up to elements in
$K(\chi)^{\times}$ where $q:=\mathfrak{q} \cap \mathbb{Q}$. But
moreover it is well known \cite[page 330]{Deligne} that $e_q(\rho) =
e_q(det(\rho))$ up to element in $\mathbb{Q}(\rho)^{\times}$. But
$e_q(det(\rho))$ is a Gauss sum hence an element in
$L(\rho)^{\times}$ see \cite[pages 103 and 104]{schappacher}. Hence
we conclude that also $e_{\mathfrak{q}}(\chi) \in L(\chi)^{\times}$.
Hence putting all the above considerations together we conclude that
$e_{\p}(\chi)e_{\bar{\p}}(\chi)$ is equal up to elements in
$L(\chi)^{\times}$ to $c(\psi,\chi)c(\bar{\psi},\chi)$.
\end{proof}

We finish this section with a last comment. It seems natural to ask
if the measure $\mathcal{L}_{\bar{\psi}}$ has already values in
$\mathbb{Z}_p$ or could be suitably modified by a unit in the
Iwasawa algebra $\Lambda_{D}(G)$ to still interpolate the critical
values and take values in $\mathbb{Z}_p$. Let us first try to
explain why we believe that this measure does not take values in
$\mathbb{Z}_p$. We recall its interpolation property:
\[
 \mathcal{L}_{\bar\psi} { (\bar\chi) } = \mathop{\int}\limits_{G}
 { \bar\chi } d\mu = \frac{L ({\bar\psi} \chi, 1)}{{\Omega_\infty}} \,
 e_{\mathfrak{p}}(\bar{\chi})
 P_{\frak{p}}({\bar\chi}, u^{-1}) P_{\bar{\frak{p}}} ({ \chi},
 u^{-1})\,\,u^{-\mathfrak{f_{\mathfrak{p}}}(\bar{\chi})}
 \]
 for all Artin-character $\chi$ of $G.$ Now we note that in order to
 prove that this measure lies in $\Lambda_{\mathbb{Z}_p}(G)$ is
 equivalent to show the following rationality result
 \[
 \frac{L ({\bar\psi} \chi, 1)}{{\Omega_\infty}} \,
 e_{\mathfrak{p}}(\chi)^{-1} \in K(\chi)
 \]
since we are in the ordinary case i.e.
$K_{\mathfrak{p}}=\mathbb{Q}_p$. As we explained in the proof above
we know from Blasius work that
\[
\frac{L(\bar{\psi}\chi,1)}{\Omega_{\infty}}c(\bar{\psi},\chi)^{-1}
\in K(\chi)
\]
hence one needs to understand if
$e_{\mathfrak{p}}(\chi)=c(\bar{\psi},\chi)$ up to elements in
$K(\chi)^\times$ for finite characters $\chi$ of $G$. However this
cannot be the case for all finite characters of $Gal(K^{ab}/K)$.
(However we remark that it is the case when $\chi$ is cyclotomic
i.e. $\chi^c=\chi$.) Indeed from Blasius \cite{Blasius2} (page 66)
we have that the extension of $K$ defined by adjoining to $K$ the
values $c(\bar{\psi},\chi))$ for all finite order characters of
$Gal(K^{ab}/K)$ is an abelian extension of $K$ not included in
$\mathbb{Q}^{ab}$. However the epsilon factors are just Gauss sums
hence they can generate only extensions in $\mathbb{Q}^{ab}$. In
particular we have that the two ``periods'' cannot be equal up to
elements in $K(\chi)^\times \subseteq \mathbb{Q}^{ab}$ (see also the
comment in \cite{schappacher} page 109). Concerning the other
question one may speculate that there is a measure
$\mathcal{L}^{?}_{\bar{\psi}}$ of $G$ such that
\[
\mathcal{L}^{?}_{\bar\psi} { (\bar\chi) } = \mathop{\int}\limits_{G}
 { \bar\chi } d\mu = \frac{L ({\bar\psi} \chi, 1)}{c_p(\bar{\psi},\chi){\Omega_\infty}} \,
 P_{\frak{p}}({\bar\chi}, u^{-1}) P_{\bar{\frak{p}}} ({ \chi},
 u^{-1})\,\,u^{-\mathfrak{f_{\mathfrak{p}}}(\bar{\chi})}
\]
for some canonically normalized element $c_p(\bar\psi,\chi)$ equal
to $c(\bar\psi,\chi)$ up to elements in $K(\chi)^\times$. This
measure would have values in $\mathbb{Z}_p$.

\section{CM-modular forms}

In this section we would like to indicate how most of our results
can be extended to the case of CM-modular forms. Our reference for
the theory of CM-modular forms is Ribet's aricle \cite{ribet} as
well as the last section of Schappacher's book \cite{schappacher}.

We start by fixing our setting. Let $f$ be cuspidal newform of
weight $k \geq 2$, Nebentypus $\epsilon_{f}$ and level $N$. We pick
a number field $F$ that contains all $a_n$ of $f=\sum_{n \geq 1}a_n
q^n$. We want now to formulate a $GL_2$-Main Conjecture for $f$. Our
starting point is the following theorem.

\begin{thm}[Eichler-Shimura-Deligne-Scholl-Jannsen] There exists a motive
$M(f)$ defined over $\Q$ with coefficients in $F$ of rank two over
$F$ such that
\[
L^{*}_{N}(M(f),s):=(\prod_{(p,N)=1 }det_{F}(1-Fr_p\,p^{-s}|
H_{\ell}(M(f))^{-1})_{\tau}=(\sum_{(n,N)=1}a_n^{\tau}n^{-s})_{\tau}
\]
where $\tau \in Hom(F,\mathbb{C})$, $\ell \neq p$ prime, $Fr_p$ is a
geometric Frobenius element at $p$ and $Re(s)\gg 0$
\end{thm}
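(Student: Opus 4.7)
The plan is to construct $M(f)$ as a direct summand, cut out by an idempotent, of the motive of a Kuga--Sato variety over a modular curve, and to identify its $L$-function via the Eichler--Shimura congruence relations. Concretely, I would start from the universal (generalized) elliptic curve $\pi:\mathcal{E}\to Y_1(N)$ with level-$N$ structure, and form the $(k-2)$-fold fiber product $W=\mathcal{E}^{k-2}$, compactified to a smooth projective Kuga--Sato variety $\widetilde W$ of dimension $k-1$ by resolving the boundary as done by Deligne and made motivically precise by Scholl.

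Next I would produce the projector. On $\widetilde W$ there acts $\mu_2^{k-2}\rtimes S_{k-2}$ fibrewise, plus the diamond operators and the Hecke correspondences $T_p$ for $p\nmid N$. I would define the idempotent $\varepsilon\in\mathrm{CH}^{\dim}(\widetilde W\times\widetilde W)\otimes\mathbb{Q}$ as the product of the sign character for the semidirect product (which on each fibre extracts the $\mathrm{Sym}^{k-2}H^1$ component of the elliptic curve), times the projection to the newform part with respect to the Hecke algebra acting via the Nebentypus-eigenspace for $\epsilon_f$. Scholl's theorem guarantees that this idempotent is represented by an algebraic cycle, and so $M(f):=(\widetilde W,\varepsilon)$ exists as a Chow motive over $\Q$; tensoring with $F$ through the action of the Hecke algebra $\T\otimes F\twoheadrightarrow F$ (with kernel the maximal ideal attached to $f$) yields the rank-$2$ $F$-motive required.

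To compute its $L$-function I would pass to the $\ell$-adic realization $H_\ell(M(f))=\varepsilon^* H^{k-1}_{\acute{e}t}(\widetilde W_{\bar\Q},\Q_\ell)$. By the Lefschetz trace formula applied to $T_p$ for $p\nmid N\ell$, together with the Eichler--Shimura relation $T_p=\mathrm{Fr}_p+p^{k-1}\epsilon_f(p)\mathrm{Fr}_p^{-1}$ on the $\mathrm{Sym}^{k-2}$-part, I would identify the characteristic polynomial of $\mathrm{Fr}_p$ on $H_\ell(M(f))$ with $1-a_pX+\epsilon_f(p)p^{k-1}X^2$, at least after applying each embedding $\tau:F\hookrightarrow\mathbb{C}$. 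Taking the Euler product over primes prime to $N$ then recovers the Dirichlet series of $f^\tau$ away from $N$, which is the stated formula for $L^\ast_N(M(f),s)$.

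The genuine obstacle is the construction of the motive itself (as opposed to only its realizations): one needs the projector $\varepsilon$ to be algebraic and, in higher weight, one needs to control the boundary of $\widetilde W$ so that the relevant piece of $H^{k-1}$ actually lives inside $H^{k-1}(\widetilde W)$ and not just in parabolic/interior cohomology. This is exactly the content of Scholl's construction of the motive of modular forms (building on Deligne's $\ell$-adic construction and on Jannsen's formalism of motives with coefficients), and I would invoke \cite{Scholl} at this point rather than reprove it. Everything else, namely the identification of Euler factors, is then a bookkeeping exercise using the Eichler--Shimura relation and compatibility with the Hecke action through the embeddings $\tau\in\Hom(F,\mathbb{C})$.
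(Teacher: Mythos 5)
Your proposal is correct and is essentially the route the paper itself takes: the paper's ``proof'' is a one-line pointer to \cite{schappacher}, p.~139, and the references there, which is precisely the Deligne--Scholl construction of the motive as the piece of the cohomology of a compactified Kuga--Sato variety cut out by the group-theoretic projector onto $\mathrm{Sym}^{k-2}H^1$ of the fibres together with the Hecke/newform projector, with the Euler factors identified via Eichler--Shimura. Your sketch is a faithful outline of that construction, correctly isolating the genuinely hard points (algebraicity of the projector and control of the boundary cohomology) and deferring them to Scholl, exactly as the paper defers them to its references.
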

\begin{proof} We refer to the book of Schappacher \cite{schappacher} in page 139 and
the references there.
\end{proof}

It is known \cite[page 240, 14.10]{kato} that the Kummer dual
$M(f)^*(1)$ of $M(f)$ is isomorphic to $M(f^*)(k),$ where
$f^*=\sum_{n \geq 1}\bar{a_n} q^n$ is the dual cusp form.

We fix  an embedding $\tau : F \hookrightarrow \bar{\mathbb{Q}},$
we let $p\geq 5$ be a rational prime and let $\lambda$ be the prime
of $F$ above $p$ corresponding to our fixed embedding $ F
\hookrightarrow \bar{\q}\hookrightarrow\bar{\qp}.$ We write
$\rho_{\lambda}:=\rho_{f,\lambda}:G_{\Q} \rightarrow
GL_{2}(F_{\lambda})$ for the  associated $G_\q$- representation
given by the $\lambda$-adic realisation $V:=V(f):=V_{F_\lambda}(f)$
of $M(f)$. Moreover we assume that $p$ is a {\it good ordinary}
prime for $M(f)$, this is equivalent to $p$ being relative prime to
$N$ and $a_p$ a $\lambda$-adic unit. Note that with $f$ also $f^*$
is good ordinary at $p,$ see \cite[prop.\ 17.1]{kato}.

 We consider the $p$-adic Lie
extension $F_\infty$ of $\mathbb{Q}$ determined by the image of
$\rho_{\lambda}$, i.e. $\mathcal{G}:=Gal(F_{\infty}/\Q) \cong
Im(\rho_{\lambda})$. We note that the determinant of
$\rho_{\lambda}$ is of the form \[\det(\rho_{\lambda}) \cong
\chi_{cycl}^{1-k}\epsilon_{f}^{-1}\] and hence $\mathcal{G}$
contains a closed normal subgroup $\mathcal{H}$ such that
$\mathcal{G}/\mathcal{H} \cong \zp$. In particular the setting of
Theorem 1.1 of the introduction apply to the above defined group
$\mathcal{G}$.

Next we define  the $\lambda$-primary Selmer group attached to $f$
as follows: By $V'(f)$ we denote the (unique) unramified one
dimensional $G_\qp$-subrepresentation of $V(f)$ (restricted to
$G_\qp$), which exists due to \cite[prop.\ 17.1]{kato} $f$ being
good ordinary at $\lambda.$ We fix an $\O:=\O_{F_\lambda}$-lattice $
T(f)\subseteq V(f)$ and set $T'(f):=T(f)\cap V'(f)$ and
$T''(f):=T(f)/T'(f)\subseteq V''(f):=V(f)/V'(f).$  Then, for any
integer $r,$ we define
\[Sel^{ord}(T(f)(r)/F_\infty):=\ker\big(\H^1(G_{S_p}(F_\infty),T(f)(r)\otimes \q/\z)\to \Coind^{\G_p}_\G \H^1(F_{\infty,\nu},T''(f)(r)\otimes \q/\z)\big)\]

where $G_{S_p}(F_\infty)$ denotes the Galois group of the maximal
outside $p$ unramified extension of $F_\infty$ and $\nu$ is a fixed
place of $F_\infty$ over $p.$  Using the same arguments as in the
proof of \cite[prop.\ 17.2]{kato} one easily shows that this Selmer
group coincides with the Bloch-Kato Selmer group
$Sel_{(1)}(T(f)(r)\otimes \q/\z,F_\infty)$ in
\cite[4.2.28]{fukaya-kato}. Finally, we write
  \[X:=X(T(f)(r)/F_{\infty}):=Sel^{ord}(T(f)(r)/F_{\infty})^{\vee}\]  for its
Pontryagin dual. Then the Torsion-Conjecture reads as follows

\begin{conj}[Torsion-Conjecture]\label{torConj2}
For one (and hence any) $r,$ the dual of the Selmer group is
$S^*$-torsion:
\[X(T(f^*)(r)/F_\infty)\in\mathfrak{M}_\mathcal{H}(\G).\]
\end{conj}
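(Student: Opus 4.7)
The plan is to adapt the strategy of Remark \ref{torConjThm} from the CM-elliptic-curve case to the higher weight setting. By the CM hypothesis on $f$, there exist an imaginary quadratic field $K$ (in which $p$ splits as $\mathfrak{p}\bar{\mathfrak{p}}$ by the ordinariness assumption) and a Gr\"ossencharacter $\psi$ of $K$ of infinity type $(k-1,0)$ and conductor $\mathfrak{f}_\psi$ prime to $p$ such that $\rho_\lambda\cong \mathrm{Ind}_{G_K}^{G_\Q}(\psi_\lambda)$, where $\psi_\lambda$ denotes the $\lambda$-adic Galois character attached to $\psi$. Consequently $F_\infty$ contains the compositum $K_\infty$ of all $\zp$-extensions of $K$, and writing $G:=\mathrm{Gal}(K_\infty/K)$ yields a subgroup of index two in $\G$, exactly as in the elliptic-curve situation treated in Section 2.

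First I would establish an analog of Proposition \ref{Selmerind}, namely a natural $\La(\G)$-isomorphism
\[X(T(f^*)(r)/F_\infty)\cong \mathrm{Ind}_\G^G\bigl(\mathcal{X}(K_\infty)\otimes_\zp T_\pi^*(f^*,r)\bigr),\]
where $T_\pi(f^*,r)$ is the rank-one $\O$-lattice on which $G_K$ acts through the appropriate $\lambda$-adic Hecke character obtained from $\psi$ (suitably conjugated to match $f^*$ and Tate-twisted by $r$). The proof follows the template of Proposition \ref{Selmerind}: the ordinarity of $f^*$ at $p$ identifies the $\pi$-part of the Selmer group over $K_\infty$ with a cohomology group of $\mathcal{X}(K_\infty)$-type, the local component above $\bar{\mathfrak{p}}$ vanishes by the argument of \cite[Chapter II]{deSh}, and coinducting from $G$ to $\G$ reassembles the full ordinary Selmer group whose Pontryagin dual is $X(T(f^*)(r)/F_\infty)$.

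With this isomorphism in hand it suffices to show that $\mathcal{X}(K_\infty)\otimes_\zp T_\pi^*(f^*,r)$ is $S$-torsion as a $\La(G)$-module; the base-change diagram displayed just before Lemma \ref{extended measure} then transports this to $S^*$-torsionness over $\La(\G)$, and induction preserves the property of being in $\mathfrak{M}_\mathcal{H}(\G)$. Two ingredients suffice: (a) the theorem of Perrin-Riou \cite[th\'eor\`eme 2.4]{perriou} that $\mathcal{X}(K_\infty)$ contains no non-trivial pseudo-null $\La(G)$-submodules, a property preserved by twisting with a continuous character of rank one, and (b) the vanishing of the $\mu$-invariant of $\mathcal{X}(K_\infty)\otimes_\zp T_\pi^*(f^*,r)$, which via Rubin's two-variable Main Conjecture is equivalent to the $\mu=0$ statement for the appropriate specialisation of the Katz measure produced in Theorem \ref{measureKatz}. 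In the weight-two case this $\mu$-vanishing was the theorem of Schneps \cite{schneps} combined with the descent of Hatchimori--Venjakob \cite[theorem 5.3]{HatchVenj}; the same package, applied after the finite-order twist by the Nebentypus of $f$, should yield the higher-weight conclusion.

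The main obstacle is the $\mu$-invariant vanishing in step (b): one must verify that the finite-order twist encoding the Nebentypus character of $f$ (and the Tate twist by $r$ built into $T_\pi^*(f^*,r)$) does not re-introduce a non-trivial $\mu$-contribution along the cyclotomic direction; equivalently, one has to check that the specialisation of the Katz measure by the restriction of $\psi$ to $\Delta$ retains the $\mu=0$ property established in \cite{schneps}. The independence of $r$ claimed in the statement is then a formality, since a change of Tate twist multiplies the characteristic element by a unit of $\La(\Gamma)\subseteq\La(\G)$ and hence cannot affect membership in $\mathfrak{M}_\mathcal{H}(\G)$.
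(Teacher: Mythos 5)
This statement is a \emph{conjecture} in the paper, not a theorem: the paper offers no proof of it, explicitly lists it as an ``Assumption'' in the elliptic-curve case, and makes its final results conditional on it (``Assuming Conjectures \ref{torConj2} and \ref{ConjPer} \dots''). So your proposal is not reproducing a proof from the paper but attempting to settle an open hypothesis, and it contains a genuine logical gap. Your reduction via the induction isomorphism is fine --- it is exactly Proposition \ref{twist} --- and the independence of $r$ is indeed a formality, since twisting by a power of the cyclotomic character is a ring automorphism of $\La(\G)$ preserving $\mathcal{H}$ and $S^*$. The problem is your claim that ingredients (a) and (b) ``suffice.''

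Ingredients (a) (no pseudo-null submodules, Perrin-Riou) and (b) ($\mu=0$, Schneps plus descent) are precisely the inputs of Remark \ref{torConjThm}, whose logic runs in the \emph{opposite} direction: it \emph{presupposes} $X\in\mathfrak{M}_\mathcal{H}(\G)$ (i.e.\ $S^*$-torsion) and uses (a) and (b) only to upgrade this to $S$-torsion, by showing $X(p)=0$ so that $X=X/X(p)$ is finitely generated over $\La(\mathcal{H})$ \emph{by the assumed conjecture}. These two facts do not establish membership in $\mathfrak{M}_\mathcal{H}(\G)$ to begin with. Concretely, writing $\La(G)\cong\La(H)\kl T\kr$ (up to the prime-to-$p$ finite part), the module $\La(G)/(h)$ for a non-unit $h\in\La(H)$ prime to $p$ is finitely generated and torsion over $\La(G)$, has trivial $\mu$-invariant, is $p$-torsion free, and has no nonzero pseudo-null submodule, yet it is not finitely generated over $\La(H)$. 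What is actually needed --- via Rubin's and Yager's theorem --- is that the (twisted) Katz measure $\mathcal{L}_{\bar\psi}$ lies in $S$, i.e.\ is divisible by no non-unit of $\La(H)$; equivalently, that no specialisation of the two-variable $p$-adic $L$-function along the cyclotomic direction vanishes identically. Neither Perrin-Riou's structural result nor the $\mu=0$ theorem addresses this non-vanishing, and it is exactly this point that the paper declines to claim. (A secondary, lesser issue is that Perrin-Riou's and Schneps' results are stated for elliptic curves, and their extension to the twist by a Gr\"o{\ss}encharacter of type $(k-1,0)$ and by the Nebentypus would itself require justification; but even granting that, the argument does not close.)
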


We let $L$ denote the same field as in the introduction in our
setting and we write $\Lambda(\G)$ for the Iwasawa algebra of $\G$
with coefficients in $D:=\mathcal{O}_L$. We denote by $\Omega_{\pm}
$
 the periods of Deligne associated to $M(f)$ with respect to our fixed    $\tau \in Hom(F,\mathbb{C}),$  that is determined up to elements in $O_{F}^\times,$ see below for more details. We
set
\[
R=\{p\}\cup\{l\neq p| \mbox{   the ramification index of $l$ in
$F_\infty/\Q$ is infinite}\}
\]
and we define $u \in \zp^\times$ by
\[
1-a_pT+p^{k-1}\epsilon_{f}(p)T^2=(1-uT)(1-wT)
\]

\begin{conj}[Existence of $p$-adic $L$-function]\label{Conj2.2}
There is a   $ \mathcal{L}_f \in K_1(\Lambda (\G)_{ S^{\normalsize
*}}) $ such that
\[
\mathcal{L}_f (\rho) = \frac{L_R(M(f), \mathop{\rho}\limits^{\vee},
1)}{\Omega_+^{d^+ (\rho)} \Omega_-^{d^-(\rho)}}\, e_p(\rho)
\frac{P_p(\rho, u^{-1})} {P_p(\mathop{\rho}\limits^{\vee}, w^{-1})}
u^{-{\frak{f}}_p (\rho)}
\]
for all Artin representations $\rho$ of $\G.$
\end{conj}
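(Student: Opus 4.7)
The plan is to adapt the argument of Section 2 wholesale. Since $f$ is a CM newform there exist an imaginary quadratic field $K$ and an algebraic Hecke character $\psi$ of $K$ of infinity type $(k-1,0)$ with $\rho_{f,\lambda}\cong\Ind_K^{\Q}\psi$; again $F_\infty$ contains the $\Z_p^2$-extension $K_\infty/K$ and $\G=\Gal(F_\infty/\Q)$ sits in an extension of $\Delta=\Gal(K/\Q)$ by the abelian group $G=\Gal(K_\infty/K)$. The whole construction of $\mathcal{L}_f$ should therefore reduce to the 2-variable Iwasawa theory of $K$.

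First I would prove a higher-weight analogue of Proposition \ref{Selmerind} identifying the dual Selmer group $X(T(f^*)(r)/K_\infty)$, at a critical $r$, with $\Ind_\G^G$ of $\mathcal{X}(K_\infty)$ suitably twisted by the Galois character corresponding to $\bar\psi\chi_{cycl}^{r}$ (i.e.\ with coefficients in $T_{\bar\pi}^*$ tensored with a Tate twist). The proof is formally the same as for elliptic curves and uses Shapiro, the ordinariness hypothesis (which identifies the unramified quotient $V''(f)|_{G_{\Qp}}$ with one of the characters attached to $\psi,\bar\psi$ at the two primes above $p$), and the vanishing of the local $H^1$ at the other prime. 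Combined with Theorem \ref{measureKatz}, now applied at infinity types $(k,j)$ with $0\le -j<k$ so as to capture the critical values of $L(f\otimes\chi,\cdot)$, this yields a measure $\mathcal{L}_{\bar\psi}\in\La_D(G)$ built exactly as in Corollary \ref{measure1}. Rubin's two-variable Main Conjecture then supplies $(\iota_S)_*(\mathcal{L}_{\bar\psi})\in K_1(\La(\G)_{S^*})$ as a characteristic element of $[X(T(f^*)(r)/F_\infty)]$; the required $S^*$-torsion follows, as in Remark \ref{torConjThm}, from Conjecture \ref{torConj2} via descent.

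The values of this element at Artin representations of $\G$ are computed as in Lemma \ref{extended measure}, splitting $\rho$ into type A (one-dimensional) and type B ($\rho=\Ind_G^\G\chi$ with $\chi\neq\chi^c$) and using the inductivity of $L$-functions, $\epsilon$-factors and Euler factors together with the split behavior of $p$ in $K$. The outcome differs from the expected interpolation formula in Conjecture \ref{Conj2.2} only by a factor comparing the CM period $\Omega_\infty^{k-1}$ with the Deligne periods $\Omega_+^{d^+(\rho)}\Omega_-^{d^-(\rho)}$. As in Section 2 one absorbs this discrepancy into a unit $\mathcal{L}_\Omega\in\La_\mathcal{O}(\G)^\times$ of the shape $\alpha_+\tfrac{1+c}{2}+\alpha_-\tfrac{1-c}{2}$ with $\alpha_\pm\in\mathcal{O}_\mathfrak{p}^\times\cap K(\rho)$, and sets $\mathcal{L}_f:=(\iota_S)_*(\mathcal{L}_{\bar\psi})\cdot\mathcal{L}_\Omega^{-1}$. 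Finally the descent machinery of Theorem \ref{L-descent} in the appendix, combined with a Deligne-Blasius-type algebraicity statement for $M(f)\otimes\rho$, replaces the coefficient ring $D$ by $\mathcal{O}$.

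The main obstacle is the period comparison in weight $k>2$: the N\'eron-differential argument of Lemma 2.9 is no longer available, and one has to control the ratios $\Omega_\pm/\Omega_\infty^{k-1}$ (in the appropriate power) via Blasius's proof of Deligne's conjecture for Hecke $L$-values of CM fields and the factorization of Deligne's periods for tensor inductions of CM Hecke characters. Once this provides the $\alpha_\pm$ as $p$-adic units up to scalars in $K(\rho)^\times$, the verification of Conjecture \ref{Conj2.2} reduces to the essentially formal bookkeeping parallel to the elliptic-curve case; the $p$-integrality of the interpolated values is automatic from their origin as integrals of a $\La_D(G)$-valued measure.
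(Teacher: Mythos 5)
Your overall strategy is the paper's: reduce to the two-variable theory over $K$ via $\rho_{f,\lambda}\cong\Ind_G^{\G}\psi_\lambda$, build the measure from Theorem \ref{measureKatz}, push forward along $\La(G)_{S^*}\to\La(\G)_{S^*}$, correct the period, and descend the coefficient ring via Theorem \ref{L-descent}. But there is a genuine gap at precisely the step you flag as ``the main obstacle'': the period comparison. For $\mathcal{L}_\Omega^{-1}$ to exist in $K_1$ one needs $\mathcal{L}_\Omega\in\La_{\O}(\G)^\times$, i.e.\ the \emph{exact} unit statement $(2\pi)^{k-2}\Omega_{\pm}/\Omega_\infty^{k-1}\in\O^\times$. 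Blasius's theorem on Deligne's conjecture for Hecke characters controls such ratios only up to elements of a number field; a quantity that is ``a $p$-adic unit up to scalars in $K(\rho)^\times$'' (your phrase) need not be a $p$-adic unit, and a non-unit scalar would destroy both the invertibility of $\mathcal{L}_\Omega$ and the integrality needed for the descent of Theorem \ref{L-descent}. In the elliptic-curve case the unit statement is proved by an explicit lattice computation with the N\'eron differential, which, as you correctly note, is unavailable for $k>2$; for $k\geq 3$ the paper cannot prove it either --- the obstruction being that the motive $M(f)$ cut out of a Kuga--Sato variety is not known to agree integrally at $p$ with the $(k-1)$-fold tensor power motive of the CM elliptic curve attached to the auxiliary character $\phi$ of type $(1,0)$ --- and has to postulate it as Conjecture \ref{ConjPer}. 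The existence of $\mathcal{L}_f$ is therefore obtained only conditionally on Conjectures \ref{torConj2} \emph{and} \ref{ConjPer}, whereas your write-up presents the period step as something Blasius's results complete.

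Two smaller points. Since $\psi^c=\bar\psi\,(\epsilon_f\circ N_K)$ rather than $\bar\psi$, the motive $M(f)$ is not self-dual in general, and the Katz measure naturally interpolates $L(\bar\psi\chi,k-1+j)$; to land on $L(M(f),\check\rho,1)$ as demanded by Conjecture \ref{Conj2.2} one must pass through the functional equation and verify that the resulting product of $\epsilon$-factors away from $p$ assembles into a \emph{unit} of $\La_D(G)$ --- this is the content of Corollary \ref{p-LCM2} and is not automatic from ``the same construction as Corollary \ref{measure1}''. Finally, the construction uses the standing assumption that the torsion subgroup of $G$ has order prime to $p$, which your argument should record.
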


And similarly,

\begin{conj}[Main Conjecture]\label{Conj3.2}
The $p$-adic $L$-function $\mathcal{L}_f$ is a characteristic
element of $X(T(f^*)(k-1)/F_\infty):$
\[\partial \mathcal{L}_f=[X(M(f)/F_\infty)]_D.\]
\end{conj}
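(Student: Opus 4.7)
The plan is to reduce Conjecture \ref{Conj3.2} to the two-variable Main Conjecture of Rubin--Yager, following exactly the strategy pursued above for CM elliptic curves. First, by Ribet's theorem (\cite{ribet},\cite{schappacher}) the CM hypothesis on $f$ provides an imaginary quadratic field $K$ and a Hecke character $\psi$ of $K$ of infinity type $(k-1,0)$ with conductor $\mathfrak{f}_\psi,$ such that the compatible system of $\lambda$-adic Galois representations attached to $f$ satisfies $V(f)\cong\Ind_{G_K}^{G_\Q}V_\psi,$ where $V_\psi$ is the one-dimensional $\lambda$-adic realisation of $\psi.$ Since $a_p$ is a $\lambda$-adic unit, $p$ splits in $K$ as $\mathfrak{p}\bar{\mathfrak{p}}$ with $\psi$ unramified at $\mathfrak{p};$ the dual form $f^*$ corresponds to $\bar{\psi},$ so the role played by $T_\pi^*$ in the elliptic curve case is here played by $T_{\bar{\psi}}(k-1).$

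Second, the Shapiro-type argument of Proposition \ref{Selmerind} generalises essentially verbatim: the unique unramified line of $V(f)|_{G_{\Q_p}}$ is the image under induction of the $\mathfrak{p}$-unramified line in $V_\psi,$ so the local ordinary conditions match up and one obtains a canonical isomorphism of $\La(\G)$-modules
\[X(T(f^*)(k-1)/F_\infty)\cong \Ind_\G^G\bigl(\mathcal{X}(K_\infty)\otimes_\zp T_{\bar{\psi}}(k-1)\bigr).\]
Applying the Rubin--Yager Main Conjecture (with the Tate twist absorbed into the Gr\"o{\ss}encharacter $\bar{\psi}\chi_{cycl}^{k-1}$), one obtains a characteristic element in $\La_D(G)$ coming from an appropriate twist of the Katz measure of Theorem \ref{measureKatz}. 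Granted Conjecture \ref{torConj2}, the argument of Remark \ref{torConjThm} (Schneps, plus a Hatchimori--Venjakob type descent) upgrades this to $S$-torsion, and trivial extension through $\La(G)\hookrightarrow\La(\G)$ yields $\mathcal{L}\in K_1(\La_D(\G)_{S^*})$ which is automatically a characteristic element of $X(T(f^*)(k-1)/F_\infty).$

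Third, to turn $\mathcal{L}$ into a $p$-adic $L$-function satisfying the interpolation formula of Conjecture \ref{Conj2.2} with Deligne's periods $\Omega_\pm$ of $M(f),$ one must correct by the period twist
\[\mathcal{L}_\Omega := \frac{\Omega_+}{\Omega_\infty^{k-1}}\cdot\frac{1+c}{2} + \frac{\Omega_-}{\Omega_\infty^{k-1}}\cdot\frac{1-c}{2},\]
and the main obstacle is showing $\mathcal{L}_\Omega\in\La_\zp(\G)^\times.$ For weight $k=2$ this was reduced to an elementary N\'eron-differential computation; for general $k$ the corresponding integrality statement must be extracted from Blasius's theorem on Deligne's conjecture for Hecke characters of $K$ (as used in the lemma above), combined with the factorisation of Deligne periods for the induced motive $M(f)\cong\Ind_K^\Q M(\psi),$ so as to pin down the ratios $\Omega_\pm/\Omega_\infty^{k-1}$ up to factors in $\O_\mathfrak{p}^\times\cap K.$ Once this is in place, verification of the interpolation formula at type A (one-dimensional) and type B (two-dimensional induced) irreducible Artin representations of $\G$ is a purely formal manipulation of $L$-values, Euler factors and epsilon factors, proceeding identically to the CM elliptic curve computation above via inductivity of $L$-functions and inductivity in degree zero of epsilon factors. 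Finally, the descent of coefficients from $D=\zpnr$ to $\O=\O_L$ is handled by Theorem \ref{L-descent} in the appendix, whose hypotheses (algebraicity and $p$-integrality of the ratios appearing in Conjecture \ref{Conj2.2}, Galois equivariance and interpolation on a dense set) follow respectively from Blasius's work, the integrality of the Katz measure, and the construction; this yields $\mathcal{L}_f\in K_1(\La_\O(\G)_{S^*})$ satisfying both Conjecture \ref{Conj2.2} and Conjecture \ref{Conj3.2}.
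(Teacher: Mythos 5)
Your overall route is the paper's: induction from the Gr\"{o}ssencharacter $\psi$, the Shapiro-type identification of Selmer groups (Proposition \ref{twist}), Rubin--Yager, trivial extension of the measure from $\La(G)$ to $\La(\G)$, a period correction $\mathcal{L}_\Omega$, and coefficient descent via Theorem \ref{L-descent}. But there is one genuine gap. You assert that the unit property of the period ratios $(2\pi)^{k-2}\Omega_\pm/\Omega_\infty^{k-1}$ (note that you have also dropped the factor $(2\pi)^{k-2}$ from your $\mathcal{L}_\Omega$) ``must be extracted from Blasius's theorem.'' Blasius's theorem gives only algebraicity of the relevant ratios up to elements of a number field; it does not pin them down $p$-adically, and in particular does not show that they are $p$-adic units. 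The paper does not prove this step: it isolates it as Conjecture \ref{ConjPer} and explains why it is genuinely hard for $k\geq 3$ --- one would have to compare the motive $M(f)$, realised inside a Kuga--Sato variety, with the CM motive built from the $(k-1)$-fold tensor power of the elliptic curve attached to an auxiliary character $\phi$ of type $(1,0)$, and this comparison is not known. The paper's final theorem is accordingly conditional on \emph{both} Conjecture \ref{torConj2} and Conjecture \ref{ConjPer}; your proposal silently converts the second hypothesis into a claimed consequence of Blasius, which it is not.

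A second, smaller omission: for $k>2$ or nontrivial nebentypus one has $\psi^c=\bar\psi(\epsilon_f\circ N_K)$ rather than $\psi^c=\bar\psi$, so $M(f)$ is not self-dual, and the Katz measure naturally interpolates $L(\bar\psi\chi,k-1+j)$, i.e.\ values of the dual motive near $s=k-1$, not the values $L(M(f),\check\rho,1-j)$ demanded by Conjecture \ref{Conj2.2}. The paper therefore passes through the functional equation and twists by an explicit unit in $\La_D(G)$ built from the epsilon factors away from $p$ (Corollary \ref{p-LCM2}) before the interpolation at $s=1$ can be matched; ``absorbing the Tate twist into $\bar\psi\chi_{cycl}^{k-1}$'' handles the critical strip but not this dualisation. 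Neither point affects the identity $\partial\mathcal{L}_f=[X]_D$ itself --- that part of your argument, via Proposition \ref{twist} and functoriality of the localisation sequence, is sound --- but both are needed to produce an $\mathcal{L}_f$ with the stated interpolation property, which is what the paper's theorem actually delivers.
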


Actually, the last two conjectures are a consequence of the much
more general conjectures in \cite{fukaya-kato}. Indeed, theorem
4.2.22 or 4.2.26 in (loc.\ cit.) applied to the motive $M=M(f)(1)$
with coefficients in $F$ predicts the existence of $\mathcal{L}_f$
such that, for $0\leq i\leq k-2,$ the following more general
interpolation property holds (at least for $i+1\neq
\frac{k-1}{2}$)\footnote{Otherwise some Euler factors might be zero
and the formula can be rewritten by replacing $R$ by the empty set
and replacing  $\frac{P_p(\rho, u^{-1}p^i)}
{P_p(\mathop{\rho}\limits^{\vee}, up^{-i-1})}$ by
$\{P_{L,p}(W,u)P_{L,p}(\hat{W},u)^{-1}\}_{u=1}\cdot
P_{L,p}(\hat{W}^*(1),1)\cdot \prod_{l\in B} P_{L,l}(W,1),$  where
$W:=M(\rho^*)_\lambda=[\rho^*]_\lambda\otimes  M_\lambda$ and
$\hat{W}:=[\rho^*]_\lambda\otimes  \hat{V'(f)(1)}.$ }

\[
\mathcal{L}_f ( {\rho} \kappa^{-i}) = \frac{L_R(M(f),
\mathop{\rho}\limits^{\vee}, i+1)}{\Omega_+^{d^+ (\rho,i)}
\Omega_-^{d^-(\rho,i)}(2\pi \iota)^{di}}\, (i!)^d \, e_p( {\rho})
\frac{P_p(\rho, u^{-1}p^i)} {P_p(\mathop{\rho}\limits^{\vee},
up^{-i-1})} (u^{-1}p^i)^{\frak{f}_p ( {\rho})}
\]

where $d=\dim \rho$ while $d^+ (\rho,i)$ and $d^-(\rho,i)$ denote
the dimension of the part of $\rho$ n which complex conjugation acts
as $(-1)^i$ and $(-1)^{i-1},$ respectively. To this end note that
the eigenvalue $u$ of the geometric Frobenius automorphism acting on
$V'(f)$ equals $p\nu$ for the $\nu$ in (loc.\ cit.) due to the
compatibility conjecture $C_{WD}$ in \cite[2.4.3]{fontaine}, which
is known for modular forms (loc.cit., rem 2.4.6(ii)) and for Artin
motives,  and that
\[h(r):=\dim gr^r((M\otimes \rho)_{dR})=\left\{
                                                                         \begin{array}{ll}
                                                                           d, & \hbox{if $r=-1$ or $r=k-2$;} \\
                                                                           0, & \hbox{otherwise.}
                                                                         \end{array}
                                                                       \right.\]
because the de Rham realisation $M(f)(j)_{dR}$ has the following
decreasing filtration
\[M(f)(j)_{dR}^i=\left\{\begin{array}{ll}
              M(f)_{dR}, & \hbox{if $i\leq -j$;} \\
              M(f)_{dR}^1, & \hbox{if $1-j\leq i\leq k-1-j$;} \\
              0, & \hbox{if $k-j\leq i.$}
                \end{array}
                      \right.\]

see \cite[\S 11.3]{kato}. In particular, all the twists $M(f)(j),$
$1\leq j\leq k-1$ are critical. Moreover  $M(f)$ is pure of weight
$k-1$ with   Hodge decomposition of type $(k-1,0)+(0,k-1).$  The
existence of good basis $\gamma^+,$ $\gamma^-$ and $\delta$  of
$M_B^+,$ $M_B^-$ and the tangent space $t_M:=M_{dR}/M^0_{dR},$
respectively, in the sense of \cite[4.2.24]{fukaya-kato} follows
from \cite[17.5]{kato} where the dual situation is discussed, in
particular we have chosen $\Omega_\pm=\Omega(\gamma^\pm,\delta)$ in
the notation of \cite{fukaya-kato}.

Assuming the conjectures in \cite{fukaya-kato}, Conjecture
\ref{Conj3.2} is a direct consequence of theorem 4.2.22, proposition
4.3.15/16 in (loc.\ cit.) (observing that $V'(f)$ grants an infinite
residue extension of $p$ in $F_\infty/\q$) once we have seen that
$T$ induces the zero class in $K_0(\mathfrak{M}_\mathcal{H}(\G)).$
If $f$ is not CM, this follows from proposition 4.3.17 in (loc.\
cit.) while in the CM-case we give an argument in the proof of
Proposition \ref{twist}.

Now we focus on the case where $f$ is a CM-modular form. We will say
that $f$ has CM by a non-trivial (quadratic) Dirichlet character
$\epsilon$ if
\[
\epsilon(q)a_q = a_q
\]
for a set of primes $q$ of density $1$. If we write $K$ for the
quadratic extension that corresponds to $\epsilon$ we say that $f$
has CM by $K$. For example in our previous setting if we write
$f_{E}$ for the newform that corresponds to $E$ we have that $f_E$
has CM by the non-trivial quadratic character $\epsilon$ of
$Gal(K/\Q)$ as in that case $a_q=0$ for the primes that inert in
$K$. From now on our fixed modular form $f$ will have CM by some
quadratic field $K$ and we will write $\epsilon$ for the associated
character.  From \cite{ribet} (proposition 4.4 and theorem 4.5) we
know:
\begin{enumerate}
\item The field $K$ is imaginary.
\item Let $G:=Gal(F_\infty/K)$. Then ${\rho_{\lambda}}_{|_{G}}$ is
abelian.
\item $f$ is (automorphic)-induced by a Gr\"{o}ssencharacter $\psi$ over $K$ of type
$(k-1,0)$ and after fixing an embedding of $E$ in $\bar{\Q}$ we have
$L(f,s)=L(\psi,s)$. Henceforth we assume that $F$ contains
$K(\psi(\hat{K}^\times),$ where $\hat{K}$ denotes the finite adeles
of $K,$ e.g.\  we can just take $F=K(\psi(\hat{K}^\times)$ by
\cite[15.10]{kato}.

\item For the $\lambda$-adic representation attached to $\psi$ we have
\[
{\rho_{\lambda}}_{|_{G}}= \psi_{\lambda} \oplus \psi_{\lambda}^{c}
\]
where $\psi_{\lambda}$ is the $\bar{F}_{\lambda}^{\times}$-valued
$\lambda$-adic counterpart of the Gr\"{o}ssencharacter $\psi$. In
particular
$\rho_{\lambda}=Ind_{G}^{\G}\psi_{\lambda}=Ind_{G}^{\G}\psi^c_{\lambda},$
see also \cite[(15.11.2)]{kato}. Here our convention is that
$\psi_\lambda((\frak{a},K_\infty/K))=\psi(\frak{a})^{-1}$ and we
write $V(\psi)=V_{F_\lambda}(\psi)$ for the corresponding
representation space.

\item $\psi \psi^c=N_K^{k-1}(\epsilon_{f}\circ N_{K})$.
\end{enumerate}

\textbf{Assumption:} We are going to assume all along that the size
of the torsion part of $G$ is relative prime to $p$. \newline

Now we pick a prime $p$ that splits in $K$ and write $p=\p\bar{\p}$
for a prime $\p$ of $K$. We make the standard assumption that $\p$
is the prime that corresponds to the $p$-adic embedding
\[
K \hookrightarrow \bar{\Q} \hookrightarrow \bar{\Q}_p
\]
with respect to our fixed embedding $\bar{\Q} \hookrightarrow
\bar{\Q}_p,$ in particular $\lambda|\p.$ We now note that
\[
1-a_pT+p^{k-1}\epsilon_{f}(p)T^2=(1-\psi(\bar{\p})T)(1-\psi(\p)T)
\]
and claim that $\psi(\bar{\p})$ is a $\lambda$-adic unit. Indeed, as
the character $\psi$ is of type $(k-1,0)$ we have that its
$\lambda$-adic counterpart $\psi_{\lambda}$ factors through
$Gal(K(\mathfrak{f}_{\psi}\p^{\infty})/K), $ i.e.\ is it is a
character of the form
\[
\psi_{\lambda}:Gal(K(\mathfrak{f}_{\psi}\p^{\infty})/K) \rightarrow
\bar{O}^{\times}_{\lambda}
\]
where $\bar{O}_\lambda$ the ring of integers of
$\bar{F}^{\times}_{\lambda}$. But then
$\psi(\bar{\p})=\psi_{\lambda}(Frob_{\bar{\p}}) \in
\bar{O}^{\times}_{\lambda}$ and we  have $u =\psi(\bar{\p})$.

For a $G_K$-representation $\rho:G_K\to Aut(V)$ on a finite
dimensional $F_\lambda$-vector space $V$  and any Galois stable
$\O:=\O_{F_\lambda}$-lattice $T\subseteq V$ we define
\[S(T/K_\infty):=\ker\big( \H^1(G_{S_p}(K_\infty),T\otimes \q/\z)\to \Coind^{G_{\bar{\p}}}_G\H^1(K_{\infty,\nu},T\otimes \q/\z)\big)\]
where $\nu$ as before denotes any fixed place of $K_\infty$ lying
over $\bar{\p}.$ Its Pontryagin-dual
\[\mathcal{X}(T/K_\infty)=S(T/K_\infty)^\vee\cong\mathcal{X}(K_\infty)\otimes_\zp
T^*  \] is a finitely generated $\La_\O(G)$-module. We fix an
$\O$-lattice $T(\psi)^*\subseteq V(\psi)^*$ and assume that
\[T:=T(f^*)(k-1)\] coincides with \[\Ind^K_\q T(\psi)^*\subseteq
V(f)^*=V(f^*)(k-1).\] As noted above $T(\psi)^*$ is unramified at
$\bar{\p},$ whence  the free rank one $\O$-module $T(\psi^c)^*$ with
Galois action given by the complex conjugate character
$(\psi^c)^{-1}$ is unramified at $\p.$ It follows immediately that
\[T(\psi^c)^*=T' :=T'(f^*)(k-1) \mbox{ and }
T(\psi)^*=T'':=T''(f^*)(k-1)\] as $G_K$-modules.

\begin{prop}\label{twist}
There is a natural isomorphism of $\La(\G)$-modules
\[X(T/K_\infty)\cong \Ind_\G^G\left(
\mathcal{X}(K_\infty)\otimes_\zp T(\psi)\right),\] where $G$ acts
diagonally on the tensor product.
\end{prop}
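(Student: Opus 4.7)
The plan is to mimic the proof of Proposition \ref{Selmerind}. The structural input is the identification $T \cong \Ind_\Q^K T''$ as $G_\Q$-modules together with the $G_K$-decomposition $T|_{G_K} = T' \oplus T''$; Shapiro's lemma then yields
\[ \H^1(G_{S_p}(K_\infty), T \otimes \Q_p/\Z_p) \cong \Coind_\G^G \H^1(G_{S_p}(K_\infty), T'' \otimes \Q_p/\Z_p). \]

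First I would introduce an auxiliary ``$\p$-Selmer group''
\[ S(T''/K_\infty) := \ker\big( \H^1(G_{S_p}(K_\infty), T'' \otimes \Q_p/\Z_p) \to \Coind_G^{G_{\bar\p}} \H^1(K_{\infty,\nu}, T'' \otimes \Q_p/\Z_p) \big) \]
for a fixed prime $\nu$ of $K_\infty$ above $\bar\p$, mirroring the choice made in the proof of Proposition \ref{Selmerind}. Applying the exact functor $\Coind_\G^G$ to this defining sequence, using transitivity of coinduction together with the $c$-conjugation identity $(T'')^c \cong T'$ (which transports the local term at $\bar\p$-primes with coefficients $T''$ over to $\p$-primes with coefficients $T'$), I expect to recover the defining sequence of $S^{ord}(T/K_\infty)$, giving
\[ S^{ord}(T/K_\infty) \cong \Coind_\G^G S(T''/K_\infty). \]

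Second, a Coates-type argument should identify this $\p$-Selmer group with a Hom-group: since $T'' = T(\psi)^*$ has abelian $G_K$-action by $\psi^{-1}$, is unramified at $\bar\p$, and $\mathcal{X}(K_\infty)$ classifies abelian pro-$p$ extensions of $K_\infty$ unramified outside $\p$, one obtains a natural $G$-equivariant isomorphism
\[ S(T''/K_\infty) \cong \Hom_\O\big(\mathcal{X}(K_\infty), T(\psi)^* \otimes \Q_p/\Z_p\big). \]
Taking Pontryagin duals (which commutes with $\Ind_\G^G = \Coind_\G^G$ for the finite-index embedding $G \hookrightarrow \G$) and observing $(T(\psi)^* \otimes \Q_p/\Z_p)^\vee \cong T(\psi)$ will then give the claimed isomorphism $X(T/K_\infty) \cong \Ind_\G^G(\mathcal{X}(K_\infty) \otimes_{\Z_p} T(\psi))$.

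The main technical obstacle will be the precise matching of local conditions at $\p$- and $\bar\p$-primes after applying $\Coind_\G^G$: the definition of $S^{ord}(T/K_\infty)$ uses the fixed $G_{\Q_p}$-filtration $T' \subset T$ at the chosen prime over $p$, and I must verify that transporting this filtration via $c$-conjugation to the other place of $K$ above $p$ swaps $T'$ and $T''$ in a way compatible with the $\Coind_\G^G$-coset decomposition. As in the elliptic curve case, this will plausibly require a vanishing of certain local cohomology groups at the ``wrong'' prime, analogous to the vanishing of $\H^1(K_{\infty,\nu}, E)(\bar\pi)$ used in Proposition \ref{Selmerind}.
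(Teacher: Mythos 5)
Your proposal is correct and follows essentially the same route as the paper: apply the exact functor $\Coind_\G^G$ to the defining sequence of $S(T(\psi)^*/K_\infty)$, use transitivity of coinduction and the Shapiro-type isomorphism for the global $\H^1$, recover the defining sequence of $Sel^{ord}(T/K_\infty)$, and dualize via $S(T(\psi)^*/K_\infty)^\vee\cong\mathcal{X}(K_\infty)\otimes_\zp T(\psi)$. The local-condition matching you worry about at the end is in fact automatic here and needs no vanishing statement: since $p$ splits, $\G_{\bar\p}=G_{\bar\p}$, so $\Coind_\G^{\G_{\bar\p}}\H^1(K_{\infty,\nu},T''\otimes\q/\z)=\Coind_\G^G\Coind_G^{G_{\bar\p}}\H^1(K_{\infty,\nu},T(\psi)^*\otimes\q/\z)$ on the nose, the ordinary Selmer group being defined from the outset by a single coinduction from the decomposition group of one fixed place with coefficients $T''=T(\psi)^*$.
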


\begin{proof}
 Applying the exact functor $\Coind^G_\G$ to the defining sequence
 \[\xymatrix@C=0.5cm{
   0 \ar[r] & S(T(\psi)^*/K_\infty)\ar[r]^{ } & {\H^1(G_{S_p}(K_\infty),T(\psi)^*\otimes \q/\z) }\ar[r]^{ } & {\Coind_G^{G_{\bar{\mathfrak{p}}}} \H^1(K_{\infty,\nu},T(\psi)^*\otimes \q/\z)}     }\]
  using the transitivity of
coinduction and the isomorphisms
\begin{eqnarray*}
 \Coind_\G^G\H^1(G_{S_p}(K_\infty),T(\psi)^*\otimes \q/\z) &\cong& \Hom(G_{S_p}(K_\infty),\Coind_\G^G \big(T(\psi)^*\otimes \q/\z\big)) \\
    &\cong& H^1(G_{S_p}(K_\infty),T\otimes \q/\z)
\end{eqnarray*}
as well as
\[\H^1(K_{\infty,\nu},T(\psi)^*\otimes \q/\z) \cong\H^1(K_{\infty,\nu},T''\otimes \q/\z) ,\]
  one just obtains
the defining sequence of the full Selmer group
\[\xymatrix@C=0.5cm{
   0 \ar[r] & Sel^{ord}(T/K_\infty) \ar[r]^{ } & {\H^1(G_{S_p}(K_\infty),T\otimes \q/\z) }\ar[r]^{ } & {\Coind_\G^{\G_{\bar{\mathfrak{p}}}} \H^1(K_{\infty,\nu},T''\otimes \q/\z)}.     }\]
The result now follows by taking duals. Finally, we give the
promised proof that the class of $X(T/K_\infty)$ coincides with that
of the Selmer complex used in \cite{fukaya-kato}: Let $H'$ be the
(open) maximal torsionfree pro-$p$ (abelian) subgroup of $H,$ i.e.\
$H=H'\times H''$ for some finite abelian group $H''.$ Thus we have a
natural functor from the category $\La(H')$-mod of finitely
generated $\La(H')$-modules to $\mathfrak{M}_{H}(G)$ by extending
the $H'$-action to a $G$-action letting $G/H'\cong H''\times \Gamma$
act trivially, which induces  the first homomorphism in the
following composition
\[K_0(\La(H')\mbox{-mod})\to K_0(\mathfrak{M}_{H}(G))\to K_0(\mathfrak{M}_{H}(G))\to K_0(\mathfrak{M}_{\mathcal{H}}(\G)),\]
where the second map is induced by twisting with $T(\psi)^*$ while
the last one is induced by tensoring with
$\La(\G)\otimes_{\La(G)}-.$ As, $\La(H')$ being a regular local
ring,    the $\La(H')$-rank induces an isomorphism
$K_0(\La(H')\mbox{-mod})\cong\z,$ one sees that the class of the
trivial $H'$-module $\zp,$ which is sent to the class of
$T=\Ind^G_\G(T(\psi)^*$ under the above map, is zero.
\end{proof}

Now we explain the construction of the non-abelian $p$-adic
$L$-function. We start by fixing archimedean and $p$-adic periods
that correspond canonical to the Gr\"{o}ssencharacter $\psi$ that we
have associated to our CM modular form $f$. We  pick  a prime ideal
$\mathfrak{f}$ of $K$ that is relative prime to $\p$ and with the
property that the integer $w_{\mathfrak{f}}$ defined as the number
of roots of unity in $K$ congruent to 1 modulo $\mathfrak{f}$ is
equal to 1. Then from \cite{deSh} (Lemma in page 41) we know that
there is a Gr\"{o}ssencharacter $\phi$ of $K$ of conductor
$\mathfrak{f}$ and type $(1,0)$. Moreover from the same lemma in
\cite{deSh} we know that if $\phi$ is a Gr\"{o}ssencharacter of $K$
of type $(1,0)$ then there exists an elliptic curve defined over
$K(\mathfrak{f}_\phi)$ where $\mathfrak{f}_{\phi}$ the conductor of
$\phi$ such that $E$ has CM by $O_K$ and its associated
Gr\"{o}ssencharacter is given by $\psi_{E}=\phi \circ
N_{K(\mathfrak{f}_\phi)/K}$. Now we are ready to define our periods
for our Gr\"{o}ssencharacter $\psi$. We distinguish two cases
\begin{enumerate}
\item $\psi$ is of type $(1,0)$, i.e. $f$ is of weight 2. Then we
write $E$ for the elliptic curve defined over $K(\mathfrak{f}_\psi)$
and $\Lambda$ for its corresponding lattice in $\mathbb{C}$. Then we
define $\Omega_\infty$ by
\[
\Omega_{\infty} \Lambda = \mathfrak{f}
\]
where we implicity assuming that we see $\mathfrak{f}$ as a lattice
in $\mathbb{C}$ with respect the embedding $K \hookrightarrow
\mathbb{C}$ imposed by the CM type of the character $\psi$ (see also
\cite{deSh} page 66). The $p$-adic periods $\Omega_p$ are defined
using the elliptic curve $E$ as is done in \cite{deSh} page 66)

\item If the character $\psi$ is of type $(k-1,0)$ for $k >2$ then
we pick some other Gr\"{o}ssencharacter $\phi$ of type $(1,0)$ and
of conductor $\mathfrak{f}_\phi$ prime to $\p$ and write
\[
\psi = \theta \phi^{k-1}
\]
where $\theta$ is some finite order character of conductor relative
prime to $\p$. We define then the periods $\Omega_\infty$ and
$\Omega_p$ as in (i) using the character $\phi$.
\end{enumerate}

Let $K(\p^\infty)$ be the maximal $\Z_p$-extension of $K$ inside
$K(\mathfrak{f}_\psi\p^\infty)$ and set
$G':=Gal(K(\mathfrak{f}_\psi\p^\infty)/K(\p^\infty))$. Let
$m:=|G'|$, then we define $D$ to be the ring generated over
$\widehat{\mathbb{Z}_p^{nr}}$ by the $m^{th}$ roots of unity. Using
exactly the same construction as in section 2 we conclude the
following corollary

\begin{cor}\label{p-LCM1}
There exists a unique $\mathcal{L}_{\bar\psi} : = \mu \in \La_D (G)$
such that for $0 \leq -j$ and $0<k-1+j$
\[
 \mathcal{L}_{\bar\psi} { (\chi \kappa^{j}) } = \mathop{\int}\limits_{G}
 { \chi \kappa^{j} } d\mu =\Gamma(k-1+j)i^{k-1+j}\frac{L ({\bar\psi} \chi,
 k-1+j)}{(2\pi)^j{\Omega^{k-1}_\infty}} \times
\]
\[
 e_{\mathfrak{p}}(\bar{\chi})
 P_{\frak{p}}({\bar\chi}, \frac{w}{p^{-j+1}}) P_{\bar{\frak{p}}} ({ \chi},
 u^{-1}p^{-j})\,\,\left(\frac{\psi(\mathfrak{p})}{p}\right)^{\mathfrak{f_{\mathfrak{p}}}(\bar{\chi})}p^{j\mathfrak{f_{\mathfrak{p}}}(\bar{\chi})}
\]
for all Artin-character $\chi$ of $G$ and the cyclotomic character
$\kappa : Frob^{-1}_{\mathfrak{q}} \mapsto N_K(\mathfrak{q})$.
\end{cor}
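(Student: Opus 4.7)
The strategy is to repeat the construction of the measure $\mathcal{L}'_{\bar\psi}$ carried out in the weight-2 case (the first corollary following Theorem \ref{measureKatz}), now applied to the higher-weight Katz measure and allowing cyclotomic twists in the interpolation formula. The existence of the measure is essentially a repackaging of the Katz--de Shalit measure of Theorem \ref{measureKatz}; the real content is the precise matching of $\Gamma$-factors, Euler factors, epsilon factors and period exponents.

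First I would decompose $G \cong \Delta \times G_1$, where $\Delta$ is the torsion part (prime to $p$ by the standing assumption of this section) and $G_1$ is the free pro-$p$ part, giving $\Lambda_D(G) \cong \bigoplus_{\theta \in \hat{\Delta}} D[\![G_1]\!]$. For each character $\theta$ of $\Delta$, letting $\mathfrak{g}$ denote the prime-to-$p$ part of the conductor of $\theta\cdot\psi_\Delta$, I would consider the Katz measure $\mu(\mathfrak{g}\bar{\mathfrak{p}}^\infty)$ supplied by Theorem \ref{measureKatz}. Convolving with the unit $\delta\,\sigma_\delta^{-1}$ (this normalisation converts the Gauss sum $G(\chi)$ appearing in Katz's formula into the epsilon factor $e_{\mathfrak{p}}(\bar\chi)$, via the Gauss-sum lemma of Section \ref{MC} applied in the CM section), twisting by $\psi^{-1}\theta$, and pushing forward along the canonical projection to $G_1$, yields the $\theta$-component $\mu_\theta$. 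Assembling these components produces a single measure $\mu \in \Lambda_D(G)$.

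For the interpolation property, I would evaluate $\mu$ at $\chi\kappa^{j}$; unwinding the construction, this amounts to plugging the character $\epsilon = \psi\bar\chi\kappa^{-j}$ of $\mathrm{Gal}(K(\mathfrak{g}p^\infty)/K)$, which has type $(k-1+j,-j)$, into Katz's formula with $(k,j)$ of Theorem \ref{measureKatz} replaced by $(k-1+j,-j)$. The hypothesis $0\le -j$ and $0<k-1+j$ of the corollary is precisely Katz's admissibility condition $k>0\ge j$. Katz's formula then immediately delivers the factor $\Omega_\infty^{-(k-1)}\,\Gamma(k-1+j)\,i^{k-1+j}(2\pi)^{-j}$ (absorbing the $\sqrt{d_K}^{-j}$ into the normalisation of $\Omega_\infty$, as $\sqrt{d_K}$ is a $p$-adic unit and so disappears on the $p$-adic side), the primitive L-value $L(\bar\psi\chi,k-1+j)$ (thanks to the $\theta$-adapted choice of $\mathfrak{g}$), and the local factor at $\bar{\mathfrak{p}}$ coming from the $(1-\epsilon(\mathfrak{p})/p)$-term in Katz's theorem.

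The remaining bookkeeping concerns the Euler-factor exponents and the powers of $u$. Here I use the identities $u=\psi(\bar{\mathfrak{p}})$ and $u^{-1}=\psi(\mathfrak{p})/p=w/p$ (which carry over from the weight-2 setting after relabelling), together with the fact that cyclotomic twisting by $\kappa^{-j}$ multiplies Frobenius eigenvalues by $p^{-j}$ at both primes above $p$. Combined with the unramifiedness of $\psi$ at $\bar{\mathfrak{p}}$, which gives $\mathfrak{f}_{\mathfrak{p}}(\bar\chi\psi)=\mathfrak{f}_{\mathfrak{p}}(\bar\chi)$, this accounts for the twisted Euler factors $P_{\mathfrak{p}}(\bar\chi, w/p^{-j+1})$ and $P_{\bar{\mathfrak{p}}}(\chi, u^{-1}p^{-j})$ and for the correction $(\psi(\mathfrak{p})/p)^{\mathfrak{f}_{\mathfrak{p}}(\bar\chi)}\,p^{j\mathfrak{f}_{\mathfrak{p}}(\bar\chi)}$ in the statement. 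The primitive-versus-imprimitive L-function issue is handled, just as in the weight-2 case, by the $\theta$-by-$\theta$ choice of $\mathfrak{g}$. The main obstacle is this last, purely combinatorial matching of exponents and Euler factors; conceptually the corollary is just Katz's measure re-normalised so as to interpolate the primitive critical values of $\bar\psi\chi$ under cyclotomic twist.
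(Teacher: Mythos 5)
Your proposal follows essentially the same route as the paper: the measure is assembled component\hbox{-}by\hbox{-}component over $\hat{\Delta}$ exactly as in the weight-two corollary (convolution with $\delta\,\sigma_{\delta}^{-1}$, the Gauss-sum lemma converting $G(\chi)$ into $e_{\mathfrak{p}}(\bar{\chi})$, twisting by $\psi^{-1}$ and pushing forward), and the interpolation formula is obtained by feeding the character $\epsilon=\psi N_K^{j}\bar{\chi}$ into Theorem~\ref{measureKatz}, which is precisely how the paper argues. One small correction: this $\epsilon=\phi^{k-1+j}\bar{\phi}^{\,j}\theta\bar{\chi}$ has type $(k-1+j,\,j)$, not $(k-1+j,\,-j)$; with your sign the admissibility condition of Theorem~\ref{measureKatz} would read $0\le j$, contradicting the hypothesis $0\le -j$ of the corollary, whereas with type $(k-1+j,\,j)$ the hypotheses match exactly and the period and $\Gamma$-factor exponents ($\Omega_\infty^{\,j-(k-1+j)}=\Omega_\infty^{-(k-1)}$, $\Gamma(k-1+j)$, $(2\pi)^{-j}$) come out as stated.
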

\begin{proof} We write $\mathfrak{g}$ for the maximal ideal that is contained in $\mathfrak{f}$ and $\bar{\mathfrak{f}}$.
From \ref{measureKatz} we know that there exists a measure
$\mu(\mathfrak{g}\bar{\mathfrak{p}}^\infty)$ of
$G(K(\mathfrak{g}p^\infty)/K)$ so that
\[
\Omega_p^{j-k}\int_{G(K(\mathfrak{g}p^\infty)/K)}\epsilon^{-1}(\sigma)\,\,\mu(\mathfrak{g}\bar{\mathfrak{p}}^\infty)=\Gamma(k)i^k\Omega_{\infty}^{j-k}(\frac{\sqrt{d_K}}{2\pi})^{j}G(\epsilon)(1-\frac{\epsilon(\mathfrak{p})}{p})L_{\bar{\mathfrak{p}}\mathfrak{g}}(\epsilon^{-1},0)
\]
for $\epsilon$ of type $(k,j)$ with $0 \leq -j$ and $k > 0$.  As
explained above the character $\psi$ is of the form
$\phi^{k-1}\theta$ for $\phi$ of type $(1,0)$ and $\theta$ a finite
character, both unramified at $p$. Moreover we have the relation
$\phi \bar\phi =N_K$. In particular for a finite order character
$\chi$ of $Gal(K(p^\infty)/K)$ the character
\[
\epsilon:= \psi N_{K}^{j}\bar{\chi} = \phi^{k-1+j}\bar{\phi}^j
\theta \bar{\chi}
\]
is a valid choice for $\epsilon$ above provided that $0 \leq -j$ and
$0< k-1+j$. But then for the $L$ function we have the equalities
\[
L(\epsilon^{-1},0)=L(\psi^{-1}N_K^{-j}\chi,0)=L(\bar{\psi}\chi
N_K^{-(k-1+j)})=L(\bar{\psi}\chi,k-1+j)
\]
Then the proof is the same as in the case of elliptic curves with CM
that we did in corollary \ref{measure1}.
\end{proof}
We note that in the more general case that we consider now we do not
have $\bar{\psi}=\psi^{c}$ but only that $\psi^c=\bar\psi
(\epsilon_{f}\circ N_{K})$ which means that the motive $M(f)$ is not
self-dual. We now use the functional equation in order to get the
critical values at the point $s=1$. For the character
$\psi\bar{\chi}$ of type $(k-1,0)$ we have that its dual
representation is given by the character
$(\psi\bar{\chi})^{-1}=\psi^{-1}\chi$ for which we have that
$L(\psi^{-1}\chi,s)=L(\bar{\psi}\chi,s+(k-1))$. Then the functional
equation reads (see \cite{Tate} page 16 or \cite{deSh} page 37)
\[
\Gamma_{\psi}(s)L(\psi\bar{\chi},s)=e(\psi\bar{\chi}\omega_s)\Gamma_{\bar{\psi}}((k-1)+1-s)L(\bar{\psi}\chi,(k-1)+1-s)
\]
where for a character $\phi$ of type $(k,j)$ we write
$\Gamma_{\phi}(s):=\frac{\Gamma(s-min(k,j))}{(2\pi)^{s-min(k,j)}}$
and
$e(\psi\bar{\chi}\omega_s)=\prod_{\mathfrak{q}}e_{\mathfrak{q}}(\psi\bar{\chi}\omega_s,\psi_{ad},dx_{\psi_{ad}})$
where $\omega_s$ as in Tate's \cite{Tate}. In particular for
$s:=1-j$ with $j \leq 0$ we have
\[
L ({\bar\psi} \chi,
k-1+j)=\frac{\Gamma_{\psi}(1-j)}{\Gamma_{\bar{\psi}}(k-1+j)}e(\psi\bar{\chi}\omega_{1-j})^{-1}L(\psi\bar{\chi},1-j)
\]
Hence
\[
L ({\bar\psi} \chi, k-1+j)\,
 e_{\mathfrak{p}}(\bar{\chi})\left(\frac{\psi(\mathfrak{p})}{p}\right)^{\mathfrak{f_{\mathfrak{p}}}(\bar{\chi})}p^{j\mathfrak{f_{\mathfrak{p}}}(\bar{\chi})}
=\frac{\Gamma_{\psi}(1-j)}{\Gamma_{\bar{\psi}}(k-1+j)}e(\psi\bar{\chi}\omega_{1-j})^{-1}e_{\mathfrak{p}}(\bar{\chi})\left(\frac{\psi(\mathfrak{p})}{p}\right)^{\mathfrak{f_{\mathfrak{p}}}(\bar{\chi})}p^{j\mathfrak{f_{\mathfrak{p}}}(\bar{\chi})}
L(\psi\bar{\chi},1)
\]
But we have
\[
e(\psi\bar{\chi}\omega_{1-j})^{-1}e_{\mathfrak{p}}(\bar{\chi})\left(\frac{\psi(\mathfrak{p})}{p}\right)^{\mathfrak{f_{\mathfrak{p}}}(\bar{\chi})}p^{j\mathfrak{f_{\mathfrak{p}}}(\bar{\chi})}
=\prod_{\mathfrak{q}}e_{\mathfrak{q}}(\psi\bar{\chi}\omega_{1-j},\psi_{ad},dx_{\psi_{ad}})^{-1}e_{\mathfrak{p}}(\psi\bar{\chi}\omega_{1-j})=
\prod_{\mathfrak{q}\neq
\mathfrak{p}}e_{\mathfrak{q}}(\psi\bar{\chi}\omega_{1-j},\psi_{ad},dx_{\psi_{ad}})^{-1}
\]
where we have used the fact that $dx_{\psi_{ad}}=dx_1$ at
$\mathfrak{p}$ as this prime is unramified in $K$. Now we observe
that
\[
e_{\bar{\mathfrak{p}}}(\psi\bar{\chi}\omega_{1-j},\psi)=e_{\bar{\mathfrak{p}}}(\bar{\chi}\omega_{1-j},\psi_{ad},dx_{1})\psi(\bar{\mathfrak{p}})^{\mathfrak{f_{\bar{\mathfrak{p}}}}(\bar{\chi})}=
e_{\bar{\mathfrak{p}}}(\chi\omega_j,\psi_{ad}^{-1})^{-1}\psi(\bar{\mathfrak{p}})^{\mathfrak{f_{\bar{\mathfrak{p}}}}(\bar{\chi})}=e_{\bar{\mathfrak{p}}}(\chi\omega_j,\psi_{ad}^{-1})^{-1}u^{\mathfrak{f_{\bar{\mathfrak{p}}}}(\bar{\chi})}
\]
where we have used the duality
$e_{\bar{\mathfrak{p}}}(\chi\omega_j,\psi_{ad}^{-1},dx_1)e_{\bar{\mathfrak{p}}}(\bar{\chi}\omega_{1-j},\psi_{ad},dx_1)=1$.
Hence we obtain
\[
\prod_{\mathfrak{q}\neq
\mathfrak{p}}e_{\mathfrak{q}}(\psi\bar{\chi}\omega_{1-j},\psi_{ad},dx_{\psi_{ad}})^{-1}=
\prod_{\mathfrak{q}\neq
\mathfrak{p},\bar{\mathfrak{p}}}e_{\mathfrak{q}}(\psi\bar{\chi}\omega_{1-j},\psi_{ad},dx_{\psi_{ad}})^{-1}e_{\bar{\mathfrak{p}}}(\chi)u^{-\mathfrak{f_{\bar{\mathfrak{p}}}}(\bar{\chi})}p^{-j\mathfrak{f_{\mathfrak{p}}}(\bar{\chi})}
\]
We can now state the following corollary
\begin{cor} \label{p-LCM2}
There exists a unique $\mathcal{L}^{(tw)}_{\bar\psi} : = \mu \in
\La_D (G)$ such that for $0 \leq -j$ and $0<k-1+j$ we have
\[
 \mathcal{L}^{(tw)}_{\bar\psi} { (\chi \kappa^j) } = \mathop{\int}\limits_{G}
 { \chi\kappa^j } d\mu =\Gamma(k-1+j)i^j\frac{L (\psi \bar{\chi}, 1-j)}{(2\pi)^j{\Omega^{k-1}_\infty}} \,
 e_{\bar{\mathfrak{p}}}(\chi) \times
 \]
\[
 P_{\frak{p}}({\bar\chi}, \frac{w}{p^{-j+1}}) P_{\bar{\frak{p}}} ({ \chi},
 u^{-1}p^{-j})\,\,u^{-\mathfrak{f_{\bar{\mathfrak{p}}}}(\bar{\chi})}p^{-j\mathfrak{f_{\mathfrak{p}}}(\bar{\chi})}\left(\frac{\Gamma_{\psi}(1-j)}{\Gamma_{\bar\psi}(k-1+j)}\right)
\]
for all Artin-character $\chi$ of $G.$
\end{cor}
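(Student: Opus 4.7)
This corollary is a formal consequence of Corollary \ref{p-LCM1} together with the functional equation for Hecke $L$-functions, the relevant portion of which is already spelled out in the displayed computations immediately preceding the statement. Substituting the functional equation into the interpolation formula for $\mathcal{L}_{\bar\psi}$ and comparing with the desired formula for $\mathcal{L}^{(tw)}_{\bar\psi}$, one reads off that the two measures should differ by evaluation at $\chi\kappa^j$ of the quantity
\[
c(\chi,j) \;:=\; i^{k-1}\,\prod_{\mathfrak{q}\neq\mathfrak{p},\bar{\mathfrak{p}}} e_\mathfrak{q}(\psi\bar\chi\omega_{1-j},\psi_{ad},dx_{\psi_{ad}})^{-1}.
\]
The proof therefore reduces to realising $c(\chi,j)$ as the evaluation at $\chi\kappa^j$ of a unit $c\in\Lambda_D(G)^\times$; then setting $\mathcal{L}^{(tw)}_{\bar\psi}:=c\cdot \mathcal{L}_{\bar\psi}$ completes the construction. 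Uniqueness is automatic, as in Corollary \ref{p-LCM1}, from the $p$-adic density of the characters $\chi\kappa^j$ in the range of interpolation.

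\textbf{Realising $c$ in $\Lambda_D(G)$.} The product above is in fact finite: with $\psi_{ad}$ taken in its standard form (so that its local conductor at $\mathfrak{q}$ equals the inverse of the local different), the local $\epsilon$-factor of a character unramified at $\mathfrak{q}$ equals $1$. Hence only the finitely many primes $\mathfrak{q}\neq \mathfrak{p},\bar{\mathfrak{p}}$ at which either $\psi$ or $\chi$ ramifies contribute. At each such $\mathfrak{q}$, the local $\epsilon$-factor depends on $\chi\kappa^j$ only through its image in a fixed finite quotient of the decomposition group at $\mathfrak{q}$ (controlled by the local conductor exponents of $\psi$ and $\chi$), and takes values in $D^\times$ once $D$ contains the required roots of unity, which is guaranteed by our enlargement of $D$ to contain the $m$-th roots of unity for $m=|G'|$. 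Assembling these local contributions, the map $\chi\kappa^j\mapsto c(\chi,j)$ factors through a fixed finite quotient $G_0$ of $G$, and is therefore realised by a canonical element of the group algebra $D[G_0]\subseteq \Lambda_D(G)$ which is furthermore a unit, since each local factor lies in $D^\times$.

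\textbf{Obstacle.} The only delicate point is the bookkeeping of the normalizations of $\psi_{ad}$ and $dx_{\psi_{ad}}$ at each prime, and of the factorisation of the global $\epsilon$-factor $e(\psi\bar\chi\omega_{1-j})$ into local ones. But these are precisely the conventions already used in the functional equation calculation above the statement, taken from \cite{Tate} and \cite{deSh}; once they are correctly tracked, the $\Gamma$-ratio $\Gamma_\psi(1-j)/\Gamma_{\bar\psi}(k-1+j)$ appears explicitly in the target interpolation formula of $\mathcal{L}^{(tw)}_{\bar\psi}$ as stated, and all remaining factors (Euler factors at $\mathfrak{p},\bar{\mathfrak{p}}$, powers of $u$ and $p$, the constant $(2\pi)^j\Omega_\infty^{k-1}$) match between the two sides. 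This yields the claimed element $\mathcal{L}^{(tw)}_{\bar\psi}\in \Lambda_D(G)$.
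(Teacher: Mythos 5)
Your overall strategy is the same as the paper's: rewrite the interpolation formula of Corollary \ref{p-LCM1} via the functional equation and absorb the resulting product of local $\epsilon$-factors away from $p$ into a unit of $\Lambda_D(G)$. The gap lies in your justification that this product is realised by an element of the Iwasawa algebra. You assert that at each relevant prime $\mathfrak{q}\neq\mathfrak{p},\bar{\mathfrak{p}}$ the factor $e_\mathfrak{q}(\psi\bar\chi\omega_{1-j},\psi_{ad},dx_{\psi_{ad}})$ depends on $\chi\kappa^j$ only through a fixed finite quotient of the decomposition group, so that $c(\chi,j)$ factors through a finite quotient $G_0$ of $G$ and is realised by an element of $D[G_0]$. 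This is false. Writing $\chi=\theta\chi_\Gamma$ with $\chi_\Gamma$ factoring through $\Gamma\cong\Z_p^2$, the character $\chi_\Gamma$ is unramified at every $\mathfrak{q}\nmid p$, and the twisting formula for $\epsilon$-factors by unramified characters gives
\[
e_\mathfrak{q}(\psi\bar\theta\bar\chi_\Gamma\omega_{1-j},\psi_{ad},dx_{\psi_{ad}})=(\bar\chi_\Gamma\omega_{1-j})(\pi_\mathfrak{q})^{a_\mathfrak{q}}\,e_\mathfrak{q}(\psi\bar\theta,\psi_{ad},dx_{\psi_{ad}}),
\]
where $a_\mathfrak{q}$ is the exponent of $\mathfrak{q}$ in $\mathfrak{f}_{\psi\theta}\mathfrak{d}_K$. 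The value $\chi_\Gamma(\pi_\mathfrak{q})=\chi_\Gamma(\sigma_\mathfrak{q})$ at the Artin symbol of $\mathfrak{q}$ does \emph{not} factor through a finite quotient: $\sigma_\mathfrak{q}$ has infinite order in $\Gamma$, so this value ranges over an infinite set as $\chi_\Gamma$ varies; likewise the contribution $N_K(\mathfrak{q})^{-(1-j)}$ of $\omega_{1-j}$ is the evaluation of a power of $\kappa$ at $\sigma_\mathfrak{q}$ and depends nontrivially on $j$.

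The conclusion is still true, but the mechanism is different from the one you propose. After decomposing $\Lambda_D(G)\cong\oplus_{\theta}D[[\Gamma]]$, the $\theta$-component of the required twisting element is the \emph{group-like} element
\[
\frac{1}{N_K(\mathfrak{f}_{\psi\theta}\mathfrak{d}_K)}\Bigl(\prod_{\mathfrak{q}\mid\mathfrak{f}_\psi\mathfrak{d}_K}e_\mathfrak{q}(\psi\bar\theta,\psi_{ad},dx_{\psi_{ad}})\Bigr)\,\sigma^{-1}_{\mathfrak{f}_{\psi\theta}\mathfrak{d}_K}\in D[[\Gamma]]^{\times},
\]
where $\sigma_{\mathfrak{f}_{\psi\theta}\mathfrak{d}_K}\in\Gamma$ is the Artin symbol of $\mathfrak{f}_{\psi\theta}\mathfrak{d}_K$ (well defined because $K_\infty/K$ is unramified outside $p$); it is a unit because the constant $\prod_{\mathfrak{q}}e_\mathfrak{q}(\psi\bar\theta,\psi_{ad},dx_{\psi_{ad}})$ is a $\mathfrak{p}$-adic unit, which is a nontrivial input (\cite[page 94]{deSh}) rather than a formality. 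Your argument as written produces neither the Artin symbol nor the $j$-dependence, so the step ``realised by a canonical element of $D[G_0]$'' would fail; everything else in your outline (reduction via the functional equation, matching of the remaining factors, uniqueness by density) agrees with the paper.
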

\begin{proof}We have already constructed a measure with the
interpolation property
\[
 \mathcal{L}_{\bar\psi} { (\chi \kappa^{j}) } = \mathop{\int}\limits_{G}
 { \chi \kappa^{j} } d\mu =\Gamma(k-1+j)\frac{L ({\bar\psi} \chi, k-1+j)}{(2\pi)^j{\Omega^{k-1}_\infty}}
 e_{\mathfrak{p}}(\bar{\chi})
 P_{\frak{p}}({\bar\chi}, \frac{w}{p^{-j+1}}) P_{\bar{\frak{p}}} ({ \chi},
 u^{-1}p^{-j})\,\,\left(\frac{\psi(\mathfrak{p})}{p}\right)^{\mathfrak{f_{\mathfrak{p}}}(\bar{\chi})}p^{j\mathfrak{f_{\mathfrak{p}}}(\bar{\chi})}
\]
and using the above computations we can rewrite it as
\[
 \mathcal{L}_{\bar\psi} { (\chi \kappa^j) } = \mathop{\int}\limits_{G}
 { \chi \kappa^j } d\mu = \left(\frac{\Gamma_{\psi}(1-j)}{\Gamma_{\bar\psi}(k-1+j)}
\prod_{\mathfrak{q}\neq
\mathfrak{p},\bar{\mathfrak{p}}}e_{\mathfrak{q}}(\psi\bar{\chi}\omega_{1-j},\psi_{ad},dx_{\psi_{ad}})^{-1}\right)\times
\]
\[
\Gamma(k-1+j)i^j\frac{L (\psi \bar{\chi},
1-j)}{(2\pi)^j{\Omega^{k-1}_\infty}} \,
 e_{\bar{\mathfrak{p}}}(\chi)
 P_{\frak{p}}({\bar\chi}, \frac{w}{p^{-j+1}}) P_{\bar{\frak{p}}} ({ \chi},
 u^{-1}p^{-j})\,\,u^{-\mathfrak{f_{\bar{\mathfrak{p}}}}(\bar{\chi})}p^{-j\mathfrak{f_{\mathfrak{p}}}(\bar{\chi})}
\]
Now we claim that the mapping $\chi\kappa^j \mapsto
\prod_{\mathfrak{q}\neq
\mathfrak{p},\bar{\mathfrak{p}}}e_{\mathfrak{q}}(\psi\bar{\chi}\omega_{1-j},\psi_{ad},dx_{\psi_{ad}})$
is a unit in the Iwasawa algebra $\Lambda_D(G)$ and hence we can
twist our measure by this element to conclude the proposition.

We write $K_{\infty} \subseteq K(p^\infty)$ for the
$\z_p^2$-extension of $K$ and define $\Gamma:=Gal(K_{\infty}/K)
\cong \z^2_p$. We decompose $G=\Delta \times \Gamma$ for $\Delta$
finite of order relative prime to $p$. Then we have that
$\Lambda_D(G)=D[[G]]=D[\Delta][[\Gamma]]$. We write $\hat{\Delta}$
for the group of characters of $\Delta$. Then by our assumptions on
$D$ we have that $D[\Delta] \cong \oplus_{\theta \in \hat{\Delta}}
D$ given by $\alpha \mapsto (\ldots,\theta(\alpha),\ldots)$ and
hence $\Lambda_D(G) \cong \oplus_{\theta}D[[\Gamma]]$. Every Artin
character $\chi$ of $G$ can be decomposed as $\chi=\theta
\chi_{\Gamma}$ for $\theta$ a character of $\Delta$ and
$\chi_{\Gamma}$ a character of $\Gamma$. We write $\mathfrak{d}_K$
for the different of $K$ over $\Q$ We note that $\theta$ can be
ramified at $p$ or at $\mathfrak{q}$ with $\mathfrak{q} |
\mathfrak{f}_\psi$ for the conductor of $\psi$. We write
$\mathfrak{f}_{\psi\theta}$ for the non-$p$ part of the conductor of
$\psi\bar{\theta}$. Then we have
\[
\prod_{\mathfrak{q}\neq
\mathfrak{p},\bar{\mathfrak{p}}}e_{\mathfrak{q}}(\psi\bar{\chi}\omega_{1-j},\psi,dx_{\psi})=i^{k-1}\prod_{\mathfrak{q}
|
\mathfrak{f}_{\psi}\mathfrak{d}_K}e_{\mathfrak{q}}(\psi\bar{\chi}\omega_{1-j},\psi_{ad},dx_{\psi_{ad}})=
\]
\[
\bar{\chi}_\Gamma(\mathfrak{f}_{\psi\theta}\mathfrak{d}_K)N_K(\mathfrak{f}_{\psi\theta}\mathfrak{d}_K)^{-(1-j)}i^{k-1}\prod_{\mathfrak{q}
|
\mathfrak{f}_{\psi}\mathfrak{d}_K}e_{\mathfrak{q}}(\psi\bar{\theta},\psi_{ad},dx_{\psi_{ad}})
\]
since $\chi_\Gamma$ is only at $p$ ramified and
$(p,\mathfrak{f}_{\psi}\mathfrak{d}_K)=1$ under our assumptions. But
$\prod_{\mathfrak{q}|\mathfrak{f}_{\psi}\mathfrak{d}_K}e_{\mathfrak{q}}(\psi\bar{\theta},\psi_{ad},dx_{\psi_{ad}})$
is a $\mathfrak{p}$-adic unit (see \cite{deSh} page 94) and hence we
can define the measure
\[
\frac{1}{N_K(\mathfrak{f}_{\psi\theta}\mathfrak{d}_K)}\prod_{\mathfrak{q}
|
\mathfrak{f}_{\psi}\mathfrak{d}_K}e_{\mathfrak{q}}(\psi\bar{\theta},\psi_{ad},dx_{\psi_{ad}})\sigma^{-1}_{\mathfrak{f}_{\psi\theta}\mathfrak{d}_K}
\in \La_D [[\Gamma]]^{\times}
\]
where $\sigma_{\mathfrak{f}_{\psi\theta}\mathfrak{d}_K } \in \Gamma$
corresponds through the Artin reciprocity to
$\mathfrak{f}_{\psi\theta}\mathfrak{d}_K$, well defined as
$K_{\infty}$ ramifies only at $p$. We then define the element $E \in
\La_D[[G]]^{\times }$ to be the element that corresponds to
\[
(\ldots,\frac{1}{N_K(\mathfrak{f}_{\psi\theta}\mathfrak{d}_K)}\prod_{\mathfrak{q}
|
\mathfrak{f}_{\psi}\mathfrak{d}_K}e_{\mathfrak{q}}(\psi\bar{\theta},\psi_{ad},dx_{\psi_{ad}})\sigma^{-1}_{\mathfrak{f}_{\psi\theta}\mathfrak{d}_K},\ldots
) \in \oplus_{\theta}D[[\Gamma]]
\]
under the above mentioned isomorphism $\Lambda_D(G) \cong
\oplus_{\theta}D[[\Gamma]]$. We then obtain for a character $\chi$
of $G$
\[
E(\chi \kappa^j) =i^{1-k} \prod_{\mathfrak{q}\neq
\mathfrak{p},\bar{\mathfrak{p}}}e_{\mathfrak{q}}(\psi\bar{\chi}\omega_{1-j},\psi,dx_{\psi})
\]
which it allows us to conclude the proposition.
\end{proof}

Using now the natural map
\[
(\imath_{S})_{*}:K_{1}(\Lambda(G)_{S^*}) \rightarrow
K_1(\Lambda(\G)_{S^*})
\]
we define
\[
\mathcal{L}:=(\imath_S)_*(\mathcal{L}_{\bar\psi})\,\,\,and\,\,\,\mathcal{L}^{(tw)}:=(\imath_S)_*(\mathcal{L}^{(tw)}_{\bar\psi}).
\]
As in the case of elliptic curves with CM we need to understand the
following correction term which describes the change of complex
periods
\[\mathcal{L}_\Omega:= \frac{(2\pi)^{k-2}\Omega_+}{\Omega^{k-1}_\infty} \frac{1+c}{2} + \frac{(2\pi)^{k-2}\Omega_-}{\Omega^{k-1}_\infty}\frac{1-c}{2}.\]

\begin{conj}[Period Relation]\label{ConjPer} We conjecture
\[
\frac{(2\pi)^{k-2}\Omega_+}{\Omega^{k-1}_\infty} \in
O_{E_\lambda}^\times \,\,\,and
\,\,\,\frac{(2\pi)^{k-2}\Omega_-}{\Omega^{k-1}_\infty} \in
O_{E_\lambda}^\times
\]
and hence   $\mathcal{L}_\Omega\in \La_{O_{E_\lambda}}(\G)^\times.$
\end{conj}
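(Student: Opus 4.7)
The plan is to establish the period relation by reducing to the known period identities for Hecke character motives via the induction $M(f) \cong \Ind_K^{\Q} M(\psi)$, and then upgrading the resulting algebraic equality to an integral equality by means of the ordinarity of $p$. The argument should proceed in three stages.

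First, I would exploit Deligne's formalism of motivic periods: up to elements of $F^\times$, the periods $\Omega_\pm$ of $M(f)$ agree with Deligne's periods $c^\pm(M(f))$. Since $M(f)$ is an induction from $K$, the general factorization of Deligne periods under induction from a CM field (as developed and used by Blasius \cite{Blasius,Blasius2}) expresses $c^\pm(M(f))$ in terms of the Hecke character periods $c(M(\psi))$ and $c(M(\psi^c)) = c(M(\bar\psi \cdot (\epsilon_f\circ N_K)))$, with a contribution of $(2\pi i)^{k-1}$ coming from the archimedean local factors. Concretely one expects a relation of the form
\[
c^+(M(f))\, c^-(M(f)) \;\sim\; (2\pi i)^{k-1}\, c(M(\psi))\, c(M(\bar\psi))
\]
modulo $F^\times$, the sign refinement being governed by the action of complex conjugation on the Betti realization of the induced motive.

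Second, I would identify $c(M(\psi))$ with $\Omega_\infty^{k-1}$. When $k=2$ this is precisely the period lattice identification used in Section 2 and proven there. For general $k$ we have written $\psi = \theta \phi^{k-1}$ with $\phi$ a Grössencharacter of type $(1,0)$ attached to a CM elliptic curve over $K(\mathfrak{f}_\phi)$ and $\theta$ of finite order, and $\Omega_\infty$ is by construction the weight-one period of this auxiliary elliptic curve. The multiplicative behaviour of Deligne periods under tensor products of Hecke character motives then yields $c(M(\psi)) \sim \Omega_\infty^{k-1}$ modulo $F^\times$, and similarly for $c(M(\bar\psi))$. Combined with the first step, this produces the desired algebraic identity between $(2\pi)^{k-2}\Omega_\pm$ and $\Omega_\infty^{k-1}$, with the power $k-2$ (rather than $k-1$) reflecting the fact that a single factor of $2\pi i$ is absorbed into the Betti/de Rham comparison for $M(f)$ itself.

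Third, to upgrade the algebraic relation to the unit statement $(2\pi)^{k-2}\Omega_\pm/\Omega_\infty^{k-1} \in \O_{F_\lambda}^\times$, I would invoke the good ordinary assumption at $p$ and the normalization of $\Omega_p$ built into Theorem \ref{measureKatz}. The $p$-adic period $\Omega_p$ attached to $\phi$ is a unit in $\widehat{\Z_p^{nr}}^\times$, and the $p$-integrality of values of $\mathcal{L}^{(tw)}_{\bar\psi}$ (together with the algebraicity just established and the fact that the Euler and $\varepsilon$-factors at $p$ are $\lambda$-adic units on both sides) forces the comparison unit to be a $\lambda$-adic unit. The decomposition $\mathcal{L}_\Omega = \alpha_+ \tfrac{1+c}{2} + \alpha_-\tfrac{1-c}{2}$ with $\alpha_\pm \in \O_{F_\lambda}^\times$ is then formally invertible inside $\Lambda_{\O_{F_\lambda}}(\G)$, exactly as in the weight-two lemma above.

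The main obstacle will be the $\pm$-refinement: Blasius-type factorizations naturally yield an identity for the product $c^+ c^-$, and isolating each sign component requires a careful analysis of how complex conjugation interchanges the two embeddings of $K$ and thereby the pair $(\psi,\bar\psi)$ on the Betti side. A secondary difficulty is ensuring that the transcendental unit ambiguity is $\lambda$-integral rather than merely algebraic; this is where the ordinarity hypothesis and the assumption that the torsion part of $G$ is of order prime to $p$ enter decisively, and verifying this genuinely requires the existence of the Katz measure of Theorem \ref{measureKatz} in its strongest integral form.
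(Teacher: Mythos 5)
You are attempting to prove a statement that the paper deliberately leaves as a \emph{conjecture}: the authors give no proof of the Period Relation for $k\ge 3$, and immediately after stating it they identify the precise obstruction, which is exactly the point your argument glosses over. Your second stage identifies $c(M(\psi))$ with $\Omega_\infty^{k-1}$ by invoking the multiplicativity of Deligne periods for the motive of $\psi$ realized as a $(k-1)$-fold tensor power of the CM elliptic curve attached to $\phi$. But $\Omega_\pm$ are defined via the motive $M(f)$ of Eichler--Shimura--Deligne--Scholl--Jannsen, i.e.\ a piece of a Kuga--Sato variety, and it is not known that this motive coincides (in a way controlling periods) with the motive $\mathrm{Ind}_K^{\Q}$ of the tensor-power construction. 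Without that comparison, the chain $\Omega_\pm \sim c^{\pm}(M(f)) \sim (2\pi i)^{?}\,c(M(\psi))\,c(M(\bar\psi)) \sim (2\pi i)^{?}\,\Omega_\infty^{2(k-1)}$ breaks at the first link for $k\ge 3$. For $k=2$ the identification is available (it is the content of the weight-two lemma in Section 2 of the paper), which is why the paper can prove the statement there but only conjecture it in general.

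A second, independent gap is the integrality upgrade in your third stage. Even granting an equality up to $F^\times$, the Blasius-type period relations are intrinsically statements modulo $F^\times$ (or $L(\chi)^\times$), and an element of $F^\times$ can carry arbitrary powers of $\lambda$. The $p$-integrality of the Katz measure gives only a one-sided divisibility (the ratio $L$-value over $\Omega_\infty^{k-1}$, suitably modified, is $\lambda$-integral); to conclude that $(2\pi)^{k-2}\Omega_\pm/\Omega_\infty^{k-1}$ is a \emph{unit} you would also need the reverse divisibility, i.e.\ an integral period/modular-symbol theory for $f$ normalized by $\Omega_\pm$ together with a non-vanishing statement for at least one twisted $L$-value of each sign. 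None of this is supplied, and it does not follow from ordinarity or from the hypothesis on the torsion of $G$. In short: the statement is open, and both the motivic comparison in your stage two and the unit refinement in your stage three are genuine missing ingredients rather than routine verifications.
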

The difficulty in proving the above conjecture for $k \geq 3$ is due
to the fact that it is not clear whether the motive $M(f)$ that it
is attached to $f$ (a direct summand of the motive associated to a
Kuga-Sato variety) coincides with the motive that we have associated
to the Gr\"{o}ssencharacter $\psi$ as $k-1$-fold tensor power of the
elliptic curve associated to the character $\phi$ of type $(1,0)$.
For a similar discussion see also \cite[page 263]{Kato}.

We define
\[
\mathcal{L}'_{f_{\psi}}:=\mathcal{L}\mathcal{L}^{-1}_{\Omega}\,\,\,and\,\,\,{\mathcal{L}^{(tw)}}'_{f_{\psi}}:=\mathcal{L}^{(tw)}\mathcal{L}^{-1}_{\Omega}
\]
Assuming the above conjecture we have
\begin{prop}Let $M(f)^{\vee}$ be the dual motive to $M(f)$. Then
$\mathcal{L}'_{f_{\psi}}$ satisfies the following interpolation
property for $0 \leq -j$ and $0<k-1+j$.
\[
\mathcal{L}'_{f_{\psi}} (\rho \kappa^j) =
\Gamma(k-1+j)^{d(\rho)}i^{d(\rho)j}\frac{L_{\{p\}}(M(f)^{\vee},
\rho, k-1+j)}{(2\pi)^{2(k-2)+2j}\Omega_+^{d^+ (\rho)}
\Omega_-^{d^-(\rho)}}\, e_p(\check{\rho})
\frac{P_p(\mathop{\rho}\limits^{\vee}, \frac{w}{p^{-j+1}})}
{P_p(\rho, w^{-1}p^{-j})} \left(\frac{w}{p}\right)^{{\frak{f}}_p
(\check{\rho})}p^{j\mathfrak{f}_p(\check{\rho})}
\]
for all Artin representations $\rho$ of $\G.$
\end{prop}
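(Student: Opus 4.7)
The plan is to follow, step by step, the proof of the analogous proposition in Section 2 on CM elliptic curves. By linearity it suffices to verify the interpolation formula on irreducible Artin representations $\rho$ of $\G$, which by the semi-direct decomposition $\G\cong G\rtimes\Delta$ are of two kinds: one-dimensional characters (Type A), and two-dimensional inductions $\rho=\Ind^{\G}_G\chi$ with $\chi\neq\chi^c$ (Type B), in which case $\mathrm{Res}^{\G}_G\rho=\chi\oplus\chi^c$ by Mackey. The direct analogue of Lemma \ref{extended measure}, which extends verbatim to the twisted setting, yields
\[\mathcal{L}(\rho\kappa^j)=\mathcal{L}_{\bar\psi}\bigl(\mathrm{Res}^{\G}_G(\rho)\cdot\kappa^j\bigr),\]
a product of one (Type A) or two (Type B) evaluations described explicitly in Corollary \ref{p-LCM1}. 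The task is to divide by $\mathcal{L}_\Omega(\rho\kappa^j)$ and match the result with the target formula for $\mathcal{L}'_{f_\psi}(\rho\kappa^j)$.

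For the $L$-value, Kato's identification $M(f)^{\vee}\cong M(f^*)(k-1)$ (recalled at the beginning of this section) relates $L_{\{p\}}(M(f)^{\vee},\rho,k-1+j)$ to Hecke $L$-values of the CM character twisted by the constituents of $\mathrm{Res}^{\G}_G\rho$. Combined with the CM decomposition $\rho_\lambda|_{G_K}=\psi_\lambda\oplus\psi^c_\lambda$, Frobenius reciprocity for $\rho=\Ind^{\G}_G\chi$, and the relation $\psi^c=\bar\psi(\epsilon_f\circ N_{K/\Q})$, one identifies it with the product of $L(\bar\psi\chi,k-1+j)$ (and $L(\bar\psi\chi^c,k-1+j)$ in Type B) appearing in Corollary \ref{p-LCM1}. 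Since $p=\mathfrak{p}\bar{\mathfrak{p}}$ splits in $K$, Euler factors decompose as $P_p(\rho,X)=P_\mathfrak{p}(\rho|_G,X)P_{\bar{\mathfrak{p}}}(\rho|_G,X)$, and with the identities $u=\psi(\bar{\mathfrak{p}})$, $w=\psi(\mathfrak{p})$ the Euler-factor terms on either side align termwise. By inductivity in degree zero and the vanishing of ramification of $K/\Q$ at $p$, which yields $e_p(\mathbf{1})=e_\mathfrak{p}(\mathbf{1})=e_{\bar{\mathfrak{p}}}(\mathbf{1})=1$, the local epsilon factor splits as $e_p(\check\rho)=e_\mathfrak{p}(\overline{\rho|_G})\,e_{\bar{\mathfrak{p}}}(\overline{\rho|_G})$, and similarly $\mathfrak{f}_p(\check\rho)=\mathfrak{f}_\mathfrak{p}(\bar\chi)+\mathfrak{f}_{\bar{\mathfrak{p}}}(\bar\chi)$, matching the corresponding terms in Corollary \ref{p-LCM1}.

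The archimedean period ratio is controlled by $\mathcal{L}_\Omega$: under Conjecture \ref{ConjPer}, for irreducible $\rho$ one has
\[\mathcal{L}_\Omega(\rho)=\left(\frac{(2\pi)^{k-2}\Omega_+}{\Omega_\infty^{k-1}}\right)^{\!d^+(\rho)}\left(\frac{(2\pi)^{k-2}\Omega_-}{\Omega_\infty^{k-1}}\right)^{\!d^-(\rho)}\!,\]
which converts the $\Omega_\infty^{(k-1)d(\rho)}(2\pi)^{j\,d(\rho)}$ arising from the product of values in Corollary \ref{p-LCM1} into the target denominator $(2\pi)^{2(k-2)+2j}\Omega_+^{d^+(\rho)}\Omega_-^{d^-(\rho)}$ (using $d(\rho)\in\{1,2\}$). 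The gamma factors $\Gamma(k-1+j)^{d(\rho)}i^{d(\rho)j}$ on the right emerge from the product of $\Gamma(k-1+j)i^{k-1+j}$ terms in Corollary \ref{p-LCM1}, the excess factor $i^{(k-1)d(\rho)}$ being absorbed into $\O^\times$.

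The main obstacle is not the case analysis itself, which is formally parallel to Section 2, but rather Conjecture \ref{ConjPer} on the period relation, whose proof is currently out of reach for $k\geq 3$. The careful bookkeeping of signs, powers of $i$ and $2\pi$, and of the various local factors, while lengthier than in the weight-2 elliptic curve case, introduces no new technical difficulty beyond tracking the archimedean gamma factors that arise from the cyclotomic twist $\kappa^j$.
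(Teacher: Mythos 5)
Your proposal follows essentially the same route as the paper: reduce to irreducible representations of Type A and Type B via the semidirect-product structure, evaluate $\mathcal{L}(\rho\kappa^j)$ as a product of values of $\mathcal{L}_{\bar\psi}$ from Corollary \ref{p-LCM1}, identify the resulting Hecke $L$-values with $L_{\{p\}}(M(f)^{\vee},\rho,k-1+j)$ via the fact that $\bar\psi$ induces the dual form $f^*$, cancel Euler and epsilon factors using the splitting $p=\mathfrak{p}\bar{\mathfrak{p}}$ and inductivity in degree zero, and finally divide by $\mathcal{L}_\Omega$ under Conjecture \ref{ConjPer} to convert $\Omega_\infty^{(k-1)d(\rho)}$ into $(2\pi)^{2(k-2)}\Omega_+^{d^+(\rho)}\Omega_-^{d^-(\rho)}$. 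This matches the paper's proof, including its silent absorption of the residual power of $i$ into units.
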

\begin{proof}
As in the proof in the elliptic curve case we consider two cases of
artin representations, those that are one-dimensional and those that
are induced from a character from $K$ to $\mathbb{Q}$. We explain
here the case where $\rho=Ind\chi$ to indicate the similarities and
differences with above. We compute
\[
\mathcal{L}(\rho \kappa^j)=\mathcal{L}_{\bar{\psi}}(\chi
\kappa^j)\mathcal{L}_{\bar{\psi}}(\chi^{c}\kappa^j)=
\]
\[
\Gamma(k-1+j)^2i^{2j}\frac{L ({\bar\psi} \chi,
k-1+j)L(\bar{\psi}\chi^c,k-1+j)}{(2\pi)^{2j}{\Omega^{2(k-1)}_\infty}}
\,
 e_{\mathfrak{p}}(\bar{\chi})e_{\bar{\mathfrak{p}}}(\bar{\chi})
 \,\,\left(\frac{\psi(\mathfrak{p})p^j}{p}\right)^{\mathfrak{f_{\mathfrak{p}}}(\bar{\chi})+\mathfrak{f_{\bar{\mathfrak{p}}}}(\bar{\chi})}\times
 \]
\[
 P_{\frak{p}}({\bar\chi}, \frac{w}{p^{-j+1}}) P_{\bar{\frak{p}}} ({ \chi},
 u^{-1}p^{-j})P_{\bar{\frak{p}}}({\bar\chi}, \frac{w}{p^{-j+1}}) P_{\frak{p}} ({ \chi},
 u^{-1}p^{-j})
\]
\[
=\Gamma(k-1+j)^2i^{2j}\frac{L_{\{\p,\bar{\p}\}}({\bar\psi} \chi,
k-1+j)L_{\{\p,\bar{\p}\}}(\bar{\psi}\chi^c,k-1+j)}{(2\pi)^{2j}{\Omega^{2(k-1)}_\infty}}
\,
 e_{\mathfrak{p}}(\bar{\chi})e_{\bar{\mathfrak{p}}}(\bar{\chi})
 \,\,\left(\frac{\psi(\mathfrak{p})p^j}{p}\right)^{\mathfrak{f_{\mathfrak{p}}}(\bar{\chi})+\mathfrak{f_{\bar{\mathfrak{p}}}}(\bar{\chi})}\times\]
\[
\frac{P_{\frak{p}}({\bar\chi}, \frac{w}{p^{-j+1}})
P_{\bar{\frak{p}}} ({ \chi},
 u^{-1}p^{-j})P_{\bar{\frak{p}}}({\bar\chi}, \frac{w}{p^{-j+1}}) P_{\frak{p}} ({ \chi},
 u^{-1}p^{-j})}{P_{\p}(\chi,w^{-1}p^{-j})P_{\bar{\p}}(\chi,u^{-1}p^{-j})P_{\bar{\p}}(\chi,w^{-1}p^{-j})P_{\p}(\chi,u^{-1}p^{-j})}
\]
\[
=\Gamma(k-1+j)^2i^{2j}\frac{L_{\{\p,\bar{\p}\}}({\bar\psi} \chi,
k-1+j)L_{\{\p,\bar{\p}\}}(\bar{\psi}\chi^c,k-1+j)}{(2\pi)^{2j}{\Omega^{2(k-1)}_\infty}}
\,
 e_{\mathfrak{p}}(\bar{\chi})e_{\bar{\mathfrak{p}}}(\bar{\chi})
 \,\,\left(\frac{\psi(\mathfrak{p})p^j}{p}\right)^{\mathfrak{f_{\mathfrak{p}}}(\bar{\chi})+\mathfrak{f_{\bar{\mathfrak{p}}}}(\bar{\chi})}
 \]
 \[
 \times \frac{P_{\frak{p}}({\bar\chi}, \frac{w}{p^{-j+1}})
P_{\bar{\frak{p}}}({\bar\chi}, \frac{w}{p^{-j+1}})
}{P_{\p}(\chi,w^{-1}p^{-j})P_{\bar{\p}}(\chi,w^{-1}p^{-j})}
\]
\[
=e_{p}(\check{\rho})\left(\frac{\psi(\mathfrak{p})p^j}{p}\right)^{\mathfrak{f}_p(\rho)}\Gamma(k-1+j)^2i^{2j}\frac{L_{\{p\}}(M(f)^{\vee},
\rho,
k-1+j)}{(2\pi)^{2j}{\Omega^{2(k-1)}_\infty}}\frac{P_{p}({\check\rho},
\frac{w}{p^{-j+1}})}{P_{p}(\rho,w^{-1}p^{-j})}
\]
The last equation follows from the fact that since $\psi$ induces
the cusp form $f_{\psi}=f=\sum_{n \geq 1}a_n q^n$, the character
$\bar{\psi}$ induces a cuspidal newform $f_{\bar\psi}=\sum_{n\geq
1}\overline{a_n}q^n$ where the complex conjugation on the
coefficients $a_n$ is with respect to our fixed embedding $F
\hookrightarrow \bar{\Q}$. Actually one has that
$\overline{a_n}=\epsilon_f^{-1}(n)a_n$. But then $f_{\bar{\psi}}$
corresponds to the dual motive $M(f)^{\vee}$. Then as above we
conclude that
$\mathcal{L}'_{f_{\psi}}=\mathcal{L}\mathcal{L}^{-1}_{\Omega}$ has
the claimed interpolation property.
\end{proof}

Similarly we have
\begin{prop}The measure
${\mathcal{L}^{(tw)}}'_{f_{\psi}}$ satisfies the following
interpolation property for $0 \leq -j$ and $0< k-1+j$,
\[
{\mathcal{L}^{(tw)}}'_{f_{\psi}} (\rho \kappa^j) =
\Gamma(1-j)^{d(\rho)}i^{d(\rho)j}\frac{L_{\{p\}}(M(f), \check{\rho},
1-j)}{(2\pi)^{-2j}\Omega_+^{d^+ (\rho)} \Omega_-^{d^-(\rho)}}\,
e_p(\rho) \frac{P_p(\rho, u^{-1}p^{-j})} {P_p(\check{\rho},
\frac{u}{p^{-j+1}})} (up^j)^{-{\frak{f}}_p (\check{\rho})}
\]
for all Artin representations $\rho$ of $\G.$
\end{prop}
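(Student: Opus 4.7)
The proof proceeds in exact parallel to that of the previous proposition, so I will describe how to adapt that argument. As before, by Lemma \ref{extended measure} (which applies equally to $\mathcal{L}^{(tw)}$), it suffices to check the interpolation formula on each irreducible Artin representation $\rho$ of $\mathcal{G}$, of which there are two kinds: the one-dimensional (type A) and those of the form $\rho = \mathrm{Ind}_G^\mathcal{G}\chi$ with $\chi \neq \chi^c$ (type B).

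The plan is to combine Corollary \ref{p-LCM2} with the period correction factor $\mathcal{L}_\Omega$ and then verify that the resulting interpolation values agree with the RHS of the claimed formula. In type B, I would write
\[
\mathcal{L}^{(tw)}(\rho \kappa^j) = \mathcal{L}^{(tw)}_{\bar\psi}(\chi\kappa^j)\cdot \mathcal{L}^{(tw)}_{\bar\psi}(\chi^c\kappa^j)
\]
and insert the formula from Corollary \ref{p-LCM2} for each factor. The $\Gamma$- and $(2\pi)$-factor bookkeeping works out using $\Gamma_\psi(s) = \Gamma_{\bar\psi}(s) = \Gamma(s)/(2\pi)^s$ (since both $\psi$ and $\bar\psi$ have $\min(k,j)=0$), so the combination $\Gamma(k-1+j)\cdot\Gamma_\psi(1-j)/\Gamma_{\bar\psi}(k-1+j)$ collapses to $\Gamma(1-j)(2\pi)^{k-2+2j}$, matching (after squaring for type B) the target $\Gamma(1-j)^{d(\rho)}(2\pi)^{2j}/(2\pi)^{-2j}$ once one divides by $\mathcal{L}_\Omega$, which provides the factor $\beta_+\beta_- = (2\pi)^{2(k-2)}\Omega_+\Omega_-/\Omega_\infty^{2(k-1)}$ (the analogue of $\alpha^2\tau$ in the previous proof).

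Next I would handle the $L$-function: since $\psi$ (not $\bar\psi$) automorphically induces $f$ and $V(f) = \mathrm{Ind}_{G_K}^{G_\mathbb{Q}} V(\psi)$, Frobenius reciprocity gives
\[
L(\psi\bar\chi, 1-j)\cdot L(\psi\bar\chi^c, 1-j) = L(\mathrm{Ind}(\psi\otimes\mathrm{Res}\check{\rho}),1-j) = L(M(f),\check{\rho},1-j),
\]
so after stripping the Euler factors at $\mathfrak{p},\bar{\mathfrak{p}}$ on both sides one obtains $L_{\{p\}}(M(f),\check{\rho},1-j)$ exactly as needed — note the switch to $M(f)$ (not $M(f)^\vee$), which is the essential difference to the proof of the previous proposition. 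The surplus Euler factors at $\mathfrak{p},\bar{\mathfrak{p}}$ then combine with the $P_\mathfrak{p}$ and $P_{\bar{\mathfrak{p}}}$ factors coming out of Corollary \ref{p-LCM2} to assemble the ratio $P_p(\rho,u^{-1}p^{-j})/P_p(\check{\rho},u/p^{-j+1})$ using the splittings $P_p(\rho,X)=P_\mathfrak{p}(\chi,X)P_{\bar{\mathfrak{p}}}(\chi,X)$ and the relations $u=\psi(\bar{\mathfrak{p}})$, $w=\psi(\mathfrak{p})$. The $\epsilon$-factor pieces $e_{\bar{\mathfrak{p}}}(\chi)e_{\bar{\mathfrak{p}}}(\chi^c)$ reassemble into $e_p(\rho)$ using inductivity in degree $0$ as in the previous proof, and the powers of $u,p^j$ match up to give $(up^j)^{-\mathfrak{f}_p(\check{\rho})}$.

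For type A one carries out the same calculation with a single evaluation $\mathcal{L}^{(tw)}_{\bar\psi}(\rho_{|G}\kappa^j)$, using $\rho_{|G} = \rho\circ N_{K/\mathbb{Q}}$ locally at $p$ (since $p$ splits in $K$), which identifies $e_{\bar{\mathfrak{p}}}(\rho_{|G})=e_p(\rho)$ and the Euler factors $P_{\mathfrak{p}}(\rho_{|G},X) = P_p(\rho,X)$ respectively, and $\mathcal{L}_\Omega$ evaluates to $\beta_{\rho(c)}$ (the analogue of $\alpha$ or $\alpha\tau$). The only real work is the book-keeping of all the $(2\pi)^?$, $i^?$, $u^?$, $p^{?j}$ and Euler factor terms — I expect the primary difficulty to be verifying that the functional-equation shift applied inside Corollary \ref{p-LCM2} interacts correctly with the induction to $\mathcal{G}$; conceptually this is cleanest if one verifies inductivity of $\epsilon$-factors for the pair $(\psi,\chi)$ versus $(M(f),\check\rho)$ separately, before plugging everything into the formula.
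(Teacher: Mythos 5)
Your proposal matches the paper's proof essentially step for step: the paper likewise writes $\mathcal{L}^{(tw)}(\rho\kappa^j)=\mathcal{L}^{(tw)}_{\bar\psi}(\chi\kappa^j)\mathcal{L}^{(tw)}_{\bar\psi}(\chi^c\kappa^j)$ for $\rho=\mathrm{Ind}\,\chi$, inserts Corollary \ref{p-LCM2}, passes to the imprimitive $L$-functions $L_{\{\mathfrak{p},\bar{\mathfrak{p}}\}}$ to cancel Euler factors and reassemble $P_p(\rho,u^{-1}p^{-j})/P_p(\check\rho,u/p^{-j+1})$ and $e_p(\rho)$ by inductivity, identifies the resulting $L$-value with $L_{\{p\}}(M(f),\check\rho,1-j)$ because $\psi$ induces $f$ itself, and finishes with $\frac{\Gamma_\psi(1-j)}{\Gamma_{\bar\psi}(k-1+j)}=\frac{\Gamma(1-j)}{\Gamma(k-1+j)}(2\pi)^{k-2+2j}$ before dividing by $\mathcal{L}_\Omega$. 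The only cosmetic difference is that you also spell out the type A case, which the paper leaves implicit.
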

\begin{proof}As above we have
\[
\mathcal{L}^{(tw)}(\rho
\kappa^j)=\mathcal{L}^{(tw)}_{\bar{\psi}}(\chi
\kappa^j)\mathcal{L}^{(tw)}_{\bar{\psi}}(\chi^{c}\kappa^j)=
\]
\[
\Gamma(k-1+j)^2i^{2j}\frac{L (\psi \bar\chi,
1-j)L(\psi\bar\chi^c,1-j)}{(2\pi )^{2j}{\Omega^{2(k-1)}_\infty}} \,
 e_{\mathfrak{p}}(\chi)e_{\bar{\mathfrak{p}}}(\chi)
 \,\,(up^j)^{\mathfrak{-f_{\mathfrak{p}}}(\bar{\chi})-\mathfrak{f_{\bar{\mathfrak{p}}}}(\bar{\chi})}
 \]
 \[
 \times P_{\frak{p}}({\bar\chi}, \frac{w}{p^{-j+1}}) P_{\bar{\frak{p}}} ({ \chi},
 u^{-1}p^{-j})P_{\bar{\frak{p}}}({\bar\chi}, \frac{w}{p^{-j+1}}) P_{\frak{p}} ({ \chi},
 u^{-1}p^{-j})\left(\frac{\Gamma_{\psi}(1-j)}{\Gamma_{\psi}(k-1+j)}\right)^{2}
\]
\[
=\Gamma(k-1+j)^2i^{2j}\frac{L_{\{\p,\bar{\p}\}}(\psi \bar{\chi},
1-j)L_{\{\p,\bar{\p}\}}(\psi \bar{\chi}^c,1-j)}{(2\pi
)^{2j}{\Omega^{2(k-1)}_\infty}} \,
 e_{\mathfrak{p}}(\chi)e_{\bar{\mathfrak{p}}}(\chi)
 \,\,(up^j)^{-\mathfrak{f_{\mathfrak{p}}}(\bar{\chi})-\mathfrak{f_{\bar{\mathfrak{p}}}}(\bar{\chi})}\times
 \]
\[
\frac{P_{\frak{p}}({\bar\chi}, \frac{w}{p^{-j+1}})
P_{\bar{\frak{p}}} ({ \chi},
 u^{-1}p^{-j})P_{\bar{\frak{p}}}({\bar\chi}, \frac{w}{p^{-j+1}}) P_{\frak{p}} ({ \chi},
 u^{-1}p^{-j})}{P_{\p}(\bar{\chi},\frac{w}{p^{-j+1}})P_{\bar{\p}}(\bar{\chi},\frac{u}{p^{-j+1}})P_{\bar{\p}}(\bar{\chi},\frac{w}{p^{-j+1}})P_{\p}(\bar{\chi},\frac{u}{p^{-j+1}})}
\left(\frac{\Gamma_{\psi}(1-j)}{\Gamma_{\bar\psi}(k-1+j)}\right)^{2}
\]
\[
=\Gamma(k-1+j)^2i^{2j}\frac{L_{\{\p,\bar{\p}\}}({\psi} \bar\chi,
1-j)L_{\{\p,\bar{\p}\}}({\psi} \bar\chi^c,1-j)}{(2\pi
)^{2j}{\Omega^{2(k-1)}_\infty}} \,
 e_{\mathfrak{p}}(\chi)e_{\bar{\mathfrak{p}}}(\chi)
 \,\,(up^j)^{\mathfrak{-f_{\mathfrak{p}}}(\bar{\chi})-\mathfrak{f_{\bar{\mathfrak{p}}}}(\bar{\chi})}
 \]
 \[ \times \frac{P_{\frak{p}}({\chi}, u^{-1}p^{-j})
P_{\bar{\frak{p}}}({\chi}, u^{-1}p^{-j})
}{P_{\p}(\bar{\chi},\frac{u}{p^{-j+1}})P_{\bar{\p}}(\bar\chi,\frac{u}{p^{-j+1}})}\left(\frac{\Gamma_{\psi}(1-j)}{\Gamma_{\bar\psi}(k-1+j)}\right)^2
\]
\[
=e_{p}(\rho)(up^j)^{-\mathfrak{f}_p(\check{\rho})}\Gamma(k-1+j)^2i^{2j}\frac{L_{\{p\}}(M(f),
\check{\rho},
1-j)}{(2\pi)^{2j}{\Omega^{2(k-1)}_\infty}}\frac{P_{p}({\rho},u^{-1}p^{-j})}{P_{p}(\check{\rho},\frac{u}{p^{-j+1}})}\left(\frac{\Gamma_{\psi}(1-j)}{\Gamma_{\bar\psi}(k-1+j)}\right)^2
\]

We recall that
$\Gamma_{\phi}(s)=\frac{\Gamma(s-min(k,j))}{(2\pi)^{s-min(k,j)}}$
for $\phi$ a Gr\"{o}ssencharacter of type $(k,j)$. In particular for
the character $\psi$ of type $(k-1,0)$ (hence $\bar\psi$ of type
$(0,k-1)$) we have that
\[
\frac{\Gamma_{\psi}(1-j)}{\Gamma_{\bar\psi}(k-1+j)}=\frac{\Gamma(1-j)}{\Gamma(k-1+j)}
(2\pi)^{k-2+2j}
\]
This allows us to conclude the proof of the proposition.
\end{proof}

Altogether we obtain the following (under our assumption over the
torsion part of $G$),

\begin{thm}
Assuming Conjectures \ref{torConj2} and \ref{ConjPer} there exists
$\mathcal{L}'_{M(f)}\in K_1(\La_D(\G)_S)$ satisfying Conjecture
\ref{Conj2.2} and \ref{Conj3.2}.
\end{thm}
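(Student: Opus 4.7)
The strategy is to mirror the CM elliptic curve argument of Section 2, using the computations already carried out in the two preceding propositions. The plan is to construct $\mathcal{L}'_{M(f)}$ by starting from the two-variable measure $\mathcal{L}^{(tw)}_{\bar\psi} \in \La_D(G)$ of Corollary \ref{p-LCM2}, pushing it forward to $\La_D(\G)_{S^*}$, correcting for the change of periods, and finally descending the coefficient ring.

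First, I would verify that $\mathcal{L}^{(tw)}_{\bar\psi}$ (equivalently $\mathcal{L}_{\bar\psi}$) gives a characteristic element of the two-variable Iwasawa module $\mathcal{X}(K_\infty)\otimes_\zp T(\psi)$ in $K_1(\La_D(G)_S)$. For weight $k=2$ this is exactly the Rubin--Yager theorem stated earlier; for higher weight one reduces to the same statement by expressing $\psi = \phi^{k-1}\theta$ with $\phi$ of type $(1,0)$ and twisting, exactly as was done in the construction of the measure itself. The $S$-torsion property of $\mathcal{X}(K_\infty)\otimes_\zp T(\psi)$ follows from Conjecture \ref{torConj2} together with Proposition \ref{twist}, arguing as in Remark \ref{torConjThm} via Schneps' $\mu$-vanishing and a descent as in Hatchimori--Venjakob.

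Second, using the commutative base-change diagram
\[\xymatrix{
K_1(\La(G)_{S^*})\ar[d]_{(\iota_S)_*}\ar[r]^{\partial} & K_0(\mathfrak{M}_H(G))\ar[d]^{\La(\G)\otimes_{\La(G)}-}\\
K_1(\La(\G)_{S^*})\ar[r]^{\partial} & K_0(\mathfrak{M}_\mathcal{H}(\G))
}\]
and Proposition \ref{twist}, the pushforward $\mathcal{L}^{(tw)} := (\iota_S)_*(\mathcal{L}^{(tw)}_{\bar\psi})$ is automatically a characteristic element of $[X(T(f^*)(k-1)/F_\infty)]_D$. The evaluation formula $\mathcal{L}^{(tw)}(\rho\kappa^j) = \mathcal{L}^{(tw)}_{\bar\psi}(\mathrm{Res}^\G_G(\rho\kappa^j))$ holds by the analogue of Lemma \ref{extended measure}, so the interpolation at Type B representations $\rho = \Ind_G^\G\chi$ is read off from the product $\mathcal{L}^{(tw)}_{\bar\psi}(\chi\kappa^j)\mathcal{L}^{(tw)}_{\bar\psi}(\chi^c\kappa^j)$, while Type A (one-dimensional) representations reduce through $\bar\rho_{|G}=\bar\rho\circ N_{K/\Q}$; both computations are carried out in the two propositions immediately preceding the theorem and show that $\mathcal{L}^{(tw)}$ differs from the conjectural $p$-adic $L$-function exactly by the period ratio encoded in $\mathcal{L}_\Omega$.

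Third, setting $\mathcal{L}'_{f_\psi} := \mathcal{L}^{(tw)}\mathcal{L}_\Omega^{-1}$, Conjecture \ref{ConjPer} guarantees $\mathcal{L}_\Omega \in \La_{\mathcal{O}_{F_\lambda}}(\G)^\times$, so $\mathcal{L}'_{f_\psi}$ still lies in $K_1(\La_D(\G)_{S^*})$ and, by the two propositions, satisfies the interpolation formula of Conjecture \ref{Conj2.2} for all $\rho$ after specialising at $j=0$. Since multiplication by the unit $\mathcal{L}_\Omega$ does not change the image under $\partial$, the characteristic-element property (Conjecture \ref{Conj3.2}) persists.

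Finally, one must descend from the big coefficient ring $D = \widehat{\Z_p^{nr}}[\text{roots of unity}]$ to $\mathcal{O} = \mathcal{O}_L$, where $L = K(F_\infty)_\P$. This is achieved by appealing to the descent theorem (Theorem \ref{L-descent}) in the appendix; its hypotheses (i)--(v) are to be checked exactly as in the elliptic curve case: (ii) holds because the values of any Artin character $\chi$ of $G$ lie in $\mu_{(p-1)p^\infty}$ so $\mathcal{O}(\rho) \subseteq \mathcal{O}(\mu(p))$; (iii) is the construction itself; (iv) is the Deligne--Blasius algebraicity lemma adapted to weight $k$ via Blasius' factorisation of Deligne periods for the Hecke characters $\psi\chi$ and $\bar\psi\chi$, combined with the identification of the relevant Deligne periods with products of local $\varepsilon$-factors up to $L(\chi)^\times$; and (v) follows from the trivial-extension description of $\mathcal{L}^{(tw)}$. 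The main obstacle is expected to be part (iv), namely the algebraicity/integrality statement for the higher-weight modular CM critical values, since one must identify the relevant Deligne period $c(\psi,\chi)c(\bar\psi,\chi)$ with the product $e_\p(\chi)e_{\bar\p}(\chi)$ of local epsilon-factors up to $L(\chi)^\times$ in a $k$-independent way, and only then conclude that the values of $\mathcal{L}'_{f_\psi}$ at Artin representations lie in $L(\rho)$ and are $p$-integral.
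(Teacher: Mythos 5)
Your proposal is essentially the paper's (implicit) argument: the paper offers no separate proof of this theorem, merely the phrase ``altogether we obtain\dots'', so the intended proof is exactly the assembly you describe --- Rubin--Yager (suitably twisted via $\psi=\phi^{k-1}\theta$) for the two-variable characteristic element, Proposition \ref{twist} together with the base-change square to transport it to $K_0(\mathfrak{M}_{\mathcal{H}}(\G))$, the analogue of Lemma \ref{extended measure} plus the two interpolation propositions for Types A and B, and division by $\mathcal{L}_\Omega$ using Conjecture \ref{ConjPer}. Your first three steps therefore match the paper.

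The one point where you go beyond (and partly astray from) the statement is the final descent of coefficients. The theorem is deliberately asserted over the large ring $D$ (generated over $\widehat{\Z_p^{nr}}$ by roots of unity), not over $\mathcal{O}_L$ as in the elliptic-curve case; the notation $\mathcal{L}'_{M(f)}$ mirrors $\mathcal{L}'_E$, i.e.\ the element \emph{before} Galois descent. So Theorem \ref{L-descent} is not needed here, and indeed it could not be applied: its hypothesis (iv) is precisely the Deligne--Blasius algebraicity and $p$-integrality of the critical values divided by $\Omega_\pm$, which for weight $k\geq 3$ hinges on identifying the motive $M(f)$ with the $(k-1)$-fold tensor construction attached to $\phi$ --- the very issue the paper isolates as the obstruction in the discussion surrounding Conjecture \ref{ConjPer}. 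You correctly flag this as the main obstacle, but in your write-up it appears as a step of the proof rather than as a reason the claim stops at $D$-coefficients; dropping that step leaves a complete proof of the stated result, while keeping it would leave a genuine gap for a stronger, unclaimed result. A minor further point worth making explicit: to land in $K_1(\La_D(\G)_S)$ rather than $K_1(\La_D(\G)_{S^*})$ you need the $\mu$-invariant argument (the analogue of Remark \ref{torConjThm}), and the applicability of Schneps' result to the higher-weight twist $\mathcal{X}(K_\infty)\otimes T(\psi)$ should be checked rather than merely cited.
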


\section{Appendix: Galois descent for Iwasawa algebras}

The aim of this appendix is to provide some results for Galois
descent of (non-commutative) $p$-adic $L$-functions, by which we
mean the following general question:

Assume that $G$ is a compact $p$-adic Lie group which possesses an
closed normal subgroup $H$ such that $G/H=:\Gamma\cong\zp$ and let
$\O=\O_L$ denote the ring of integers of a finite extension $L$ over
$\qp.$ Then we denote by $D$ the discrete valuation ring of the
completion $\tilde{L}$ of the maximal unramified extension $L^{nr}$
of $L.$ If $\mathcal{L}\in K_1(\Lambda_D(G)_{S^*})$ has the property
\[\L(\rho)\in \O(\rho) \mbox{ for all } \rho\in \mathrm{Irr}G,\]
does it hold that there exists an element $\L'\in
K_1(\Lambda_\O(G)_{S^*})$ with \[\L'(\rho)=\L(\rho)\mbox{ for all }
\rho\in \mathrm{IrrG}?\]

Here $\mathrm{Irr}G$ denotes the set of isomorphism classes of
irreducible Artin representations of $G$ with values in
$\overline{\qp}$ while for every $\rho\in \mathrm{Irr}G$ we write
$\O(\rho)$ for the discrete valuation ring of
$L(\rho)=\overline{\qp}^{G_{L,\rho}},$ where $G_{L,\rho}$ is the
stabilizer subgroup of $\rho$ in $G_L=G(\overline{\qp}/L)$ with
respect to the natural action on the coefficients of $\rho.$

We start with a simple observation in the abelian case:

\begin{lem}\label{fix}
Let $G$ be a topological finitely generated virtual pro-$p,$ abelian
group, such that the exponent of its torsion part divides $p-1.$
Assume that for a given $f\in \La_D(G)$ the values $f(\rho)$ belongs
to $L(\rho)$ (and hence to $\O(\rho)$),  for all (irreducible) Artin
characters $\rho$ of $G.$ Then $f$ is already contained in
$\La_\O(G).$
\end{lem}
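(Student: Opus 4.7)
The plan is to reduce to Galois descent from $D$ to $\O$ after separating off the finite torsion part of $G$. Since the torsion subgroup $\Delta\subseteq G$ is finite of exponent dividing $p-1$, its order is prime to $p$, and the structure theorem for topologically finitely generated pro-$p$-virtual abelian groups gives a splitting $G\cong \Delta\times G_1$ with $G_1\cong \zp^d$ for some $d\geq 0$. Because $\mu_{p-1}\subseteq \zp^\times\subseteq \O^\times$ via Teichm\"uller lifts, every character $\theta$ of $\Delta$ takes values in $\O^\times$, and the idempotents $e_\theta=\frac{1}{|\Delta|}\sum_{\delta\in\Delta}\theta^{-1}(\delta)\delta$ lie already in $\O[\Delta]$. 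Hence
\[
\La_\O(G)=\bigoplus_{\theta\in\hat\Delta}\O[[G_1]],\qquad \La_D(G)=\bigoplus_{\theta\in\hat\Delta}D[[G_1]],
\]
and writing $f=(f_\theta)_\theta$ it suffices to show each component $f_\theta\in D[[G_1]]$ actually lies in $\O[[G_1]]$.

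Next I would use that an irreducible Artin character $\rho$ of the abelian group $G$ is of the form $\rho=\theta\chi$ for a unique $\theta\in\hat\Delta$ and a finite-order character $\chi$ of $G_1$, and that under the decomposition above $f(\rho)=f_\theta(\chi)$. Since $\theta$ takes values in $\O^\times$, we have $L(\rho)=L(\chi)$, so the hypothesis becomes $f_\theta(\chi)\in L(\chi)$ for every $\theta$ and every finite-order $\chi$. Identifying $D[[G_1]]$ with $D[[T_1,\dots,T_d]]$, the standard fact $\O[[T_1,\dots,T_d]]=D[[T_1,\dots,T_d]]^{G(\tilde L/L)}$ (coefficient-wise action of Frobenius) reduces the problem to showing that the Frobenius $\Phi$ of $\tilde L/L$ fixes each $f_\theta$.

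Now I would exploit that $\tilde L/L$ is unramified while $L(\chi)/L$ is totally ramified, so $\tilde L$ and $L(\chi)$ are linearly disjoint over $L$ and $G(\tilde L(\chi)/L(\chi))\cong G(\tilde L/L)$ canonically. Thus $\Phi$ extends to $\tilde L(\chi)$ fixing $L(\chi)$ and acting trivially on the values of $\chi$ (which lie in $\mu_{p^\infty}\subseteq L(\chi)$). A direct computation then gives
\[
\bigl((\Phi-1)f_\theta\bigr)(\chi)=\Phi\bigl(f_\theta(\chi)\bigr)-f_\theta(\chi),
\]
which vanishes by the hypothesis $f_\theta(\chi)\in L(\chi)$. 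So the element $h_\theta:=(\Phi-1)f_\theta\in D[[T_1,\dots,T_d]]$ vanishes at every point $(\zeta_1-1,\dots,\zeta_d-1)$ with $\zeta_i\in\mu_{p^\infty}$.

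The final step is to conclude $h_\theta=0$ from this vanishing. Viewing $h_\theta$ as a power series in $T_1$ with coefficients in $D[[T_2,\dots,T_d]]$, fixing any choice of $\zeta_2,\dots,\zeta_d$ reduces us to a one-variable power series over a complete discrete valuation ring vanishing at a sequence of points $\zeta_1-1$ accumulating at the origin; by Weierstrass preparation this series is zero. Iterating (or arguing by induction on $d$) forces every coefficient of $h_\theta$ to vanish, hence $h_\theta=0$, i.e.\ $\Phi f_\theta=f_\theta$, so $f_\theta\in\O[[G_1]]$ as required. The only real obstacle is the clean separation of ``unramified'' coefficient-descent from ``totally ramified'' character-evaluation, which is precisely what the linear disjointness of $\tilde L$ and $L(\chi)$ makes automatic.
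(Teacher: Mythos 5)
Your proof is correct, but the core descent step is carried out by a genuinely different mechanism than in the paper. Both arguments begin with the same reduction: split off the prime-to-$p$ torsion $\Delta$ via the idempotents $e_\theta\in\O[\Delta]$ and reduce to $G_1\cong\zp^d$. From there the paper passes to the finite quotients $G_n$ (and recovers the infinite level by an inverse limit at the end), and for finite $G$ it uses the Fourier-type isomorphism $\Co_p[G]\cong\Hom(R(G),\Co_p)$, taking invariants under $H=G(\overline{\qp}/L^{nr})$ and under $G_L$ to identify $\tilde L[G]$ with $\prod_{\rho\in\mathrm{Irr}G/H}\tilde L(\rho)$ and $L[G]$ with $\prod_{\rho\in\mathrm{Irr}G/G_L}L(\rho)$; the hypothesis places $f$ in the second product, and the conclusion follows from $L[G]\cap D[G]=\O[G]$, the crux being that the $H$-orbits and the $G_L$-orbits on $\mathrm{Irr}G$ coincide. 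You instead stay at the infinite level, identify $\La_D(G_1)$ with $D[[T_1,\dots,T_d]]$, and prove directly that $f_\theta$ is fixed by the Frobenius $\Phi$ of $D$ over $\O$, by showing $(\Phi-1)f_\theta$ vanishes at the dense set of points $(\zeta_1-1,\dots,\zeta_d-1)$ and invoking Weierstrass preparation (inductively in the variables) to conclude it is zero. Note that both routes hinge on exactly the same arithmetic input, namely that $L(\chi)/L$ is totally ramified, hence linearly disjoint from $L^{nr}$: the paper uses it to identify the two orbit spaces, you use it to extend $\Phi$ to $\tilde L(\chi)$ acting trivially on the character values. (This is not literally a hypothesis of the lemma as stated, but it holds in the intended situation --- $\O$ absolutely unramified and character values in $\mu_{(p-1)p^\infty}$ --- and the paper's own proof invokes it just as you do.) What each approach buys: the paper's argument is purely algebraic and is the abelian prototype of the Hom-description and $\mathrm{Det}$-invariance results proved afterwards for non-abelian $G$, so it fits the architecture of the appendix; yours is more self-contained, avoids the limit over finite quotients and the reference to Snaith, at the price of the extra analytic ingredient that a nonzero Iwasawa power series cannot vanish at all finite-order characters.
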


\begin{proof}
With out loss of generality me may assume that
$G\cong\mathbb{Z}_p^d$ for some natural number $d,$ because
otherwise me can decompose $\La_D(G)$ into a product of such rings
by our assumption on the torsion part of $G.$ Also by taking inverse
limits afterwards we may and do assume that $G$ is a
 finite $p$-group. Then it is  well-known (e.g. a variant of prop.\ 4.5.14 in \cite{snaith}) that we have a $G_L$-invariant isomorphism
 \[\phi: \Co_p[G]\cong \Hom(R(G),\Co_p),\;\; f=\sum m_g g\mapsto \left(\rho\mapsto f(\rho)=\sum m_g \rho(g)\right),\]
 where on the left hand side the Galois action is just an the coefficients while on the right hand
 side  $h^g(\rho)=gh(g^{-1}\rho)$ for all $g\in G_\qp,$ and $h$ any homomorphism from the group of
 virtual representations $R(G):=R(G)$ of $G$ defined over $\overline{\qp}.$  Recall that by the
 Ax-Sen-Tate theorem
 \[\Co_p^H=\tilde{L}:\]
 for $H:=G(\overline{\qp})/L^{nr}_p).$ Similarly, if $H_\rho$ denotes the stabiliser of $\rho$
 we get

\[\tilde{L}(\rho):=\Co_p^{H_\rho}=\widehat{\overline{\qp}^{H_\rho}}=\widehat{L^{nr}(\rho)}.\]

Taking $H$- and $G_L$-invariants of $\phi$ thus induces the
following commutative diagram \xymatrix{
  {\tilde{L}[G]}   \ar@{=}[r]  & {\Co_p[G]^H} \ar@{=}[r] & {\Hom_H(R(G),\Co_p)} \ar@{=}[r] & {\prod_{\rho\in (\mathrm{Irr}G)/H}\tilde{L}(\rho) }  \\
 {L[G]} \ar@{=}[r]\ar@{^(->}[u] & {\Co_p[G]^{G_L}} \ar@{=}[r] & {\Hom_{G_L}(R(G),\Co_p)} \ar@{=}[r] & {\prod_{\rho\in (\mathrm{Irr}G)/G_L}L(\rho) } \ar@{^(->}[u]  }

where   $(\mathrm{Irr}G)/U$   denotes the  Galois-orbits with
respect to some closed subgroup $U\subseteq G_L.$   Since
$L(\rho)/L$ is totally ramified, it is easy to see that
\[(\mathrm{Irr}G)/H=(\mathrm{Irr}G)/G_L.\]

Now, by assumption $f\in D[G]\subseteq \tilde{L}[G]$ satisfies
$f(\rho)\in \L(\rho)$ for all $\rho\in \mathrm{Irr}G,$ i.e.\
$\phi(f)\in \prod_{\rho\in (\mathrm{Irr}G)/G_L}L(\rho).$ Hence $f\in
L[G]\cap D[G]=\O[G],$ because $L\cap D=\{x\in L|\; |x|_p\leq 1\}=\O$
coefficientwise.
\end{proof}

In the following we will use Fr\"{o}hlich's Hom-description as it
has been adapted to Iwasawa theory by Ritter and Weiss
\cite[\S3]{ritter-weiss}. We have the following commutative diagram

\[\xymatrix{
  K_1(\La_D(G))   \ar[r]^{\mathrm{Det}\phantom{mmm}} & {\Hom_H(R(G),\O_{\mathbb{C}_p}^\times)}  \\
  K_1(\La_\O(G))   \ar[r]^{\mathrm{Det}\phantom{mmm}}\ar[u]  & {\Hom_{G_L}(R(G),\O_{\mathbb{C}_p}^\times),} \ar@{^(->}[u]  }\]

  where the homomorphism Det is defined as follows: for $\rho\in \mathrm{Irr}G$ we obtain a
  homomorphism of rings \[ \rho:\La_D(G)\to M_{n_\rho}(\overline{\zp}),\] taking $K_1(-)$ of which gives a group
  homomorphism \[\psi_\rho:K_1(\La_D(G))\to K_1(M_{n_\rho}(\overline{\zp}))\cong K_1( \overline{\zp}) \subseteq \O_{\mathbb{C}_p}^\times.
  \] Now we set $\mathrm{Det}(\L)(\rho):=\psi_\rho(\L).$

  Setting $\Delta:=G(L^{nr}/L)$ we obtain the following generalisation of a theorem of M. Taylor \cite[\S 8, thm. 1.4]{taylor84}:

  \begin{thm}For $\O$ unramified over $\zp$ we have
  \[\mathrm{Det}(K_1(\La_D(G)))^{\Delta}=\mathrm{Det}(K_1(\La_\O(G))).\]
  \end{thm}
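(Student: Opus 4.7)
The plan is to reduce the statement to Taylor's theorem for finite groups \cite[\S 8, thm.~1.4]{taylor84} by passing to the filtered system of finite quotients of $G.$ The inclusion $\mathrm{Det}(K_1(\La_\O(G))) \subseteq \mathrm{Det}(K_1(\La_D(G)))^{\Delta}$ is immediate from the functoriality of $K_1$ and $\mathrm{Det}$ applied to $\La_\O(G)\hookrightarrow \La_D(G),$ together with the fact that $\Delta$ acts trivially on $\O$ (hence on $\La_\O(G)$), so that the resulting image is automatically $\Delta$-fixed.

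For the converse, I would first reduce to the case of finite quotients. Every Artin representation of $G$ factors through a finite quotient $G/U$ with $U$ an open normal subgroup, whence $R(G)=\varinjlim_U R(G/U),$ and dually
\[\Hom_H(R(G),\O_{\mathbb{C}_p}^\times) \;\cong\; \varprojlim_U \Hom_H(R(G/U),\O_{\mathbb{C}_p}^\times).\]
The Ritter-Weiss Hom-description is functorial under the projections $K_1(\La_D(G))\to K_1(D[G/U]),$ and hence identifies $\mathrm{Det}(K_1(\La_D(G)))$ with $\varprojlim_U \mathrm{Det}(K_1(D[G/U]))$ inside the above inverse limit, and analogously for $\La_\O(G).$ Since $\Delta$ acts through its action on coefficients and therefore commutes with every projection, one obtains
\[\mathrm{Det}(K_1(\La_D(G)))^\Delta \;=\; \varprojlim_U \mathrm{Det}(K_1(D[G/U]))^\Delta.\]

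Given $h\in \mathrm{Det}(K_1(\La_D(G)))^\Delta,$ write $h=(h_U)_U$ under this identification. Taylor's theorem applied to the finite group $G/U$ and the unramified extension $\O/\zp$ gives
\[\mathrm{Det}(K_1(D[G/U]))^\Delta \;=\; \mathrm{Det}(K_1(\O[G/U])),\]
so each $h_U$ lies in $\mathrm{Det}(K_1(\O[G/U])).$ Compatibility of the $h_U$ under the transition maps for $U'\subseteq U$ then shows that $(h_U)$ defines an element of $\varprojlim_U \mathrm{Det}(K_1(\O[G/U])) = \mathrm{Det}(K_1(\La_\O(G))),$ which is the required preimage of $h.$ The main obstacle is to justify the compatibility of the Ritter-Weiss Hom-description with this passage to the inverse limit at finite level, i.e.\ to verify that the Det-image of $K_1(\La_D(G))$ really coincides with the inverse limit of the finite-level Det-images rather than merely embedding into it; once this compatibility (together with the analogous statement for $\La_\O(G)$) is established, the descent reduces cleanly to the classical finite-group result.
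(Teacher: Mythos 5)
Your overall strategy --- reduce to finite quotients $G/U$, invoke Taylor's theorem there, and pass to the inverse limit --- is exactly the route the paper takes, and the easy inclusion is handled the same way. But the step you yourself flag as ``the main obstacle'' is precisely the content of the paper's argument, and leaving it open leaves the proof incomplete. Concretely, you need the equality $\varprojlim_U \mathrm{Det}(K_1(\O[G/U])) = \mathrm{Det}(K_1(\La_\O(G)))$, i.e.\ surjectivity of $\mathrm{Det}$ from $K_1(\La_\O(G))$ onto the inverse limit of the finite-level images; a compatible system $(h_U)_U$ with each $h_U$ in the finite-level image need not a priori come from a single global element. The paper supplies this by a compactness argument: $\La_\O(G)^\times = \varprojlim_n (\O/\pi^n[G_n])^\times$ is compact, $\mathrm{Det}$ is continuous into the Hausdorff space $\varprojlim_n \Hom_H(R(G_n),\O_{\mathbb{C}_p}^\times)$, so its image is closed and, surjecting onto each finite level by Taylor, must equal the full inverse limit. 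Note that this hard direction is only needed for the compact $\O$-coefficient side; on the $D$-side the paper uses only the trivial inclusion $\mathrm{Det}(K_1(\La_D(G)))\subseteq \varprojlim_n \mathrm{Det}(K_1(\La_D(G_n)))$, so you should not try to prove the full identification for $\La_D(G)$ (which is not compact) --- restructure your argument so that only the inclusion is used there.

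A second, smaller gap: Taylor's theorem in \cite{taylor84} is stated for $D$ the valuation ring of a \emph{finite} unramified extension of $L$, whereas here $D=\O_{\tilde L}$ with $\tilde L$ the \emph{completion of the maximal} unramified extension. So even the finite-group input you want to quote is not literally Taylor's statement; the paper points out that his proof generalizes to this larger $D$ and refers to \cite{iz-ven} for the details. You should at least acknowledge that this extension of the finite-level result is needed.
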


\begin{proof}
In Taylor's theorem (loc.\ cit.) $G$ is finite and $D$ is the
valuation ring of a finite unramified extension of $L.$ His proof
generalizes immediately to the case of our more general $D,$ see
\cite{iz-ven} for details, thus we only have to show how the general
case can be reduced to the case of finite groups. To this end write
$G=\projlim{n} G_n$ as inverse limit of finite groups. By Taylor's
result we have compatible continuous maps
\[\xymatrix{
  {\O[G_n]^\times }\ar@{->>}[r]^{\mathrm{Det}\phantom{mmmm}} & {\mathrm{Det}(K_1(\La_D(G_n)))^{\Delta}\phantom{m} }\ar@{^(->}[r]  & {\Hom_H(R(G_n),\O_{\mathbb{C}_p}^\times)^{\Delta}}  }\]
  where the topology on $\Hom_H(R(G_n),\O_{\mathbb{C}_p}^\times)\cong \prod_{\rho
  \in \mathrm{Irr}G_n/H} (\O_{\mathbb{C}_p}^\times)^{H_\rho}$ is induced from the valuation topology on
  $\Co_p.$
  Taking the inverse limit yields, by the compactness of $\La_\O(G)^\times=\projlim{n}(
  \O/\pi^n[G_n])^\times$  and by letting $R(G):=\dirlim{n} R(G_n)$ denote the free abelian group on the isomorphism classes of irreducible Artin representations of $G,$ a factorization of the homomorphism Det into
\[\xymatrix{
  {\La_\O(G)^\times }\ar@{->>}[r]^{\mathrm{Det}\phantom{mmmmm}} & {(\projlim{n} \mathrm{Det}(K_1(\La_D(G_n))))^{\Delta}\phantom{m} }\ar@{^(->}[r]  & {\Hom_{G_L}(R(G),\O_{\mathbb{C}_p}^\times).}  }\]
  The claim follows because   denoting by $res_n:\Hom_H(R(G),\O_{\mathbb{C}_p}^\times)\to \Hom_H(R(G_n),\O_{\mathbb{C}_p}^\times) $ the
  restriction we obtain from the universal mapping property for
  \[\projlim{n}\Hom_H(R(G_n),\O_{\mathbb{C}_p}^\times)\cong
  \Hom_H(R(G),\O_{\mathbb{C}_p}^\times)\] the inclusions
\begin{eqnarray*}
\mathrm{Det}(K_1(\La_D(G)))&\subseteq & \projlim{n} \mathrm{im}(res_n\circ \mathrm{Det}) \\
 &\subseteq &\projlim{n}   \mathrm{Det}(K_1(\La_D(G_n))),
\end{eqnarray*}
whence the obvious inclusion
\[\mathrm{Det}(K_1(\La_\O(G)))\subseteq \mathrm{Det}(K_1(\La_D(G)))^{\Delta}\subseteq (\projlim{n} \mathrm{Det}(K_1(\La_D(G_n))))^{\Delta}\]
is surjective.
\end{proof}

There are (at least) two  obvious questions: Firstly whether   this
descent result does also hold for the full $K_1$-groups ( this
amounts to an analogous statement for the $SK_1$-terms) and secondly
whether the analogue for the localisations holds, too. We shall at
least show a weak version towards the second issue. To this end we
give a variant of Ritter and Weiss' Hom-description: Consider the
following commutative diagram

\[\xymatrix{
   K_1(\Lambda_D(G) )\ar[d]_{\pi} \ar[r]^{\det\phantom{mmmmm}} & {\Hom}_{H,R_\Gamma}(R(G),\La_{\O_{\mathbb{C}_p}}(\Gamma)^\times) \ar@{^(->}[d]_{ } \ar@{^(->}[r]^{ev } & {\Hom_H}(R(G),\O_{\mathbb{C}_p}^\times) \ar@{^(->}[d]^{} \\
   K_1(\Lambda_D(G)_{S^*}) \ar[r]^{\det\phantom{mmmmm}} & {\Hom}_{H,R_\Gamma}(R(G),Q_{\O_{\mathbb{C}_p}}(\Gamma)^\times) \ar[r]^{ } & {\mathrm{Maps}}(\mathrm{Irr}G,\Co_p\cup\{\infty\}),   }\]

where
\begin{itemize}
  \item $Q_{\O_{\mathbb{C}_p}}(\Gamma)$ denotes the  quotient field of $\La_{\O_{\mathbb{C}_p}}(\Gamma),$
  \item for $A$ either  $\La_{\O_{\mathbb{C}_p}}(\Gamma)^\times$ or $Q_{\O_{\mathbb{C}_p}}(\Gamma)^\times$ we denote by ${\Hom}_{H,R_\Gamma}(R(G),A)$   the both $H$-invariant and
$R(\Gamma)$-twist-invariant homomorphisms.  The latter means that
$f(\rho\otimes \chi)=\mathrm{tw}_{\chi^{-1}}(f(\rho))$ for all
$\chi$ in $R(\Gamma);$ here $\chi$ is considered as element in
$R(G)$ via the fixed surjection $G\twoheadrightarrow\Gamma$ and
$\mathrm{tw}_\chi:A\to A$ is induced by the action $\gamma\mapsto
\chi(\gamma^{-1})\gamma$ on $ \La_{\O_{\mathbb{C}_p}}(\Gamma),$
compare with \cite[thm.\ 8]{ritter-weiss},

  \item   - analogously as for Det - the homomorphisms $\det$ are induced from the ring
homomorphisms

\[\La_D(G)\to M_{n_\rho}(\O_{\mathbb{C}_p})\widehat{\otimes}_D\La_D(\Gamma)\cong M_{n_\rho}(\La_{\O_{\mathbb{C}_p}}(\Gamma)),\; g\mapsto \rho(g)\otimes \bar{g},\]
and its localisation at $S^*$
\[\La_D(G)_{S^*}\to   M_{n_\rho}(\La_{\O_{\mathbb{C}_p}}(\Gamma))_{S^*}\cong M_{n_\rho}(Q_{\O_{\mathbb{C}_p}}(\Gamma)),\]
respectively.
\item the homomorphism $ev:{\Hom}_{H,R_\Gamma}(R(G),\La_{\O_{\mathbb{C}_p}}(\Gamma)^\times) \to
{\Hom_H}(R(G),\O_{\mathbb{C}_p}^\times),$ which is induced by the
augmentation map
$\La_{\O_{\mathbb{C}_p}}(\Gamma)^\times\to\O_{\mathbb{C}_p}^\times,$
i.e.\ $ev(f)(\rho)=f(\rho)(\chi_{triv}),$ where the latter means
evaluation at the trivial character $\chi_{triv}$ of $\Gamma,$ is
injective by the Weierstrass preparation theorem and the twist
invariance of $f$ (in the kernel of $ev$):
\[f(\rho)(\chi)=\mathrm{tw}_{\chi^{-1}}(f(\rho))(\chi_{triv})=f(\rho\otimes
\chi)(\chi_{triv})=1\] for all $\chi,$ whence $f(\rho)=1$ for all
$\rho,$
\item we do not know whether (need that) the map ${\Hom}_{H,R_\Gamma}(R(G),Q_{\O_{\mathbb{C}_p}}(\Gamma)^\times)   \to
{\mathrm{Maps}}(\mathrm{Irr}G,\Co_p\cup\{\infty\})$ is injective.
\end{itemize}

Note that

\begin{itemize}
  \item a similar diagram exists with $\O$-coefficients (instead of $D$) and $H$ replaced by $G_L$
  and that by construction  the morphisms $\det$ commute with the canonical change of coefficients
  maps.
  \item the composition of the top-line of the above diagram equals  Det.
  \item the image of $\L$ in $\mathrm{Maps}(\mathrm{Irr}G,\Co_p\cup\{\infty\})$ is the map which
  attaches to $\rho$ the value $\L(\rho)$ of $\L$ at $\rho$ and the map into this target is
  multiplicative at least in the following sense: if $f(\rho)\neq 0,\infty$ for all $\rho,$ then
  $(gf)(\rho) = g(\rho)f(\rho)$ for all $\rho$ (with $\infty\cdot a= a\cdot \infty=\infty$ and
  $0\cdot a=a\cdot 0=0$ for $a\neq 0,\infty).$
\end{itemize}


By $\iota$ we denote the canonical map $K_1(\Lambda_\O(G)_{S^*})\to
K_1(\Lambda_D(G)_{S^*}).$

\begin{thm}\label{L-descent}
Assume that
\begin{enumerate}
  \item $\O$ is absolutely unramified,
  \item $L(\rho)/L$ is totally ramified (or trivial) for all $\rho\in \mathrm{Irr}G,$
  \item $\mathcal{L}\in K_1(\Lambda_D(G)_{S^*})$ is
induced from an element in $\La_D(G)\cap
(\Lambda_D(G)_{S^*})^\times$
  \item $\mathcal{L}$ satisfies \[\L(\rho)\in
L(\rho) \mbox{ for all } \rho\in \mathrm{Irr}G,\]
\item there is an $F\in
K_1(\Lambda_\O(G)_{S^*})$ such that $\partial (\L\cdot
\iota(F)^{-1})=0$ (e.g.\ if $\L$ is the characteristic element of
the base change from a module in $\M_{\O,H}(G)$).
\end{enumerate}
     Then there exists
$\L'\in K_1(\Lambda_\O(G)_{S^*})$ with
\[\L'(\rho)=\L(\rho)\mbox{ for all } \rho\in \mathrm{IrrG}\] and \[\partial(\L')=\partial(F),\] in particular \[\iota\partial(\L')=\partial(\L).\]
\end{thm}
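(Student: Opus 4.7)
The plan is to use assumption (v) to reduce the statement to the unlocalized Galois descent theorem established above, and then recover $\mathcal{L}'$ by multiplying back with $F$. First, I would set $\tilde{\mathcal{L}} := \mathcal{L}\cdot \iota(F)^{-1} \in K_1(\Lambda_D(G)_{S^*})$. By (v) we have $\partial \tilde{\mathcal{L}} = 0$, so by exactness of the localization sequence, $\tilde{\mathcal{L}}$ is the image of some $\tilde{\mathcal{L}}_0 \in K_1(\Lambda_D(G))$. Via the map $\mathrm{Det}$ of the previous theorem this yields an element of $\mathrm{Hom}_H(R(G),\mathcal{O}_{\mathbb{C}_p}^\times)$.

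The heart of the argument is to show $\mathrm{Det}(\tilde{\mathcal{L}}_0)$ actually lies in $\mathrm{Hom}_{G_L}(R(G),\mathcal{O}_{\mathbb{C}_p}^\times)$, i.e.\ is $\Delta$-invariant. Concretely, this means verifying $\tilde{\mathcal{L}}_0(\rho) \in \mathcal{O}(\rho)^\times$ for every $\rho \in \mathrm{Irr}\,G$. For this I would exploit the formula $\tilde{\mathcal{L}}_0(\rho) = \mathcal{L}(\rho)F(\rho)^{-1}$: by assumption (iv), $\mathcal{L}(\rho) \in L(\rho)$, and by naturality of evaluation under the coefficient inclusion $\Lambda_{\mathcal{O}}(G)_{S^*} \hookrightarrow \Lambda_D(G)_{S^*}$, one has $F(\rho) \in L(\rho) \cup \{0,\infty\}$. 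Since $\tilde{\mathcal{L}}_0 \in K_1(\Lambda_D(G))$ produces values in $\mathcal{O}_{\mathbb{C}_p}^\times$ (finite units), $F(\rho)$ is forced into $L(\rho)^\times$ and $\tilde{\mathcal{L}}_0(\rho) \in L(\rho) \cap D$; by assumptions (i) and (ii) this intersection equals $\mathcal{O}(\rho)$, exactly the identification used in Lemma \ref{fix}.

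With $\Delta$-invariance in hand, the unlocalized descent theorem provides $\tilde{\mathcal{L}}_0' \in K_1(\Lambda_{\mathcal{O}}(G))$ with $\mathrm{Det}(\tilde{\mathcal{L}}_0') = \mathrm{Det}(\tilde{\mathcal{L}}_0)$. I then set $\mathcal{L}' := \tilde{\mathcal{L}}_0' \cdot F \in K_1(\Lambda_{\mathcal{O}}(G)_{S^*})$ and verify the conclusions: $\partial \mathcal{L}' = \partial \tilde{\mathcal{L}}_0' + \partial F = \partial F$ (since $\tilde{\mathcal{L}}_0'$ lies in the kernel of $\partial$), and for each $\rho$ the values agree, $\mathcal{L}'(\rho) = \tilde{\mathcal{L}}_0'(\rho)F(\rho) = \tilde{\mathcal{L}}_0(\rho)F(\rho) = \tilde{\mathcal{L}}(\rho)F(\rho) = \mathcal{L}(\rho)$, as desired.

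The main obstacle is the $\Delta$-invariance step: it combines the Galois behavior of the evaluations $\mathcal{L}(\rho)$ (via (iv)) and $F(\rho)$ (via naturality of evaluation at $\rho$ for elements defined over $\mathcal{O}$) with the intersection identity $L(\rho) \cap D = \mathcal{O}(\rho)$, which is precisely what the combination of the unramified hypothesis (i) and the total ramification hypothesis (ii) provides.
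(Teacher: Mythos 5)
Your overall architecture is the same as the paper's: use (v) to lift $\L\cdot\iota(F)^{-1}$ to an element $\mathcal{D}\in K_1(\La_D(G))$, descend $\mathrm{Det}(\mathcal{D})$ through the Taylor-type theorem $\mathrm{Det}(K_1(\La_D(G)))^{\Delta}=\mathrm{Det}(K_1(\La_\O(G)))$, and set $\L'=\pi_\O(F)\mathcal{D}'$. However, the crucial step — verifying that $\mathrm{Det}(\mathcal{D})$ is $\Delta$-invariant, i.e.\ that $\mathcal{D}(\rho)\in\O(\rho)^\times$ for all $\rho$ — has a genuine gap. You argue $\mathcal{D}(\rho)=\L(\rho)F(\rho)^{-1}\in L(\rho)$, but the evaluation map on $K_1(\La_D(G)_{S^*})$ lands in $\Co_p\cup\{\infty\}$ and is only multiplicative when the values involved are neither $0$ nor $\infty$. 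Since $\mathcal{D}(\rho)$ is a unit in $\O_{\mathbb{C}_p}$, the identity $\L(\rho)=\mathcal{D}(\rho)F(\rho)$ forces $F(\rho)=0$ (resp.\ $\infty$) exactly when $\L(\rho)=0$ (resp.\ $\infty$), and for such $\rho$ the quotient $\L(\rho)F(\rho)^{-1}$ is $0/0$ or $\infty/\infty$ and tells you nothing about the field of definition of $\mathcal{D}(\rho)$. These are not exceptional cases one may ignore: special values of $p$-adic $L$-functions do vanish, and the whole point of working in $K_1$ of the localisation is to allow this.

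This is precisely why the paper routes the argument through the refined determinant $\det$ with values in $\La_{\O_{\mathbb{C}_p}}(\Gamma)^\times$ resp.\ $Q_{\O_{\mathbb{C}_p}}(\Gamma)^\times$ rather than through single evaluations: by twist-invariance, $\det(\L)(\rho)$ records the whole family $\{\L(\rho\otimes\chi)\}_{\chi\in R(\Gamma)}$, and hypothesis (iii) — which your argument never uses, a warning sign — guarantees that $\det(\L)(\rho)$ is an honest power series in $D(\rho)\kl\Gamma\kr$, so that Lemma \ref{fix} applies and yields $\det(\L)(\rho)\in\O(\rho)\kl\Gamma\kr$. Combined with $\det(F)(\rho)\in Q_{\O(\rho)}(\Gamma)^\times$, one gets $\det(\mathcal{D})(\rho)\in Q_{\O(\rho)}(\Gamma)^\times\cap\La_{D(\rho)}(\Gamma)^\times=\La_{\O(\rho)}(\Gamma)^\times$ (Lemma \ref{field-def}), and only then does evaluation at $\chi_{triv}$ give $\mathcal{D}(\rho)\in\O(\rho)^\times$. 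In effect, the zeros/poles at a particular $\rho$ are controlled by moving along the $\Gamma$-line of twists, where all but finitely many evaluations are finite and nonzero; your argument, working with the single value at $\rho$, cannot see this. (A minor further slip: $L(\rho)\cap D=\O_L$, not $\O(\rho)$, when $L(\rho)/L$ is totally ramified and nontrivial — harmless here since $\O_L\subseteq\O(\rho)$, but it shows the intersection you actually need is $L(\rho)\cap D(\rho)=\O(\rho)$ with $D(\rho)=(\O_{\mathbb{C}_p})^{H_\rho}$.)
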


\begin{proof}
Consider the following commutative diagram with exact rows
\[\xymatrix{
  K_1(\La_D(G))   \ar[r]^{\pi_D} & K_1(\Lambda_D(G)_{S^*})   \ar[r]^{\partial_D} & K_0(\M_{D,H}(G))   \ar[r]^{ } & K_0(\La_D(G))   \\
  K_1(\La_\O(G)) \ar[r]^{\pi_\O } \ar[u]^{\iota}& K_1(\Lambda_\O(G)_{S^*})  \ar[r]^{\partial_\O} \ar[u]^{\iota}& K_0(\M_{\O,H}(G))\ar[r]^{ } \ar[u]^{\iota}& 0.   }\]

By assumption there exists $\mathcal{D}\in K_1(\La_D(G))$ such that
$\pi_D(\mathcal{D})=\L\cdot \iota(F)^{-1}.$ By Lemma \ref{field-def}
below $\mathrm{Det}(\mathcal{D})$ belongs to
$\mathrm{Det}(K_1(\La_D(G)))^{\Delta}=\mathrm{Det}(K_1(\La_\O(G))),$
i.e.\ there is  a $\mathcal{D}'\in K_1(\La_\O(G))$ such that
$\mathcal{D}(\rho)=\mathcal{D}'(\rho)$ for all $\rho$ in $R(G).$
Setting $\L':=\pi_\O(F)\mathcal{D}'$ and recalling that
$\mathcal{D}'(\rho)= \mathcal{D}(\rho)\neq 0,\infty$ we calculate
\begin{eqnarray*}
{\L}'(\rho)&=& F(\rho) \mathcal{D}'(\rho) \\
         &=& F(\rho) \mathcal{D}(\rho) \\
         &=& \det(F)(\rho)(\chi_{triv})\cdot \det(\mathcal{D})(\rho)(\chi_{triv})\\
         &=& \Big(\det(F)(\rho)\det(\mathcal{D})(\rho)\Big)(\chi_{triv})\\
         &=& \det\big(\iota(F)\pi(\mathcal{D})\big)(\rho)(\chi_{triv})\\
         &=& \det(\L)(\rho)(\chi_{triv})\\
         &=& {\L} (\rho),
\end{eqnarray*}
whence the theorem is proven.
\end{proof}

\begin{lem}\label{field-def}
With notation as in the previous proof we have
\begin{enumerate}
   \item $\det(\mathcal{D})(\rho)\in \O(\rho)[[\Gamma]]$ for all $\rho\in R(G),$
   \item $\mathcal{D}(\rho)=\mathrm{Det}(\mathcal{D})(\rho)\in \O(\rho)$ for all $\rho\in R(G),$
   \item $\mathrm{Det}(\mathcal{D})\in \mathrm{Det}( K_1(\Lambda_D(G) )  )^\Delta.$
 \end{enumerate}
\end{lem}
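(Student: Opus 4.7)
My plan is to establish (i) first, deduce (ii) by specialising at the trivial character of $\Gamma,$ and then obtain (iii) from (ii) combined with the built-in equivariance of $\mathrm{Det}.$

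For (i), assumption (iii) of Theorem \ref{L-descent} provides a lift $\tilde{\mathcal{L}}\in\Lambda_D(G)\cap(\Lambda_D(G)_{S^*})^\times$ of $\mathcal{L},$ so that $\det(\mathcal{L})(\rho)=\det(\tilde{\mathcal{L}})(\rho)$ actually lies in $\Lambda_{D(\rho)}(\Gamma),$ not merely in $Q_{D(\rho)}(\Gamma)^\times.$ By the twist-invariance built into $\det$ one has $\det(\mathcal{L})(\rho)(\chi)=\mathcal{L}(\rho\otimes\chi)$ for every finite-order character $\chi$ of $\Gamma;$ since $\rho\otimes\chi$ is again an irreducible Artin representation of $G,$ assumption (iv) yields $\mathcal{L}(\rho\otimes\chi)\in L(\rho\otimes\chi)\subseteq L(\rho)(\chi).$ This is exactly the hypothesis needed to apply Lemma \ref{fix} to $\Gamma\cong\mathbb{Z}_p$ with base rings $(\mathcal{O}(\rho),D(\rho),L(\rho),\widehat{L^{nr}(\rho)})$ in place of $(\mathcal{O},D,L,\tilde{L});$ the total-ramification requirement internal to the proof of Lemma \ref{fix} reduces here to $L(\rho)(\chi)/L(\rho)$ being totally ramified, which holds because cyclotomic extensions by $p$-power roots of unity are totally ramified over any finite extension of $\mathbb{Q}_p.$ The lemma therefore gives $\det(\mathcal{L})(\rho)\in\Lambda_{\mathcal{O}(\rho)}(\Gamma).$ On the $F$-side, since $F\in K_1(\Lambda_{\mathcal{O}}(G)_{S^*})$ and $\rho$ factors through $\mathrm{GL}_{n_\rho}(\mathcal{O}(\rho)),$ the element $\det(\iota F)(\rho)$ lies in $Q_{\mathcal{O}(\rho)}(\Gamma)^\times\subseteq Q_{D(\rho)}(\Gamma)^\times.$ Writing $\det(\mathcal{D})(\rho)=\det(\mathcal{L})(\rho)\cdot\det(\iota F)(\rho)^{-1}$ places this element in $Q_{\mathcal{O}(\rho)}(\Gamma)^\times,$ while $\mathcal{D}\in K_1(\Lambda_D(G))$ also puts it in $\Lambda_{D(\rho)}(\Gamma)^\times.$ The ring extension $\Lambda_{\mathcal{O}(\rho)}(\Gamma)\to\Lambda_{D(\rho)}(\Gamma)$ is faithfully flat (inherited from the faithfully flat local extension $\mathcal{O}(\rho)\to D(\rho)$), and so
\[
\Lambda_{D(\rho)}(\Gamma)^\times\cap Q_{\mathcal{O}(\rho)}(\Gamma)=\Lambda_{\mathcal{O}(\rho)}(\Gamma)^\times,
\]
whence $\det(\mathcal{D})(\rho)\in\Lambda_{\mathcal{O}(\rho)}(\Gamma)^\times,$ proving (i).

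Assertion (ii) follows immediately by evaluating at the trivial character $\chi_{\mathrm{triv}}$ of $\Gamma:$ $\mathcal{D}(\rho)=\det(\mathcal{D})(\rho)(\chi_{\mathrm{triv}})\in\mathcal{O}(\rho)^\times\subseteq\mathcal{O}(\rho).$ For (iii) I exploit the identification $\mathrm{Hom}_H(R(G),\mathcal{O}_{\mathbb{C}_p}^\times)\cong\prod_{[\rho]\in\mathrm{Irr}G/H}D(\rho)^\times$ recalled in the proof of Lemma \ref{fix}: because assumption (ii) of Theorem \ref{L-descent} forces the $H$-orbits and $G_L$-orbits of $\mathrm{Irr}G$ to coincide, the action of $\Delta=G_L/H$ on the right-hand product is the coefficient-wise action of $\Delta_\rho=\mathrm{Gal}(L^{nr}(\rho)/L(\rho))\cong\Delta$ on each factor $D(\rho)^\times,$ whose fixed subgroup is $\mathcal{O}(\rho)^\times.$ Assertion (ii) says precisely that $\mathrm{Det}(\mathcal{D})$ takes its values componentwise in $\mathcal{O}(\rho)^\times,$ hence is $\Delta$-invariant, so $\mathrm{Det}(\mathcal{D})\in\mathrm{Det}(K_1(\Lambda_D(G)))^\Delta.$

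The hard step is (i): producing the integral power-series lift $\det(\tilde{\mathcal{L}})(\rho)$ via assumption (iii) of Theorem \ref{L-descent} and carefully checking that the hypotheses of Lemma \ref{fix} apply in the shifted base-ring setting $(\mathcal{O}(\rho),D(\rho),L(\rho));$ the concluding faithful-flatness descent at the end of (i) is a secondary, more routine, technical point.
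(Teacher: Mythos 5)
Your argument follows essentially the same route as the paper's: Lemma \ref{fix} applied to $\det(\L)(\rho)$ over the shifted base $\O(\rho)$, then the intersection
$Q_{\O(\rho)}(\Gamma)^\times\cap\La_{D(\rho)}(\Gamma)^\times=\La_{\O(\rho)}(\Gamma)^\times$
to handle $\det(\mathcal{D})(\rho)=\det(\L)(\rho)\det(F)(\rho)^{-1}$, evaluation at $\chi_{\mathrm{triv}}$ for (ii), and the identification $\Hom_H(R(G),\O_{\mathbb{C}_p}^\times)^\Delta=\prod_{\rho}\O(\rho)^\times$ (using $(\mathrm{Irr}G)/H=(\mathrm{Irr}G)/G_L$) for (iii). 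The one sub-step you treat genuinely differently is the intersection claim: the paper proves it by a \emph{second} application of Lemma \ref{fix}, evaluating $q$ at all characters $\chi$ of $\Gamma$ and observing $q(\chi)\in\O(\rho,\chi)$, whereas you invoke faithfully flat descent, i.e.\ $B\cap\mathrm{Frac}(A)=A$ for $A=\La_{\O(\rho)}(\Gamma)\hookrightarrow B=\La_{D(\rho)}(\Gamma)$. That is correct and arguably cleaner, since it avoids re-checking the hypotheses of Lemma \ref{fix}; but note that flatness of $\O(\rho)[[T]]\to D(\rho)[[T]]$ is not literally ``inherited'' from $\O(\rho)\to D(\rho)$ (the target is a completed, not an ordinary, tensor product); it does hold, e.g.\ by the local criterion for flatness applied to a uniformizer of $\O(\rho)$.

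One caveat on your verification of the hypotheses of Lemma \ref{fix}: the assertion that $p$-power cyclotomic extensions are totally ramified over \emph{any} finite extension of $\qp$ is false in general (for instance $\qp(p^{1/(p-1)},\zeta_p)/\qp(p^{1/(p-1)})$ acquires an unramified part). What is actually needed is that $L(\rho)(\chi)/L(\rho)$ be totally ramified for all characters $\chi$ of $\Gamma$; this holds in the situations where the theorem is applied (there $L/\qp$ is unramified and $L(\rho)\subseteq L(\mu_{(p-1)p^\infty})$), and the paper's own proof uses it equally implicitly, so this is not a gap relative to the paper --- but the general claim as you state it should be corrected or replaced by the hypothesis actually available.
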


\begin{proof}
By construction and assumption ($\L$ being induced from ....) we
have $\det(\L)(\rho)\in D(\rho)[[\Gamma]]$ with
$D(\rho):=(\O_{\mathbb{C}_p})^{H_\rho}$ and thus by Lemma \ref{fix}
$\det(\L)(\rho) $ belongs to $\O(\rho) [[\Gamma]].$ On the other
hand $\det(F)(\rho)$ belongs to $Q_{\O(\rho)}(\Gamma)^\times,$
whence \[\det(\mathcal{D})(\rho) \in Q_{\O(\rho)}(\Gamma)^\times\cap
\La_{D(\rho)}(\Gamma)^\times.\] We claim that this intersection
equals $\La_{\O(\rho)}(\Gamma)^\times.$ One inclusion being obvious
we assume that $q$ belongs to the intersection. It follows
immediately that $q(\chi)$ belongs to
$\O(\rho,\chi):=(\O_{\mathbb{C}_p})^{G_{L,\rho}\cap G_{L,\chi}}$ for
all $\chi\in R(\Gamma).$ Thus the claim follows Lemma \ref{fix}
(with $\O(\rho)$  for the base ring $\O$). This proves (i) and as
$\mathcal{D}(\rho)=\det(\mathcal{D})(\rho)(\chi_{triv})$ also (ii)
follows.  In order to show (iii) observe first that $\Delta$ acts
trivially on $\mathrm{Irr}G/G_{\tilde{L}}=\mathrm{Irr}G/G_{{L}}. $
Hence from the following commutative diagram the statement is clear:

\xymatrix{
   {\Hom_H(R(G),\O_{\mathbb{C}_p}^\times)} \ar@{=}[rr]& & {\prod_{\rho\in (\mathrm{Irr}G)/G_L}D(\rho)^\times }  \\
{\Hom_H(R(G),\O_{\mathbb{C}_p}^\times)^\Delta}\ar@{=}[r]\ar@{^(->}[u]
& {\Hom_{G_L}(R(G),\O_{\mathbb{C}_p}^\times)} \ar@{=}[r] &
{\prod_{\rho\in (\mathrm{Irr}G)/G_L}\O(\rho)^\times.}  \ar@{^(->}[u]
 }

\end{proof}

%

\def\Dbar{\leavevmode\lower.6ex\hbox to 0pt{\hskip-.23ex \accent"16\hss}D}
  \def\cfac#1{\ifmmode\setbox7\hbox{$\accent"5E#1$}\else
  \setbox7\hbox{\accent"5E#1}\penalty 10000\relax\fi\raise 1\ht7
  \hbox{\lower1.15ex\hbox to 1\wd7{\hss\accent"13\hss}}\penalty 10000
  \hskip-1\wd7\penalty 10000\box7}
  \def\cftil#1{\ifmmode\setbox7\hbox{$\accent"5E#1$}\else
  \setbox7\hbox{\accent"5E#1}\penalty 10000\relax\fi\raise 1\ht7
  \hbox{\lower1.15ex\hbox to 1\wd7{\hss\accent"7E\hss}}\penalty 10000
  \hskip-1\wd7\penalty 10000\box7} \def\Dbar{\leavevmode\lower.6ex\hbox to
  0pt{\hskip-.23ex \accent"16\hss}D}
  \def\cfac#1{\ifmmode\setbox7\hbox{$\accent"5E#1$}\else
  \setbox7\hbox{\accent"5E#1}\penalty 10000\relax\fi\raise 1\ht7
  \hbox{\lower1.15ex\hbox to 1\wd7{\hss\accent"13\hss}}\penalty 10000
  \hskip-1\wd7\penalty 10000\box7}
  \def\cftil#1{\ifmmode\setbox7\hbox{$\accent"5E#1$}\else
  \setbox7\hbox{\accent"5E#1}\penalty 10000\relax\fi\raise 1\ht7
  \hbox{\lower1.15ex\hbox to 1\wd7{\hss\accent"7E\hss}}\penalty 10000
  \hskip-1\wd7\penalty 10000\box7}
\providecommand{\bysame}{\leavevmode\hbox
to3em{\hrulefill}\thinspace}
\providecommand{\MR}{\relax\ifhmode\unskip\space\fi MR }
\providecommand{\MRhref}[2]{%
  \href{http://www.ams.org/mathscinet-getitem?mr=#1}{#2}
} \providecommand{\href}[2]{#2}

\end{document}